\documentclass{article}
\usepackage[utf8]{inputenc}

\usepackage{amsmath}
\usepackage{amssymb}
\usepackage{amsfonts}
\usepackage{amsrefs}
\usepackage{amsthm}
\usepackage{mathtools}
\usepackage{multirow}
\usepackage{enumitem}
\usepackage{tikz-cd}
\usepackage{soul}
\usepackage{mathrsfs}
\usepackage{bbm}
\usepackage{float}
\usepackage{authblk}
\usepackage[labelfont=bf, labelsep=none]{caption}
\usepackage{abstract}


\newcommand{\g}{\mathfrak{g}}
\newcommand{\gaff}{\mathfrak{\hat{g}}}
\newcommand{\Uq}{U_{q}(\mathfrak{g})}
\newcommand{\Uaff}{U_{q}(\mathfrak{\hat{g}})}
\newcommand{\Utor}{U_{q}(\mathfrak{g}_{\mathrm{tor}})}
\newcommand{\UtorA}{U_{q}(\mathfrak{sl}_{n+1,\mathrm{tor}})}
\newcommand{\Uh}{\mathcal{U}_{h}}
\newcommand{\Uv}{\mathcal{U}_{v}}
\newcommand{\B}{\mathcal{B}}
\newcommand{\Bd}{\mathcal{\dot{B}}}
\newcommand{\Bdd}{\mathcal{\ddot{B}}}
\newcommand{\Bh}{\mathcal{B}_{h}}
\newcommand{\Bv}{\mathcal{B}_{v}}
\newcommand{\e}{\mathfrak{e}}
\newcommand{\te}{\mathfrak{t}}
\newcommand{\xip}{x_{i}^{+}}
\newcommand{\xim}{x_{i}^{-}}
\newcommand{\xipm}{x_{i}^{\pm}}
\newcommand{\xjp}{x_{j}^{+}}
\newcommand{\xjm}{x_{j}^{-}}
\newcommand{\xjpm}{x_{j}^{\pm}}
\newcommand{\xp}{x^{+}}
\newcommand{\xm}{x^{-}}
\newcommand{\xpm}{x^{\pm}}
\newcommand{\Qov}{\mathring{Q}^{\vee}}
\newcommand{\Pov}{\mathring{P}^{\vee}}
\newcommand{\Imin}{I_{\mathrm{min}}}
\newcommand{\Scal}{\mathcal{S}}
\newcommand{\X}{\mathcal{X}}
\newcommand{\gh}{\gamma_{h}}
\newcommand{\gv}{\gamma_{v}}
\newcommand{\T}{\mathcal{T}}
\newcommand{\U}{\mathcal{U}}
\newcommand{\UaffA}{U_{q}(A_{1}^{(1)})}
\newcommand{\UaffAA}{U_{q}(A_{2}^{(1)})}
\newcommand{\UaffC}{U_{q}(C_{2}^{(1)})}
\newcommand{\UaffG}{U_{q}(G_{2}^{(1)})}
\newcommand{\A}{\mathcal{A}}
\newcommand{\Hcal}{\mathcal{H}}
\newcommand{\Z}{\mathcal{Z}}

\newcommand{\Tb}{\textbf{T}}
\newcommand{\Xb}{\textbf{X}}
\newcommand{\xbpm}{\mathbf{x}^{\pm}}
\newcommand{\xb}{\mathbf{x}}
\newcommand{\kb}{\mathbf{k}}
\newcommand{\Cb}{\mathbf{C}}
\newcommand{\kk}{\mathbbm{k}}
\newcommand{\s}{\mathfrak{s}}
\newcommand{\xhpm}{\hat{x}^{\pm}}
\newcommand{\xhmp}{\hat{x}^{\mp}}
\newcommand{\xhp}{\hat{x}^{+}}
\newcommand{\xhm}{\hat{x}^{-}}

\newcommand{\kh}{\hat{k}}
\newcommand{\Ch}{\hat{C}}

\numberwithin{equation}{section}

\usepackage{hyperref}
\hypersetup{
    pdftitle={Duncan Laurie - Automorphisms of quantum toroidal algebras from an action of the extended double affine braid group},
    pdfauthor={Duncan Laurie},
    hidelinks,
    pdfhighlight={/N},
    colorlinks=true,
    linkcolor=blue,
    citecolor=blue,
    urlcolor=blue,
    breaklinks=true,
    }
\urlstyle{}

\newcommand\blfootnote[1]{%
  \begingroup
  \renewcommand\thefootnote{}\footnote{#1}%
  \addtocounter{footnote}{-1}%
  \endgroup
}

\usepackage{titling}
\setlength{\droptitle}{-0.7in}

\begin{document}

\newtheoremstyle{thmstyleone}
  {\topsep}
  {\topsep}
  {\itshape}
  {0pt}
  {\bfseries}
  {}
  {.5em}
  {\thmname{#1}\hspace{.3em}\thmnumber{#2.}}

\newtheoremstyle{thmstyletwo}
  {\topsep}
  {\topsep}
  {\normalfont}
  {0pt}
  {\itshape}
  {}
  {.5em}
  {\thmname{#1}\hspace{.3em}\thmnumber{#2.}}

\newtheoremstyle{thmstylethree}
  {\topsep}
  {\topsep}
  {\normalfont}
  {0pt}
  {\bfseries}
  {}
  {.5em}
  {\thmname{#1}\hspace{.3em}\thmnumber{#2.}}

\theoremstyle{thmstyleone}
\newtheorem{thm}{Theorem}[section]
\newtheorem{prop}[thm]{Proposition}
\newtheorem{lem}[thm]{Lemma}
\newtheorem{cor}[thm]{Corollary}

\theoremstyle{thmstyletwo}
\newtheorem{eg}[thm]{Example}
\newtheorem{rmk}[thm]{Remark}

\theoremstyle{thmstylethree}
\newtheorem{defn}[thm]{Definition}

\raggedbottom

\title{Automorphisms of quantum toroidal algebras from an action of the extended double affine braid group}
\author{Duncan Laurie}
\affil{\normalsize{Mathematical Institute, University of Oxford, Andrew Wiles Building,} \\ \normalsize{Woodstock Road, Oxford, OX2 6GG, United Kingdom.}}
\date{}

\maketitle\blfootnote{E-mail: \url{duncan.laurie@maths.ox.ac.uk}}\blfootnote{ORCID: 0009-0006-9331-4835.}\blfootnote{2020 \emph{Mathematics subject classification}: 17B37, 17B67, 20F36, 81R50.}\blfootnote{Key words and phrases: quantum toroidal algebra, quantum affine algebra, quantum affinization, extended double affine braid group, braid group action.}

\vspace{-40pt}

\begin{abstract}
    We first construct an action of the extended double affine braid group $\Bdd$ on the quantum toroidal algebra $\Utor$ in untwisted and twisted types.
    As a crucial step in the proof, we obtain a finite Drinfeld new style presentation for a broad class of quantum affinizations.
    In the simply laced cases, using our action and certain involutions of $\Bdd$ we produce automorphisms and anti-involutions of $\Utor$ which exchange the horizontal and vertical subalgebras.
    Moreover, they switch the central elements $C$ and $k_{0}^{a_{0}}\dots k_{n}^{a_{n}}$ up to inverse.
    This can be viewed as the analogue, for these quantum toroidal algebras, of the duality for double affine braid groups used by Cherednik to realise the difference Fourier transform in his celebrated proof of the Macdonald evaluation conjectures.
    Our work generalises existing results in type $A$ due to Miki which have been instrumental in the study of the structure and representation theory of $U_{q}(\mathfrak{sl}_{n+1,\mathrm{tor}})$.
\end{abstract}

\tableofcontents

\section{Introduction}

Quantum affine algebras were originally introduced by Drinfeld and Jimbo as the quantum groups $\Uaff$ associated to affine Kac-Moody algebras.
Subsequently, Drinfeld \cite{Drinfeld88} provided an alternative realization of $\Uaff$ in the untwisted case as a quantum affinization of the corresponding finite quantum group, as well as a similar realization for twisted types.
Proofs of the equivalence of the two presentations were then published in work by Beck \cite{Beck94}, Jing and Zhang \cites{Jing98,JZ07,JZ10} and Damiani \cites{Damiani12,Damiani15}.
This new presentation, known as the `Drinfeld new realization', has played a crucial role in studying the rich representation theory of quantum affine algebras.
For example, Chari and Pressley \cite{CP95} classified the finite dimensional representations in terms of Drinfeld polynomials, and Frenkel and Jing \cites{FJ88,Jing90} constructed vertex representations. \\

Drinfeld's quantum affinization resembles the formation of untwisted affine Lie algebras by adjoining a derivation to a central extension of the loop algebra of a finite dimensional simple Lie algebra.
This procedure can more generally be applied to any Kac-Moody algebra, and takes affine Kac-Moody algebras to `double affine' or toroidal Lie algebras.
Similarly, the quantum affinization process works for the quantum group of any Kac-Moody algebra.
In particular, from quantum affine algebras we obtain the quantum toroidal algebras $\Utor$, which were first introduced in \cites{GKV95,VV96} for type $A_{n}^{(1)}$ and then for arbitrary symmetric type in \cites{Jing98(2),Nak01}.
Furthermore, Nakajima \cite{Nak01} constructed representations of simply laced quantum affinizations geometrically, using the equivariant K-theory of quiver varieties. For the specific case of simply laced quantum toroidal algebras, see also \cites{Nak02,VV99}. \\

Just as in the quantum affine setting, representations of $\Utor$ are equipped with a level determined by the action of the central elements.
It is known that $\Utor$ contains horizontal and vertical subalgebras $\Uh$ and $\Uv$, each isomorphic to a quantum affine algebra.
In particular, $\Uh$ is the natural copy of $\Uaff$ from which $\Utor$ is formed via quantum affinization, and $\Uv$ is the quantum affinization of the finite quantum group lying inside it.
Then a representation of $\Utor$ is said to have level $(a,b)$ if $\Uv$ acts with level $a$ and $\Uh$ with level $b$. \\

In type $A_{n}^{(1)}$, Varagnolo and Vasserot \cite{VV96} established a Schur-Weyl duality between representations of $\UtorA$ and those of the double affine Hecke algebra.
This duality was then used to construct a level $(0,1)$ action on the $q$-Fock space \cites{STU98,VV98}.
Note that \cite{STU98} also proves the irreducibility of the representation.
Nagao \cite{Nag09b} showed that this is isomorphic to Nakajima's geometric representation in type $A_{n}^{(1)}$ -- torus fixed points on the equivariant K-theory side are identified with certain simultaneous eigenvectors in the $q$-Fock space, defined using non-symmetric Macdonald polynomials. \\

There is also a level $(1,0)$ vertex representation of $\UtorA$ due to Saito \cite{Saito98} (for arbitrary symmetric types see \cite{Jing98(2)}).
Motivated by trying to understand the relationship with the $q$-Fock space representation, Miki \cite{Miki99} constructed an automorphism of $\UtorA$ using a toroidal braid group action.
In particular, his automorphism exchanges the horizontal and vertical subalgebras and swaps their central elements up to inverse.
Miki \cite{Miki00} then used this automorphism to study the representation theory of $\UtorA$, obtaining among other things a classification by Drinfeld polynomials of the irreducibles in a natural class of highest weight representations, and $R$-matrices on tensor products of these modules.
He also clarified the relation between the vertex and $q$-Fock space representations. \\

Surprisingly, relatively little has been written about quantum toroidal algebras outside of type $A_{n}^{(1)}$.
The primary aim of this paper is to generalise the results of \cite{Miki99} to other types.
In particular, we first construct an action of the extended double affine braid group $\Bdd$ on the quantum toroidal algebra $\Utor$ in all untwisted and twisted types other than $A_{1}^{(1)}$ and $A_{2}^{(2)}$.
Then in the simply laced case we use this action to obtain automorphisms and anti-involutions of $\Utor$ which exchange the horizontal and vertical subalgebras.
\\

We expect that -- as in type $A_{n}^{(1)}$ -- this will be helpful for studying the representation theory of $\Utor$, and we plan to explore these directions in future work.
For example, Hernandez \cite{Hernandez05} obtained a Chari-Pressley style classification of the (type $1$) irreducible integrable loop-highest weight modules in terms of Drinfeld polynomials.
Conjugating the Drinfeld topological coproduct of $\Utor$ by our automorphism should produce a tensor product on representations that is well-defined for these modules.
Furthermore, twisting the vertex representation of $\Utor$ from \cite{Jing98} by our automorphism should land within this classification, and moreover relate to Nakajima's geometric representation in all simply laced $ADE$ types.
The author also plans to utilise our action of $\Bdd$ to obtain further analogues of various braid group phenomena inside the quantum algebra setting. \\

This paper is organised as follows.
Section \ref{Preliminaries} establishes our notational conventions surrounding the basic structures used in the theory of affine Kac-Moody algebras.
In Section \ref{Affine Section} we recall the affine situation in more detail, in particular quantum affine algebras; quantum affinization; Jing's isomorphism \cite{Jing98} between the two presentations of untwisted $\Uaff$; and the action of the extended affine braid group due to Lusztig \cite{Lusztig93} and Beck \cite{Beck94}. \\

In Section \ref{The Toroidal Situation} we move to the toroidal setting.
We define the quantum toroidal algebra, proving that it is generated by its horizontal and vertical subalgebras, and present some natural (anti-)automorphisms.
Section \ref{Extended double affine braid group section} introduces the extended double affine braid group $\Bdd$ together with its horizontal and vertical subgroups $\Bh$ and $\Bv$, each of which is isomorphic to an extended affine braid group.
For a broad class of quantum affinizations, we then prove a simplified Drinfeld new style presentation involving only finitely many generators and relations (Proposition~\ref{simpler toroidal presentation}).
In particular, this includes the quantum toroidal algebras $\Utor$ in all types other than $A_{1}^{(1)}$ and $A_{2}^{(2)}$, as well as the untwisted quantum affine algebras $\Uaff$.
This allows us to define automorphisms $\T_{0},\dots,\T_{n}$ of $\Utor$ which restrict to the braid automorphisms of Lusztig \cite{Lusztig93} on both the horizontal and vertical subalgebras (Proposition~\ref{Ti properties}).
We are then able to give an action of $\Bdd$ on $\Utor$ in all types (Theorem~\ref{toroidal braid group action theorem}).
The horizontal and vertical subgroups $\Bh$ and $\Bv$ restrict to the extended affine action of Lusztig and Beck on the horizontal and vertical subalgebras $\Uh$ and $\Uv$ respectively. \\

The braid group $\Bdd$ possesses a natural involution $\te$ which interchanges its horizontal and vertical subgroups.
In Section \ref{quantum toroidal automorphism section}, using the action on $\Utor$ we transfer this over to an anti-involution $\psi$ of the quantum toroidal algebra in all simply laced types (Theorem~\ref{psi theorem}).
Moreover, $\psi$ exchanges the horizontal and vertical subalgebras and acts on central elements by $C \leftrightarrow (k_{0}^{a_{0}}\dots k_{n}^{a_{n}})^{-1}$.
Composing $\psi$ with a standard anti-automorphism $\eta$, we get an automorphism $\Phi$ of $\Utor$ which in type $A_{n}^{(1)}$ recovers that of Miki\footnote{More specifically, Miki considers a quantum toroidal algebra $U_{q,\kappa}(\mathfrak{sl}_{n+1,\mathrm{tor}})$ involving an extra deformation parameter $\kappa$ which is not known to exist in other types.
Our automorphism $\Phi$ in type $A_{n}^{(1)}$ is equal to that of Miki with $\kappa$ set to $1$.}
\cite{Miki99} (Corollary~\ref{Phi corollary}).
We conclude by proving compatibility relations between $\psi$ and $\Phi^{\pm 1}$ and various involutions of $\Bdd$ (Proposition~\ref{compatibilities proposition}).
\\

\textbf{Acknowledgements.}
I would like to thank my supervisor, Kevin McGerty, for many helpful discussions throughout the preparation of this paper — his guidance and encouragement have been invaluable.
This research was financially supported by the Engineering and Physical Sciences Research Council [grant number EP/T517811/1].

\section{Preliminaries} \label{Preliminaries}

For our conventions on affine Kac-Moody algebras, we mostly follow \cite{Kac90}.
We shall consider an affine Kac-Moody algebra $\gaff$ with Cartan matrix $A = (a_{ij})_{i,j\in I}$ and index set $I = \lbrace 0,\dots,n\rbrace$.
It has a Cartan subalgebra $\hat{\mathfrak{h}}$ containing simple coroots $\alpha_{i}^{\vee}$ and fundamental coweights $\lambda_{i}^{\vee}$ for each $i\in I$, which form bases for the coroot and coweight lattices $Q^{\vee}$ and $P^{\vee}$.
The simple roots $\alpha_{i}$ and fundamental weights $\lambda_{i}$ for each $i\in I$ lie in the dual space $\hat{\mathfrak{h}}^{*}$ and span the root and weight lattices $Q$ and $P$.
As $\mathbb{Q}$-vector spaces, $\hat{\mathfrak{h}}$ and $\hat{\mathfrak{h}}^{*}$ have bases $\lbrace\lambda^{\vee}_{0},\alpha^{\vee}_{0},\dots,\alpha^{\vee}_{n}\rbrace$ and $\lbrace\lambda_{0},\alpha_{0},\dots,\alpha_{n}\rbrace$ respectively.
The affine Weyl group $W = \langle s_{i} : i\in I\rangle$ acts on $P^{\vee}$ via $s_{i}(x) = x - \langle\alpha_{i},x\rangle\alpha_{i}^{\vee}$ for each $i\in I$, where $\langle~,~\rangle$ is the natural pairing between $\hat{\mathfrak{h}}$ and $\hat{\mathfrak{h}}^{*}$. \\

Each node $i\in I$ of the affine Dynkin diagram $D(A)$ has a numerical label $a_{i}$, and a dual label $a_{i}^{\vee}$ coming from the diagram with the same vertex numbering and all arrows reversed.
The affine Dynkin diagrams, together with their $a_{i}$ and $a_{i}^{\vee}$ labels, are given in Appendix~\ref{Affine Dynkin Diagrams appendix} -- our choice of vertex numbering matches \cite{Kac90}*{Chapter 4}.
Note that $\delta = \sum_{i\in I} a_{i}\alpha_{i}$ is the standard non-divisible imaginary root in $Q$, and that outside type $A_{2n}^{(2)}$ we have $a_{0} = 1$. \\

The corresponding finite dimensional simple Lie algebra $\g$ has Cartan matrix $(a_{ij})_{i,j\in I_{0}}$ where $I_{0} = \lbrace 1,\dots,n\rbrace$.
It has simple roots $\alpha_{i}$, simple coroots $\alpha_{i}^{\vee}$, fundamental weights $\omega_{i}$, and fundamental coweights $\omega_{i}^{\vee}$ for each $i\in I_{0}$ and we denote its root, coroot, weight and coweight lattices by $\mathring{Q}$, $\Qov$, $\mathring{P}$ and $\Pov$.
By mapping each $\omega_{i}^{\vee} \mapsto a_{0}\lambda_{i}^{\vee} - a_{i}\lambda_{0}^{\vee}$ we can embed $\Pov$ inside $P^{\vee}$ at level $0$, so that $\langle\delta,\omega_{i}^{\vee}\rangle = 0$ for all $i\in I_{0}$.
The image is invariant under the action of the finite Weyl group $W_{0} = \langle s_{i} : i\in I_{0}\rangle$.
Similarly, we can view $\mathring{P}$ inside the affine weight lattice $P$ by sending each
$\omega_{i} \mapsto a_{0}^{\vee}\lambda_{i} - a_{i}^{\vee}\lambda_{0}$.
In order to simplify our notation in later sections we shall moreover define $\omega_{0}^{\vee} = 0$ and $\omega_{0} = 0$. \\

We denote by $\Omega$ the group of outer automorphisms of the affine Dynkin diagram, which is the quotient of the automorphism group of $D(A)$ by the subgroup which fixes the $0$ vertex (and thus restricts to automorphisms of the finite Dynkin diagram).
Elements of $\Omega$ are indexed by $\Imin \subset \lbrace i\in I : a_{i} = a_{0}\rbrace$. In particular, for each $i\in \Imin$ we define $\pi_{i}$ to be the unique outer automorphism sending $0$ to $i$. \\

The affine Cartan matrix $A$ is symmetrized by the diagonal matrix $D = \mathrm{diag}(d_{0},\dots,d_{n})$ where each $d_{i} = a_{i}^{\vee}a_{i}^{-1}$, which is to say that the product $DA$ is symmetric.
The standard non-degenerate symmetric bilinear form $(~,~)$ on $\hat{\mathfrak{h}}^{*}$ is defined by
\begin{align*}
    (\alpha_{i},\alpha_{j}) = d_{i}a_{ij}, \quad
    (\alpha_{i},\lambda_{0}) = d_{0}\delta_{i0}, \quad
    (\lambda_{0},\lambda_{0}) = 0,
\end{align*}
for all $i,j\in I$ and in particular satisfies $(\delta,\alpha_{i}) = 0$.
The corresponding isomorphism $\nu : \hat{\mathfrak{h}} \rightarrow \hat{\mathfrak{h}}^{*}$ maps each $\alpha_{i}^{\vee}\mapsto d_{i}^{-1}\alpha_{i}$ and sends $\lambda_{0}^{\vee}\mapsto d_{0}^{-1}\lambda_{0}$.
Throughout this paper we shall occasionally identify the elements of $\hat{\mathfrak{h}}$ with their images under $\nu$ without mention.
\\

The affine braid group $\B$ has a Coxeter presentation generated by $\lbrace T_{i} : i\in I\rbrace$ subject to braid relations $T_{i}T_{j}T_{i}\ldots = T_{j}T_{i}T_{j}\ldots$ with $a_{ij}a_{ji} + 2$ factors on each side (except in types $A_{1}^{(1)}$ and $A_{2}^{(2)}$ where $T_{0}$ and $T_{1}$ satisfy no relation).
This is clearly independent of the orientation of arrows in the underlying Dynkin diagram, and so any affine braid group is isomorphic to one of untwisted type.
\\

In all untwisted and $A_{2n}^{(2)}$ types, let $M = \Qov$ and $A_{i}^{\vee} = \alpha_{i}$ for each $i\in I$.
Conversely, in the remaining twisted types we define $M = \mathring{Q}$ and all $A_{i}^{\vee} = \alpha_{i}^{\vee}$.
Then in each case, the Bernstein presentation of $\B$ is generated by the finite braid group $\B_{0} = \langle T_{i} : i\in I_{0}\rangle$ and the lattice $\lbrace X_{\beta} : \beta \in M \rbrace$, with
\begin{itemize}
    \item $T_{i}X_{\beta} = X_{\beta}T_{i}$ if $(\beta,A_{i}^{\vee}) = 0$,
    \item $T_{i}^{-1}X_{\beta}T_{i}^{-1} = X_{s_{i}(\beta)}$ if $(\beta,A_{i}^{\vee}) = 1$.
\end{itemize}

When $M = \Qov$ the correspondence between the two presentations is given by $T_{0} = X_{\theta^{\vee}} T_{s_{\theta}}^{-1}$ where $\theta = \sum_{i\in I_{0}} a_{i}\alpha_{i}$ is the highest root of $\g$ and $\theta^{\vee} = \nu^{-1}(a_{0}^{-1}\theta)$.
Otherwise, $\theta$ is the short dominant root in $M = \mathring{Q}$ and we instead have $T_{0} = X_{\theta} T_{s_{\theta}}^{-1}$.
See \cite{IS20}*{Chapter 3} for more details, noting that the Bernstein presentation there is obtained from ours by applying the automorphism of $\B$ which inverts $T_{1},\dots,T_{n}$ and fixes each $X_{\beta}$.
\\

Throughout this paper we shall work over the field $\kk = \mathbb{Q}(q^{\min\lbrace d_{i} \rbrace})$.
Setting $q_{i} = q^{d_{i}}$ for all $i\in I$, the $q_{i}$-integers, $q_{i}$-factorials and $q_{i}$-binomial coefficients are defined as
\begin{align*}
    [s]_{i} = \frac{q_{i}^{s}-q_{i}^{-s}}{q_{i}-q_{i}^{-1}},
    \quad
    [s]_{i}! = \prod_{\ell=1}^{s} [\ell]_{i},
    \quad
    \begin{bmatrix}{s}\\ {r}\end{bmatrix}_{i} = \frac{[s]_{i}!}{[s-r]_{i}!\,[r]_{i}!}
\end{align*}
respectively for all non-negative integers $s\geq r$.
We then let $(\xipm)^{(s)} = (\xipm)^{s}/[s]_{i}!$ and $(\xpm_{i,m})^{(s)} = (\xpm_{i,m})^{s}/[s]_{i}!$ for elements $\xipm$ and $\xpm_{i,m}$ of certain quantum algebras defined in later sections.
Following Jing \cite{Jing98} we define twisted commutators $[b_{1},\dots,b_{s}]_{u_{1}\cdots u_{s-1}}$ inductively from $[b_{1},b_{2}]_{u} = b_{1}b_{2} - u b_{2}b_{1}$ and
\begin{align*}
    [b_{1},\dots,b_{s}]_{u_{1}\cdots u_{s-1}} = [b_{1},[b_{2},\dots,b_{s}]_{u_{1}\cdots u_{s-2}}]_{u_{s-1}}.
\end{align*}

Outside of type $A_{2n}^{(1)}$ we can fix a length function $o:I\rightarrow\lbrace\pm 1\rbrace$ satisfying $o(i) = -o(j)$ whenever $a_{ij}<0$.
We shall write $o_{i,j}$ as shorthand for $o(i)/o(j)$.
However, in type $A_{2n}^{(1)}$ this is not possible since the affine Dynkin diagram contains an odd length cycle.
For our purposes, there are two approximations to a length function to consider in this case: $o(i) = (-1)^{i}$ and $-o(i) = (-1)^{i+1}$.
Furthermore, we define $o_{i,j} = (-1)^{\overline{j-i}}$ for all $i,j\in I$, where $\overline{j-i}$ is the anti-clockwise distance $i\rightarrow j$ in the affine Dynkin diagram.

\section{The affine situation} \label{Affine Section}

In this section we introduce the quantum affine algebras $\Uaff$, and outline in the untwisted case their alternative Drinfeld new presentation as the quantum affinizations of finite quantum groups.
We then present the automorphisms of $\Uaff$ which form the action of the extended affine braid group $\Bd$ due to Lusztig \cite{Lusztig93} and Beck \cite{Beck94}.

\subsection{Quantum affine algebras} \label{quantum affine algebra subsection}

For an arbitrary symmetrizable Kac-Moody algebra $\s$ with generalized Cartan matrix $(a_{ij})_{i,j\in I}$, the corresponding quantum group is given in terms of certain Chevalley style generators as follows.

\begin{defn} \label{quantum group definition}
The quantum group $U_{q}(\s)$ is the unital associative $\kk$-algebra generated by elements $x_{i}^{\pm}$ and $t_{i}^{\pm 1}$ for each $i\in I$, subject to the following relations:
\begin{itemize}
    \item $\displaystyle t_{i}^{\pm 1} t_{i}^{\mp 1} = 1$,
    \item $\displaystyle [t_{i},t_{j}] = 0$,
    \item $\displaystyle t_{i} \xjpm t_{i}^{-1} = q_{i}^{\pm a_{ij}} \xjpm$,
    \item $\displaystyle [\xip,\xjm] = \frac{\delta_{ij}}{q_{i}-q_{i}^{-1}} (t_{i} - t_{i}^{-1})$,
    \item $\displaystyle \sum_{s=0}^{1-a_{ij}} (-1)^{s} (\xipm)^{(s)} \xjpm (\xipm)^{(1-a_{ij}-s)} = 0$ whenever $i\not= j$.
\end{itemize}
\end{defn}

This is called the Drinfeld-Jimbo presentation of the quantum group.
In particular, for any affine Kac-Moody algebra $\gaff$ we have an associated quantum affine algebra $\Uaff$.
Depending on the context, some authors include an extra degree operator in their definition for $\Uaff$.
However, we shall not do so in this paper.
\\

In the untwisted case $\Uaff$ has an alternative \textit{Drinfeld new presentation}, first stated by Drinfeld \cite{Drinfeld88}, as the quantum affinization of the finite quantum group $\Uq$.
One may view this as a deformation quantization of the one-dimensional central extension of the loop algebra $\g[t,t^{-1}]$.
Loosely speaking, the $\xp_{i,m},\xm_{i,m},h_{i,r},k_{i}$ generators below correspond to the elements $e_{i}t^{m},f_{i}t^{m},h_{i}t^{r},h_{i}$ respectively inside $\g[t,t^{-1}]$, and $C$ is identified with the central extension.

\begin{defn} \label{quantum affinization definition}
    The quantum affinization of $U_{q}(\s)$ is the unital associative $\kk$-algebra $\widehat{U_{q}(\s)}$ with generators $\xpm_{i,m}$, $h_{i,r}$, $k_{i}^{\pm 1}$, $C^{\pm 1}$ ($i\in I$, $m\in\mathbb{Z}$, $r\in\mathbb{Z}^{*}$) and relations
\begin{itemize}
    \item $C^{\pm 1}$ central,
    \item $\displaystyle C^{\pm 1}C^{\mp 1} = k_{i}^{\pm 1} k_{i}^{\mp 1} = 1$,
    \item $\displaystyle [k_{i},k_{j}] = [k_{i},h_{j,r}] = 0$,
    \item $\displaystyle [h_{i,r},h_{j,s}] = \delta_{r+s,0} \frac{[ra_{ij}]_{i}}{r} \frac{C^{r}-C^{-r}}{q_{j}-q_{j}^{-1}}$,
    \item $\displaystyle k_{i} \xpm_{j,m} k_{i}^{-1} = q_{i}^{\pm a_{ij}} \xpm_{j,m}$,
    \item $\displaystyle [h_{i,r},\xpm_{j,m}] = \pm \frac{[ra_{ij}]_{i}}{r} C^{\frac{r \mp |r|}{2}} \xpm_{j,r+m}$,
    \item $\displaystyle [\xp_{i,m},\xm_{j,l}] = \frac{\delta_{ij}}{q_{i}-q_{i}^{-1}} (C^{-l}\phi^{+}_{i,m+l} - C^{-m}\phi^{-}_{i,m+l})$,
    \item $[\xpm_{i,m+1},\xpm_{j,l}]_{q_{i}^{\pm a_{ij}}} + [\xpm_{j,l+1},\xpm_{i,m}]_{q_{i}^{\pm a_{ij}}} = 0$,
\end{itemize}
and whenever $i\not= j$, for any integers $m$ and $m_{1},\dots,m_{a'}$ where $a' = 1 - a_{ij}$,
\begin{itemize}
    \item $\displaystyle
    \sum_{\pi\in S_{a'}}
    \sum_{s=0}^{a'} (-1)^{s}
    {\begin{bmatrix}a'\\s\end{bmatrix}}_{i}
    \xpm_{i,m_{\pi(1)}}\dots\xpm_{i,m_{\pi(s)}}
    \xpm_{j,m}
    \xpm_{i,m_{\pi(s+1)}}\dots\xpm_{i,m_{\pi(a')}}
    = 0$.
\end{itemize}
Here, the $\phi^{\pm}_{i,\pm s}$ are given by the formula
$$ \sum_{s\geq 0} \phi^{\pm}_{i,\pm s} z^{\pm s} =
k_{i}^{\pm 1} \exp{\left( \pm (q_{i}-q_{i}^{-1})\sum_{s'>0}h_{i,\pm s'} z^{\pm s'} \right)}
$$
when $s\geq 0$, and are zero otherwise.
\end{defn}

The relationship between these two presentations of $\Uaff$ in the untwisted case was first studied by Beck \cite{Beck94}, who used an action of the extended affine braid group to construct a morphism from the Drinfeld new realization to the Drinfeld-Jimbo realization.
Jing \cite{Jing98} then defined an inverse morphism using $q$-commutators, while Damiani proved the surjectivity \cite{Damiani12} and injectivity \cite{Damiani15} of Beck's map.

\begin{rmk}
    The definition of quantum affinization varies slightly between sources.
    We use the one found for example in \cites{Damiani12,Miki99} since it is more precise regarding the isomorphism between the two presentations of $\Uaff$.
    The definition found in other works such as \cites{Beck94,Jing98,Hernandez09} can then be obtained by adjoining $C^{\pm 1/2}$ and scaling each $\xpm_{i,m}$ generator by $C^{m/2}$.
\end{rmk}

Let us now present Jing's isomorphism.
For each $i_{1}\in I_{0}$ there exist sequences $\underline{i} = (i_{1},i_{2},\dots,i_{h-1})$ in $I_{0}$ and $\underline{\epsilon} = (\epsilon_{1},\dots,\epsilon_{h-2})$ in $\mathbb{Q}_{\leq 0}$ such that
\begin{align} \label{Jing's isomorphism condition}
    (\alpha_{i_{1}}+\dots +\alpha_{i_{s}}\vert\alpha_{i_{s+1}}) = \epsilon_{s} \mathrm{~for~} s=1,\dots,h-2,
\end{align}
where $h = \sum_{i\in I}a_{i}$ is the Coxeter number of $\gaff$.
Then for any such sequences, the following extends to a $\kk$-algebra isomorphism from the Drinfeld-Jimbo realization of $\Uaff$ to the Drinfeld new realization:
\begin{itemize}
    \item $\xipm \mapsto \xpm_{i,0}$ and $t_{i} \mapsto k_{i}$ for each $i\in I_{0}$,
    \item $\xp_{0} \mapsto \left[\xm_{i_{h-1},0},\dots,\xm_{i_{2},0},\xm_{i_{1},1}\right]_{q^{\epsilon_{1}}\dots q^{\epsilon_{h-2}}} C k_{\theta}^{-1}$,
    \item $\xm_{0} \mapsto a(-q)^{-\epsilon} C^{-1} k_{\theta} \left[\xp_{i_{h-1},0},\dots,\xp_{i_{2},0},\xp_{i_{1},-1}\right]_{q^{\epsilon_{1}}\dots q^{\epsilon_{h-2}}}$,
    \item $t_{0} \mapsto C k_{\theta}^{-1}$,
\end{itemize}
where $k_{\theta} = k_{1}^{a_{1}}\dots k_{n}^{a_{n}}$, $\epsilon = \epsilon_{1}+\dots+\epsilon_{h-2}$, and $a$ is a constant depending on type (in particular $a=1$ when $\gaff$ is simply laced). Example sequences in all types can be found in \cite{Jing98}*{Table 2.1}.

\begin{rmk}
    It is clear in both presentations that $\Uaff$ contains a natural copy of the finite quantum group $\Uq$ -- it is the subalgebra generated by $\lbrace \xipm, t_{i}^{\pm 1} : i\in I_{0}\rbrace$ in the Drinfeld-Jimbo, and by $\lbrace \xpm_{i,0}, k_{i}^{\pm 1} : i\in I_{0}\rbrace$ in the Drinfeld new.
\end{rmk}

So we see that $\Uaff$ can formed from $\g$ either as the quantum group of the affine Kac-Moody algebra $\gaff$, or by performing quantum affinization to $\Uq$.
This is precisely the commutativity of the following diagram, taken from \cite{Hernandez09}.
\[\begin{tikzcd}
	\g & & & & \gaff \\
	\Uq & & & & \Uaff
	\arrow["\mathrm{Quantization}"', from=1-1, to=2-1]
	\arrow["\mathrm{Quantum~Affinization}"', from=2-1, to=2-5]
	\arrow["\mathrm{Affinization}", from=1-1, to=1-5]
	\arrow["\mathrm{Quantization}", from=1-5, to=2-5]
\end{tikzcd}\]
Since quantum affinization is defined for the quantum group of any Kac-Moody algebra, we can apply it to $\Uaff$ to obtain a sort of `double affine' quantum group.
As we will see in Section \ref{The Toroidal Situation}, this is precisely the quantum toroidal algebra $\Utor$.
It should be noted that $\Utor$ is not the quantum group of any Kac-Moody algebra, and so cannot by further affinized in this way.

\begin{rmk}
    In fact, in twisted types there is also a Drinfeld new realization of $\Uaff$.
    A morphism from the Drinfeld-Jimbo presentation was defined by Jing and Zhang \cites{JZ07,JZ10}, while the proof that it is an isomorphism was again completed by Damiani in \cites{Damiani12,Damiani15}.
    However, we do not include these cases here since they are not required for our purposes.
\end{rmk}

\subsection{Automorphisms and anti-automorphisms} \label{Automorphisms of Uaff section}

Next we shall present various automorphisms and anti-automorphisms of quantum groups and their quantum affinizations.
These are required to describe Lusztig and Beck's action of the extended affine braid group on $\Uaff$, and also for our work towards a corresponding toroidal result in Section \ref{The Toroidal Situation}. \\

First consider a quantum group $U_{q}(\s)$ coming from a generalized Cartan matrix $(a_{ij})_{i,j\in I}$ as explained in Section \ref{quantum affine algebra subsection}.
\begin{itemize}
    \item For each $i\in I$ there is an automorphism $\Tb_{i}$ defined by $\Tb_{i}(t_{j}) = t_{j} t_{i}^{-a_{ij}}$ and
    \begin{align*}
        &\Tb_{i}(\xip) = -\xim t_{i}, ~ \Tb_{i}(\xjp) = \sum_{s=0}^{-a_{ij}} (-1)^{s} q_{i}^{-s} (\xip)^{(-a_{ij}-s)} \xjp (\xip)^{(s)} \mathrm{~if~} i\not= j, \\
        & \Tb_{i}(\xim) = -t_{i}^{-1}\xip, ~ \Tb_{i}(\xjm) = \sum_{s=0}^{-a_{ij}} (-1)^{s} q_{i}^{s} (\xim)^{(s)} \xjm (\xim)^{(-a_{ij}-s)} \mathrm{~if~} i\not= j.
    \end{align*}
    Its inverse $\Tb_{i}^{-1}$ is given by $\Tb_{i}^{-1}(t_{j}) = t_{j} t_{i}^{-a_{ij}}$ and
    \begin{align*}
        &\Tb_{i}^{-1}(\xip) = -t_{i}^{-1}\xim, ~ \Tb_{i}^{-1}(\xjp) = \sum_{s=0}^{-a_{ij}} (-1)^{s} q_{i}^{-s} (\xip)^{(s)} \xjp (\xip)^{(-a_{ij}-s)} \mathrm{~if~} i\not= j, \\
        &\Tb_{i}^{-1}(\xim) = -\xip t_{i}, ~ \Tb_{i}^{-1}(\xjm) = \sum_{s=0}^{-a_{ij}} (-1)^{s} q_{i}^{s} (\xim)^{(-a_{ij}-s)} \xjm (\xim)^{(s)} \mathrm{~if~} i\not= j.
    \end{align*}
    \item Every automorphism $\pi$ of the associated Dynkin diagram gives rise to an automorphism $S_{\pi}$ of $U_{q}(\s)$ which permutes the generators accordingly:
    \begin{align*}
        S_{\pi}(\xjpm) = \xpm_{\pi(j)}, \quad S_{\pi}(t_{j}) = t_{\pi(j)}.
    \end{align*}
    \item There is an anti-involution $\sigma$ such that
    $\sigma(\xjpm) = \xjpm$ and $\sigma(t_{j}) = t_{j}^{-1}$.
    A quick check verifies that $\Tb_{i}^{-1} = \sigma \Tb_{i} \sigma$ for all $i\in I$.
\end{itemize}
Throughout this paper we shall use without comment that $\Tb_{i} \Tb_{j} (\xipm) = \xjpm$ and $\Tb_{i}^{-1} \Tb_{j}^{-1} (\xipm) = \xjpm$ whenever $a_{ij} = a_{ji} = -1$.

\begin{rmk}
    The automorphisms $\Tb_{i}$ and $\Tb_{i}^{-1}$ were first introduced in the general case by Lusztig \cite{Lusztig93}*{Chapter 37}, who denoted them by $T_{i,1}''$ and $T_{i,-1}'$ respectively.
\end{rmk}

Now consider the quantum affinization $\widehat{U_{q}(\s)}$ introduced in Definition \ref{quantum affinization definition}.
\begin{itemize}
    \item For each $i\in I$ there is an automorphism $\X_{i}$ given by
    \begin{align*}
        &\X_{i}(\xpm_{j,m}) = \upsilon(j)^{\delta_{ij}} \xpm_{j,m\mp\delta_{ij}}, \quad \X_{i}(h_{j,r}) = h_{j,r}, \\
        &\X_{i}(k_{j}) = C^{-\delta_{ij}} k_{j}, \quad \X_{i}(C) = C,
    \end{align*}
    where $\upsilon$ is any $\lbrace \pm 1 \rbrace$-valued function on $I$, for example a length function.
    \item There is also an anti-involution $\eta$ with
    \begin{align*}
        &\eta(\xpm_{i,m}) = \xpm_{i,-m}, \quad \eta(h_{i,r}) = -C^{r} h_{i,-r}, \quad \eta(k_{i}) = k_{i}^{-1}, \quad \eta(C) = C.
    \end{align*}
\end{itemize}
For untwisted $\Uaff$ in particular, considered with respect to the Drinfeld new realization and letting $\upsilon$ be the length function $o$ on $I_{0}$, we shall denote these by $\Xb_{i}$ and $\eta'$ respectively.
Note that in this case we have $\Tb_{i}^{-1} = \eta' \Tb_{i} \eta'$ for all $i\in I_{0}$. \\

Recall that the quantum toroidal algebra $\Utor$ will be formed as the quantum affinization of $\Uaff$, and therefore has a Drinfeld new style presentation.
In Section \ref{Toroidal Braid Group Action Section} we wish to define automorphisms $\T_{i}$ of $\Utor$ for each $i\in I$ which are comparable to the automorphisms $\Tb_{i}$ of $\Uaff$.
In particular, this requires us to write the actions of $\T_{i}$ on certain Drinfeld new style generators. \\

To this end, in the case of untwisted $\Uaff$ let us derive formulae for some of the $\Tb_{i}(\xpm_{j,m})$ when $i,j\in I_{0}$.
It is clear from the definitions that $\Tb_{i}$ commutes with $\Xb_{j}$ whenever $j\not= i$ and therefore
\begin{align*}
    &\Tb_{i}(\xp_{j,m}) = o(j)^{m}\Xb_{j}^{-m}\Tb_{i}(\xp_{j,0}) = \sum_{s=0}^{-a_{ij}} (-1)^{s} q_{i}^{-s} (\xp_{i,0})^{(-a_{ij}-s)} \xp_{j,m} (\xp_{i,0})^{(s)}, \\
    &\Tb_{i}(\xm_{j,m}) = o(j)^{m}\Xb_{j}^{m}\Tb_{i}(\xm_{j,0}) = \sum_{s=0}^{-a_{ij}} (-1)^{s} q_{i}^{s} (\xm_{i,0})^{(s)} \xm_{j,m} (\xm_{i,0})^{(-a_{ij}-s)},
\end{align*}
for all $m\in\mathbb{Z}$.
When $i=j$ the $\Tb_{i}(\xpm_{j,m})$ are calculated recursively on $|m|$ and expressions quickly become complicated. However, thanks to a simplified presentation of $\Utor$ coming from Proposition~\ref{simpler toroidal presentation}, we shall only require the case $m = 0,\mp 1$.
Let $U_{i}$ be the subalgebra of $\Uaff$ generated by $\lbrace \xpm_{i,m}, h_{i,r}, k_{i}^{\pm 1}, C^{\pm 1} : m\in\mathbb{Z}, r\in\mathbb{Z}^{*} \rbrace$, and $h_{i} : U_{q}(A_{1}^{(1)}) \xrightarrow{\sim} U_{i}$ be the morphism sending
\begin{align*}
    & q \mapsto q_{i}, \quad k_{1} \mapsto k_{i}, \quad k_{0} \mapsto C k_{i}^{-1}, \quad \xpm_{1} \mapsto \xpm_{i,0}, \\
    & \xp_{0} \mapsto - o(i) C k_{i}^{-1} \xm_{i,1}, \quad
    \xm_{0} \mapsto - o(i) \xp_{i,-1} k_{i} C^{-1}.
\end{align*}
Then by Corollary 3.8 of \cite{Beck94} we have $\Tb_{i}\circ h_{i} = h_{i}\circ \Tb_{1}$ and hence
\begin{align*}
    & \Tb_{i}(\xp_{i,-1}) = h_{i}\circ \Tb_{1}(- o(i) \xm_{0} k_{0})
    = \sum_{s=0}^{2} (-1)^{s}q_{i}^{s}(\xm_{i,0})^{(s)}\xp_{i,-1}(\xm_{i,0})^{(2-s)} k_{i}, \\
    & \Tb_{i}(\xm_{i,1}) = h_{i}\circ \Tb_{1}(- o(i) k_{0}^{-1} \xp_{0})
    = k_{i}^{-1} \sum_{s=0}^{2} (-1)^{s}q_{i}^{-s}(\xp_{i,0})^{(2-s)}\xm_{i,1}(\xp_{i,0})^{(s)}.
\end{align*}

\subsection{Extended affine braid groups} \label{Extended affine braid group subsection}
Here we introduce the extended affine braid group $\Bd$ and present its action on $\Uaff$ due to Lusztig \cite{Lusztig93} and Beck \cite{Beck94}.
For a more complete introduction to extended affine braid groups, the interested reader may wish to consult \cite{Mac03}*{Chapters 2-3} and \cite{IS20}*{Chapter 9}. \\

Recall from Section \ref{Preliminaries} the Coxeter and Bernstein presentations of the affine braid group $\B$.
By replacing the lattice $M$ in the latter with a larger lattice $N$, defined to be $\Pov$ in all untwisted and $A_{2n}^{(2)}$ types and $\mathring{P}$ otherwise, we obtain a Bernstein presentation for the extended affine braid group.

\begin{defn} \label{extended affine braid group Bernstein}
    The extended affine braid group $\Bd$ is generated by the finite braid group $\B_{0} = \langle T_{i} : i\in I_{0}\rangle$ and the lattice $\lbrace X_{\beta} : \beta \in N \rbrace$, subject to
    \begin{itemize}
        \item $T_{i}X_{\beta} = X_{\beta}T_{i}$ if $(\beta,A_{i}^{\vee}) = 0, \hfill \refstepcounter{equation}(\theequation)\label{first extended affine Bernstein}$
        \item $T_{i}^{-1}X_{\beta}T_{i}^{-1} = X_{s_{i}(\beta)}$ if $(\beta,A_{i}^{\vee}) = 1. \hfill \refstepcounter{equation}(\theequation)\label{second extended affine Bernstein}$
    \end{itemize}
\end{defn}

There is also a Coxeter style presentation of $\Bd$.
It is clear that $\B_{0}$ and $\lbrace X_{\beta} : \beta \in M \rbrace$ generate a normal subgroup of $\Bd$ isomorphic to $\B$, and therefore $\Bd \cong (\Bd/\B) \ltimes \B$.
When $N = \Pov$ set $\beta_{\theta} = \theta^{\vee}$ and $\beta_{i} = \omega_{i}^{\vee}$ for each $i\in I$, and when $N = \mathring{P}$ set $\beta_{\theta} = \theta$ and each $\beta_{i} = \omega_{i}$.
Let $v_{i} = w_{0}w_{0i}$ where $w_{0}$ is the longest element\footnote{For a nice explanation of how to find a reduced expression for any $w_{0}$ (and thus $w_{0i}$) by $2$-colouring the Dynkin diagram, see Allen Knutson's answer at
\url{https://mathoverflow.net/questions/54926/longest-element-of-weyl-groups}
(last accessed 31$^{\mathrm{st}}$ March 2023).}
of $W_{0}$ and $w_{0i}$ is the longest element of the isotropy subgroup $\langle s_{j} : j\not= i\rangle$ of $\beta_{i}$.
It was shown in \cite{Mac03}*{Chapter 2} that $\Bd/\B = \lbrace U_{i} = X_{\beta_{i}} T_{v_{i}}^{-1} : i\in \Imin \rbrace$, and further that $\Bd/\B$ acts on $\B$ by outer automorphisms of the affine Dynkin diagram.
More specifically, $U_{i}T_{j}U_{i}^{-1} = T_{\pi_{i}(j)}$ for all $i\in \Imin$ and $j\in I$ and so we have the following.

\begin{prop} \label{extended affine braid group isomorphism}
    The extended affine braid group $\Bd$ is isomorphic to the semidirect product $\Omega \ltimes \B$.
\end{prop}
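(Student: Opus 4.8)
The plan is to leverage the two facts already recorded before the statement: that $\B$ is normal in $\Bd$ with a splitting $\Bd \cong (\Bd/\B)\ltimes\B$, and that the quotient is realised by the coset representatives $\lbrace U_{i} : i\in\Imin\rbrace$ acting on $\B$ via $U_{i}T_{j}U_{i}^{-1} = T_{\pi_{i}(j)}$. Writing $\Gamma = \langle U_{i} : i\in\Imin\rangle$ for the corresponding complement, it then suffices to produce a group isomorphism $\Gamma \xrightarrow{\sim}\Omega$ sending $U_{i}\mapsto\pi_{i}$ which intertwines the conjugation action of $\Gamma$ on $\B$ with the natural action of $\Omega$ by diagram automorphisms; the semidirect product decomposition then transports to $\Bd\cong\Omega\ltimes\B$.

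First I would set up the two actions on $\B$ explicitly. Each $\pi\in\Omega$ permutes the index set $I$ and preserves the Cartan entries $a_{ij}$, hence the braid relations, so $T_{j}\mapsto T_{\pi(j)}$ extends to an automorphism $\phi(\pi)$ of $\B$; this gives a homomorphism $\phi:\Omega\to\mathrm{Aut}(\B)$. I would check $\phi$ is injective by composing with the canonical surjection $\B\twoheadrightarrow W$, $T_{j}\mapsto s_{j}$: if $\phi(\pi)=\mathrm{id}$ then $s_{\pi(j)}=s_{j}$ in $W$ for every $j$, and since distinct simple reflections are distinct in the affine Weyl group this forces $\pi(j)=j$ for all $j$, i.e. $\pi=\mathrm{id}$. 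On the other side, conjugation defines a homomorphism $c:\Gamma\to\mathrm{Aut}(\B)$, and the relation $U_{i}T_{j}U_{i}^{-1}=T_{\pi_{i}(j)}$ says precisely that $c(U_{i})=\phi(\pi_{i})$.

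The main point is then a counting/faithfulness argument to match the group structures, which I expect to be the only real obstacle. Since $\Gamma=\langle U_{i}\rangle$ and $\Omega=\lbrace\pi_{i}:i\in\Imin\rbrace=\langle\pi_{i}\rangle$, both $c(\Gamma)$ and $\phi(\Omega)$ equal the subgroup $\langle\phi(\pi_{i}):i\in\Imin\rangle$ of $\mathrm{Aut}(\B)$. Because $\phi$ is injective we have $|\phi(\Omega)|=|\Omega|=|\Imin|$, and Macdonald's identification $\Bd/\B=\lbrace U_{i}:i\in\Imin\rbrace$ gives $|\Gamma|=|\Imin|$ as well; a homomorphism from a finite group onto a group of the same order is injective, so $c$ is injective too. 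Hence $c:\Gamma\xrightarrow{\sim}\phi(\Omega)$ and $\phi:\Omega\xrightarrow{\sim}\phi(\Omega)$ are isomorphisms, and $\phi^{-1}\circ c:\Gamma\xrightarrow{\sim}\Omega$ sends $U_{i}\mapsto\pi_{i}$.

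Finally, this isomorphism is compatible with the two actions essentially by construction: since $c=\phi\circ(\phi^{-1}\circ c)$, the conjugation action of any $\gamma\in\Gamma$ on $\B$ equals the diagram-automorphism action of its image $(\phi^{-1}\circ c)(\gamma)\in\Omega$. Therefore the internal semidirect product $\Bd\cong\Gamma\ltimes\B$ is carried, via $\phi^{-1}\circ c$ on the complement and the identity on $\B$, to the external semidirect product $\Omega\ltimes\B$ in which $\Omega$ acts by diagram automorphisms, which is precisely the claim. The delicate step is the faithfulness of $c$ — i.e. ensuring the set bijection $U_{i}\leftrightarrow\pi_{i}$ is a genuine group isomorphism rather than a mere labelling — while everything else is bookkeeping resting on the cited results of Macdonald.
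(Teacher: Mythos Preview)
Your proposal is correct and follows the same route as the paper, which simply records Macdonald's results (that $\Bd/\B = \lbrace U_{i} : i\in\Imin\rbrace$ and that conjugation by $U_{i}$ permutes the $T_{j}$ via $\pi_{i}$) and then treats the proposition as immediate. You have usefully made explicit the one point the paper leaves tacit --- namely that the set bijection $U_{i}\leftrightarrow\pi_{i}$ is a genuine group isomorphism --- via the faithfulness of the $\Omega$-action on $\B$ together with a cardinality count.
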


The correspondence between the Coxeter and Bernstein presentations of $\Bd$ is given by $T_{0} = X_{\beta_{\theta}} T_{s_{\theta}}^{-1}$ and $\pi_{i} = X_{\beta_{i}} T_{v_{i}}^{-1}$ for each $i\in \Imin$.

\begin{rmk} \label{alternative Bernstein presentation}
    There is an automorphism of $\Bd$ which inverts $T_{0},\dots,T_{n}$ and fixes each element of $\Omega$.
    Letting $Y_{\beta}$ be the image of $X_{\beta}$ for all $\beta \in N$, we obtain an \textit{alternative Bernstein presentation} of $\Bd$ matching that of \cite{IS20}*{Proposition 9.1}.
    In particular, for each $i\in I_{0}$ and $\beta \in N$ we have the relations
    \begin{itemize}
        \item $T_{i}Y_{\beta} = Y_{\beta}T_{i}$ if $(\beta,A_{i}^{\vee}) = 0, \hfill \refstepcounter{equation}(\theequation)\label{first alternative extended affine Bernstein}$
        \item $T_{i} Y_{\beta}T_{i} = Y_{s_{i}(\beta)}$ if $(\beta,A_{i}^{\vee}) = 1. \hfill \refstepcounter{equation}(\theequation)\label{second alternative extended affine Bernstein}$
    \end{itemize}
    It immediately follows that the Coxeter presentation relates to this alternative Bernstein presentation via $T_{0} = T_{s_{\theta}}^{-1}Y_{-\beta_{\theta}}$ and $\pi_{i} = Y_{\beta_{i}} T_{v_{i}^{-1}}$ for each $i\in \Imin$.
\end{rmk}

\begin{eg}
We fix natural representatives for $\pi_{i}$ in all affine types where there exists a non-trivial automorphism of the corresponding finite Dynkin diagram.
\begin{itemize}
    \item In type $A_{n}^{(1)}$ we have $\Omega \cong \mathbb{Z}_{n+1}$ by identifying $\pi_{i} = ( j \mapsto j+i~\mathrm{mod}~n+1)$ with $i\in\mathbb{Z}_{n+1}$ for each $i\in I$.
    \item In type $D_{2n}^{(1)}$ we have $\Omega \cong \mathbb{Z}_{2}\times\mathbb{Z}_{2}$ with non-trivial elements given by
    \begin{align*}
        \pi_{1} &= (0\leftrightarrow 1, n-1\leftrightarrow n), \\
        \pi_{n-1} &= (0\leftrightarrow n-1, 1\leftrightarrow n), \\
        \pi_{n} &= (0\leftrightarrow n, 1\leftrightarrow n-1).
    \end{align*}
    \item In type $D_{2n+1}^{(1)}$ we instead have $\Omega \cong \mathbb{Z}_{4}$ with
    \begin{align*}
        \pi_{1} &= (0\leftrightarrow 1, n-1\leftrightarrow n), \\
        \pi_{n-1} &= (0\mapsto n-1\mapsto 1\mapsto n\mapsto 0), \\
        \pi_{n} &= (0\mapsto n\mapsto 1\mapsto n-1\mapsto 0).
    \end{align*}
    \item In type $E_{6}^{(1)}$ we have $\Omega \cong \mathbb{Z}_{3}$ and non-trivial elements
    \begin{align*}
        \pi_{1} = (0\mapsto 1\mapsto 5\mapsto 0), \\
        \pi_{5} = (0\mapsto 5\mapsto 1\mapsto 0).
    \end{align*}
\end{itemize}
\end{eg}

We are now ready to state Lusztig and Beck's affine action.

\begin{thm} \label{Beck's affine result}
    The extended affine braid group $\Bd$ acts on the quantum affine algebra $\Uaff$ via $T_{i} \rightarrow \Tb_{i}$ for each $i\in I$ and $\pi \rightarrow S_{\pi}$ for each $\pi\in\Omega$.
    Furthermore, in all untwisted types it follows that $X_{\omega_{i}^{\vee}}$ acts by the automorphism $\Xb_{i}$ for each $i\in I_{0}$.
\end{thm}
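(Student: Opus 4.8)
The plan is to exploit the isomorphism $\Bd \cong \Omega \ltimes \B$ from Proposition~\ref{extended affine braid group isomorphism}, which reduces the construction of an action of $\Bd$ on $\Uaff$ to three ingredients: an action of the affine braid group $\B$ via the Coxeter generators $T_{i} \mapsto \Tb_{i}$; an action of $\Omega$ via $\pi \mapsto S_{\pi}$; and the compatibility $S_{\pi}\Tb_{i}S_{\pi}^{-1} = \Tb_{\pi(i)}$ reflecting the semidirect product relation $\pi T_{i}\pi^{-1} = T_{\pi(i)}$.

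For the first ingredient I would appeal directly to Lusztig. Since $\Uaff = U_{q}(\gaff)$ is the quantum group of the symmetrizable Kac-Moody algebra $\gaff$, the $\Tb_{i}$ are precisely his symmetries $T_{i,1}''$ of \cite{Lusztig93}*{Chapter 37}, and \cite{Lusztig93}*{Chapter 39} establishes that they satisfy the braid relations of the Weyl group $W$ attached to $A$. As these coincide with the defining relations of the Coxeter presentation of $\B$, the assignment $T_{i}\mapsto\Tb_{i}$ extends to an action. For the second, each $\pi\in\Omega$ preserves the Cartan entries $a_{ij}$ and the labels $d_{i}$, hence $q_{i}$, so the generator-permuting map $S_{\pi}$ carries each Drinfeld-Jimbo relation to another and is a well-defined automorphism, with $\pi\mapsto S_{\pi}$ clearly multiplicative. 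For the third, I would check $S_{\pi}\Tb_{i}S_{\pi}^{-1} = \Tb_{\pi(i)}$ on the generators $\xipm, t_{i}$: using $a_{\pi(i)\pi(j)} = a_{ij}$ and $q_{\pi(i)} = q_{i}$, conjugating the explicit formula for $\Tb_{i}$ by $S_{\pi}$ reproduces verbatim the formula defining $\Tb_{\pi(i)}$. Assembling these via $\Bd \cong \Omega\ltimes\B$ yields the action.

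For the furthermore, I would pass through the Bernstein presentation. In the untwisted case $N = \Pov$ and $\beta_{i} = \omega_{i}^{\vee}$, so the dictionary $\pi_{i} = X_{\omega_{i}^{\vee}}T_{v_{i}}^{-1}$ gives $X_{\omega_{i}^{\vee}} = \pi_{i}T_{v_{i}}$ for $i\in\Imin$; under the action this element therefore acts by $S_{\pi_{i}}\circ\Tb_{v_{i}}$, where $\Tb_{v_{i}}$ is the product along a reduced word for $v_{i} = w_{0}w_{0i}$, well-defined by the braid relations. It then remains to verify $S_{\pi_{i}}\Tb_{v_{i}} = \Xb_{i}$ by computing the composite on the Drinfeld new generators and matching it with the shift $\xpm_{j,m}\mapsto o(j)^{\delta_{ij}}\xpm_{j,m\mp\delta_{ij}}$, $k_{j}\mapsto C^{-\delta_{ij}}k_{j}$, $C\mapsto C$. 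To reach the remaining $i\in I_{0}\setminus\Imin$, I would instead confirm that the $\Xb_{i}$ satisfy the Bernstein relations \eqref{first extended affine Bernstein}--\eqref{second extended affine Bernstein} together with the $\Tb_{j}$; this pins down the action of the whole lattice $\lbrace X_{\beta}:\beta\in\Pov\rbrace$ and in particular forces $X_{\omega_{i}^{\vee}}\mapsto\Xb_{i}$.

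The main obstacle is this last identification $S_{\pi_{i}}\Tb_{v_{i}} = \Xb_{i}$, equivalently the compatibility of the $\Xb_{i}$ with the $\Tb_{j}$. Translating the action of a long composite braid automorphism into the Drinfeld new realization is precisely the technical heart of Beck's work, and relies on carrying the computation through Jing's isomorphism (of which the relation $\Tb_{i}\circ h_{i} = h_{i}\circ\Tb_{1}$ from \cite{Beck94}*{Corollary 3.8} used above is a sample). Rather than reproduce this in full, I would cite the relevant statements of \cite{Beck94} and restrict myself to spelling out the conventions needed to match our normalisation of $\Xb_{i}$.
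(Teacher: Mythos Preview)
The paper does not prove this theorem: it is stated without proof as a known result attributed to Lusztig \cite{Lusztig93} and Beck \cite{Beck94}, so there is no ``paper's own proof'' to compare against. Your outline is a faithful reconstruction of the standard argument --- Lusztig supplies the braid relations among the $\Tb_{i}$, the diagram-automorphism compatibility is a direct check, and Beck's contribution is precisely the identification of the lattice part of the Bernstein presentation with the shift automorphisms $\Xb_{i}$ in the Drinfeld new realization. Your acknowledgement that the step $S_{\pi_{i}}\Tb_{v_{i}} = \Xb_{i}$ is the nontrivial core, to be cited from \cite{Beck94} rather than reproved, is exactly the right call and matches how the paper treats the result.
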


\section{The toroidal situation} \label{The Toroidal Situation}

We now move to the toroidal setting, where we are able to obtain an action of the extended double affine braid group $\Bdd$ on the quantum toroidal algebra $\Utor$ in all untwisted and twisted types other than $A_{1}^{(1)}$ and $A_{2}^{(2)}$.
The construction of certain $\T_{i}$ automorphisms involved in this action requires a simplified Drinfeld new style presentation of $\Utor$ given in terms of finitely many generators and relations.

\subsection{Quantum toroidal algebras} \label{quantum toroidal algebras subsection}

As mentioned in Section \ref{Affine Section}, the quantum toroidal algebra of type $X_{n}^{(r)}$ is defined to be the quantum affinization of the quantum affine algebra $\Uaff$ of type $X_{n}^{(r)}$.

\begin{defn} \label{quantum toroidal algebra definition}
    The quantum toroidal algebra $\Utor$ is the unital associative $\kk$-algebra with generators $\xpm_{i,m}$, $h_{i,r}$, $k_{i}^{\pm 1}$, $C^{\pm 1}$ ($i\in I$, $m\in\mathbb{Z}$, $r\in\mathbb{Z}^{*}$), subject to the following relations:
\begin{itemize}
    \item $C^{\pm 1}$ central,
    \item $\displaystyle C^{\pm 1}C^{\mp 1} = k_{i}^{\pm 1} k_{i}^{\mp 1} = 1$,
    \item $\displaystyle [k_{i},k_{j}] = [k_{i},h_{j,r}] = 0$,
    \item $\displaystyle [h_{i,r},h_{j,s}] = \delta_{r+s,0} \frac{[ra_{ij}]_{i}}{r} \frac{C^{r}-C^{-r}}{q_{j}-q_{j}^{-1}}$,
    \item $\displaystyle k_{i} \xpm_{j,m} k_{i}^{-1} = q_{i}^{\pm a_{ij}} \xpm_{j,m}$,
    \item $\displaystyle [h_{i,r},\xpm_{j,m}] = \pm \frac{[ra_{ij}]_{i}}{r} C^{\frac{r \mp |r|}{2}} \xpm_{j,r+m}$,
    \item $\displaystyle [\xp_{i,m},\xm_{j,l}] = \frac{\delta_{ij}}{q_{i}-q_{i}^{-1}} (C^{-l}\phi^{+}_{i,m+l} - C^{-m}\phi^{-}_{i,m+l})$,
    \item $[\xpm_{i,m+1},\xpm_{j,l}]_{q_{i}^{\pm a_{ij}}} + [\xpm_{j,l+1},\xpm_{i,m}]_{q_{i}^{\pm a_{ij}}} = 0$,
\end{itemize}
and whenever $i\not= j$, for any integers $m$ and $m_{1},\dots,m_{a'}$ where $a' = 1 - a_{ij}$,
\begin{itemize}
    \item $\displaystyle
    \sum_{\pi\in S_{a'}}
    \sum_{s=0}^{a'} (-1)^{s}
    {\begin{bmatrix}a'\\s\end{bmatrix}}_{i}
    \xpm_{i,m_{\pi(1)}}\dots\xpm_{i,m_{\pi(s)}}
    \xpm_{j,m}
    \xpm_{i,m_{\pi(s+1)}}\dots\xpm_{i,m_{\pi(a')}}
    = 0$.
\end{itemize}
Here, the $\phi^{\pm}_{i,\pm s}$ are given by the formula
$$ \sum_{s\geq 0} \phi^{\pm}_{i,\pm s} z^{\pm s} =
k_{i}^{\pm 1} \exp{\left( \pm (q_{i}-q_{i}^{-1})\sum_{s'>0}h_{i,\pm s'} z^{\pm s'} \right)}
$$
when $s\geq 0$, and are zero otherwise.
\end{defn}

\begin{rmk}
    In type $A_{n}^{(1)}$ there is a two-parameter deformation $U_{q,\kappa}(\mathfrak{sl}_{n+1,\mathrm{tor}})$ where some of the relations are modified to involve additional central generators $\kappa^{\pm 1}$.
    Indeed this is the algebra considered by Miki \cite{Miki99}, and specialises to the above at $\kappa = 1$.
    However, such a deformation is not known to exist in other types and thus will not be treated in this paper.
\end{rmk}

So we see that the quantum toroidal algebra $\Utor$ of type $X_{n}^{(r)}$ can be obtained from the corresponding finite quantum group $\Uq$ by affinizing twice on the quantum level.
In fact, $\Utor$ contains two natural quantum affine subalgebras.
There is a horizontal subalgebra $\Uh$ of type $X_{n}^{(r)}$ defined as the image of the homomorphism $h : U_{q}(X_{n}^{(r)}) \rightarrow \Utor$ sending
\begin{align*}
    \xipm \mapsto \xpm_{i,0}, \quad t_{i} \mapsto k_{i},
\end{align*}
for all $i\in I$.
Additionally, there is a vertical subalgebra $\Uv$ of untwisted type $Z_{n}^{(1)}$, where $Z_{n}$ is the finite Cartan type of the simple Lie algebra $\g$.
It is the image of the homomorphism $v : U_{q}(Z_{n}^{(1)}) \rightarrow \Utor$ given by
\begin{align*}
    \xpm_{i,m} \mapsto \xpm_{i,m}, \quad h_{i,r} \mapsto h_{i,r}, \quad k_{i} \mapsto k_{i}, \quad C \mapsto C,
\end{align*}
for all $i\in I_{0}$, $m\in\mathbb{Z}$ and $r\in\mathbb{Z}^{*}$.
Furthermore, we are able to deduce from the next proposition that $\Uh$ and $\Uv$ together generate the entire quantum toroidal algebra.
Figure \ref{Utor illustration} provides a simple illustration of $\Utor$ which highlights its generators and their $\mathbb{Z}$-grading, as well as the horizontal and vertical subalgebras.
\begin{figure}
    \centering
\begin{tikzpicture}[scale=1]
    \node at (-0.9,0.35) {$\xpm_{0,0} ~~ k^{\pm 1}_{0}$};
    \node at (-0.9,1.5) {$\xpm_{0,1} ~~ h_{0,1}$};
    \node at (-0.9,-0.8) {$\xpm_{0,-1} ~~ h_{0,-1}$};
    \node at (-0.9,2.4) {$\vdots$};
    \node at (-0.9,-1.5) {$\vdots$};
    \node at (1.5,0.35) {$\xpm_{1,0} ~~ k^{\pm 1}_{1}$};
    \node at (1.5,1.5) {$\xpm_{1,1} ~~ h_{1,1}$};
    \node at (1.5,-0.8) {$\xpm_{1,-1} ~~ h_{1,-1}$};
    \node at (1.5,2.4) {$\vdots$};
    \node at (1.5,-1.5) {$\vdots$};
    \node at (3,-1.5) {$C^{\pm 1}$};
    \node at (3,0.35) {$\cdots$};
    \node at (3,1.5) {$\cdots$};
    \node at (3,-0.8) {$\cdots$};
    \node at (4.5,0.35) {$\xpm_{n,0} ~~ k^{\pm 1}_{n}$};
    \node at (4.5,1.5) {$\xpm_{n,1} ~~ h_{n,1}$};
    \node at (4.5,-0.8) {$\xpm_{n,-1} ~~ h_{n,-1}$};
    \node at (4.5,2.4) {$\vdots$};
    \node at (4.5,-1.5) {$\vdots$};
    \node at (-1.8,0.7) {$\color{blue} \Uh$};
    \node at (0.6,2.55) {$\color{red} \Uv$};
    \draw[draw=blue] (-2.1,-0.25) rectangle ++(7.8,1.2);
    \draw[draw=red] (0.3,-2.1) rectangle ++(5.5,4.9);
    \draw[draw=black] (-2.2,-2.2) rectangle ++(8.1,5.1);
\end{tikzpicture}
\caption{\hspace{.5em}Illustration of $\Utor$ and its quantum affine subalgebras $\Uh$ and $\Uv$}
\label{Utor illustration}
\end{figure}
\\

For any $i_{1},\dots,i_{p}\in I$ we define $\U_{i_{1}\dots i_{p}}$ to be the subalgebra of $\Utor$ generated by
$\lbrace \xpm_{\ell,m}, h_{\ell,r}, k_{\ell}^{\pm 1}, C^{\pm 1} : \ell = i_{1},\dots,i_{p}, \, m\in\mathbb{Z}, \, r\in\mathbb{Z}^{*} \rbrace$.
Then it is clear that each $\U_{i} \cong \UaffA$, and if $i\not= j$ then
\begin{align*}
    \U_{ij} \cong
    \begin{cases}
        \UaffA \times \UaffA &\mathrm{~if~} a_{ij}a_{ji} = 0, \\
        \UaffAA &\mathrm{~if~} a_{ij}a_{ji} = 1, \\
        \UaffC &\mathrm{~if~} a_{ij}a_{ji} = 2, \\
        \UaffG &\mathrm{~if~} a_{ij}a_{ji} = 3, \\
        \Utor &\mathrm{~in~types~} A_{1}^{(1)} \mathrm{~and~} A_{2}^{(2)}.
    \end{cases}
\end{align*}

\begin{prop} \label{toroidal generated by horizontal and Ui}
    For each $i\in I$, the quantum toroidal algebra is generated by $\Uh$, $\xpm_{i,\mp 1}$ and $C^{\pm 1}$.
\end{prop}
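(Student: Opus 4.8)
The plan is to show that the subalgebra $\A \subseteq \Utor$ generated by $\Uh$, $\xp_{i,-1}$, $\xm_{i,1}$ and $C^{\pm 1}$ is in fact all of $\Utor$. Since $\Utor$ is generated by the $\xpm_{j,m}$, $h_{j,r}$, $k_{j}^{\pm 1}$ and $C^{\pm 1}$, and since $\Uh$ already contains every $\xpm_{j,0}$ and $k_{j}^{\pm 1}$ (while $C^{\pm 1}\in\A$ by definition), it suffices to prove that $\xpm_{j,m}\in\A$ for all $j\in I$, $m\in\mathbb{Z}$, and that $h_{j,r}\in\A$ for all $j\in I$, $r\in\mathbb{Z}^{*}$.

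First I would treat the distinguished index $i$ itself. Using the rank-one isomorphism $\U_{i}\cong\UaffA$, realised by the analogue for $\Utor$ of the map $h_{i}$ from Section~\ref{Automorphisms of Uaff section}, the Drinfeld--Jimbo generators of $\UaffA$ are sent to $\xpm_{i,0}$, $k_{i}^{\pm 1}$, $-o(i)Ck_{i}^{-1}\xm_{i,1}$ and $-o(i)\xp_{i,-1}k_{i}C^{-1}$, each of which lies in $\A$. As these generate $\UaffA$ and the map is onto $\U_{i}$, this gives $\U_{i}\subseteq\A$; in particular $\xpm_{i,m}\in\A$ for all $m$ and $h_{i,r}\in\A$ for all $r\neq 0$.

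Next I would propagate outwards along the (connected) affine Dynkin diagram by induction, the inductive hypothesis at a vertex $j'$ being that $\xpm_{j',m}\in\A$ for all $m$ and $h_{j',r}\in\A$ for all $r\neq 0$. For any neighbour $j$ of $j'$ we have $a_{j'j}\neq 0$, so $[ra_{j'j}]_{j'}\neq 0$ over $\kk$; the relation $[h_{j',r},\xpm_{j,m}] = \pm\frac{[ra_{j'j}]_{j'}}{r}C^{\frac{r\mp|r|}{2}}\xpm_{j,r+m}$ then lets me raise and lower the loop index, so that starting from $\xpm_{j,0}\in\Uh$ and applying $\mathrm{ad}(h_{j',\pm 1})$ repeatedly produces every $\xpm_{j,m}$ (the invertible factors $C^{\pm 1}$ and the nonzero scalars being absorbed into $\A$). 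Having all $\xpm_{j,m}$, I would recover the $h_{j,r}$ from $[\xp_{j,s},\xm_{j,0}] = \frac{1}{q_{j}-q_{j}^{-1}}\phi^{+}_{j,s}$ for $s>0$ (using $\phi^{-}_{j,s}=0$) together with the corresponding negative statement, and then solving recursively for $h_{j,r}$ using the exponential that defines $\phi^{\pm}_{j,\pm s}$ in terms of $k_{j}$ and the $h_{j,r}$. Since the diagram is connected this reaches every $j\in I$, whence $\A$ contains all generators of $\Utor$ and $\A=\Utor$.

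The main obstacle is the first step: securing $\U_{i}\subseteq\A$ by reducing to the rank-one computation and verifying that the generators of $\U_{i}$ are genuinely expressible through $\xpm_{i,0}$, $k_{i}^{\pm 1}$, $C^{\pm 1}$ and the two extra elements $\xp_{i,-1},\xm_{i,1}$ — here the specific choice of signs in $\xpm_{i,\mp 1}$ is exactly what makes the elements match the images of the affine node under $h_{i}$. Once $\U_{i}\subseteq\A$ is in hand, the remaining propagation is forced by the non-degeneracy $[ra_{j'j}]_{j'}\neq 0$, which is precisely what allows the adjoint action of the $h$-generators to shift the loop grading one step at a time across the diagram.
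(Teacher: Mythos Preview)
Your proof is correct and follows essentially the same two-step strategy as the paper: first establish $\U_{i}\subseteq\A$, then propagate along the connected Dynkin diagram using the relation $[h_{j',r},\xpm_{j,m}] = \pm\frac{[ra_{j'j}]_{j'}}{r}C^{\frac{r\mp|r|}{2}}\xpm_{j,r+m}$ together with the recursive recovery of $h_{j,r}$ from commutators $[\xp_{j,s},\xm_{j,0}]$. The only difference is cosmetic: for the first step you invoke the rank-one isomorphism $\U_{i}\cong\UaffA$ via (the analogue of) $h_{i}$ to conclude $\U_{i}\subseteq\A$ in one stroke, whereas the paper carries out the computation directly inside $\Utor$, extracting $h_{i,\pm 1}$ from $[\xp_{i,-1},\xm_{i,0}]$ and $[\xp_{i,0},\xm_{i,1}]$ and then building up all $\xpm_{i,m}$ and $h_{i,r}$ by hand --- but this is exactly the content of Jing's isomorphism for $\UaffA$ unpacked, so the two arguments are the same in substance.
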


\begin{proof}
    Let $A$ be the subalgebra of $\Utor$ generated by $\Uh$, $\xpm_{i,\mp 1}$ and $C^{\pm 1}$.
    Our strategy is to first show that $\U_{i}$ is contained in $A$, and then to show that $\xpm_{j,\mp 1} \in A$ whenever $a_{ij}<0$, since the result then follows from the connectedness of the Dynkin diagram.
    Unpacking the formula in Definition~\ref{quantum toroidal algebra definition}, we see that $\phi^{\pm}_{i,0} = k_{i}^{\pm 1}$, $\phi^{\pm}_{i,\pm r} = 0$ for $r<0$, and
    \begin{align*}
        \phi^{\pm}_{i,\pm r} = k_{i}^{\pm 1} \sum_{\ell=1}^{r} \frac{(\pm 1)^{\ell}(q_{i}-q_{i}^{-1})^{\ell}}{\ell!} \sum_{\substack{r_{1}+\dots+r_{\ell}=r \\ \mathrm{all~}r_{j}>0}} h_{i,\pm r_{1}}\dots h_{i,\pm r_{\ell}}
    \end{align*}
    for $r>0$.
    This implies that $h_{i,-1} = C^{-1} k_{i} [\xp_{i,-1},\xm_{i,0}]$ and $h_{i,1} = C k_{i}^{-1} [\xp_{i,0},\xm_{i,1}]$.
    Thus by the relations of the quantum toroidal algebra, all $\xpm_{i,m}$ lie inside $A$.
    We also have
    \begin{equation*}
        [\xp_{i,\pm r},\xm_{i,0}] =
            \pm \frac{C^{\frac{r\mp r}{2}} k_{i}^{\pm 1} }{q_{i}-q_{i}^{-1}} \sum_{\ell=1}^{r} \frac{(\pm 1)^{\ell}(q_{i}-q_{i}^{-1})^{\ell}}{\ell!} \sum_{\substack{r_{1}+\dots+r_{\ell}=r \\ \mathrm{all~}r_{j}>0}} h_{i,\pm r_{1}}\dots h_{i,\pm r_{\ell}}
    \end{equation*}
    when $r>0$ and so all $h_{i,\pm r}\in A$ by induction.
    Therefore $\U_{i}\subset A$ and we conclude our proof by noting that $\xpm_{j,\mp 1} = \pm \frac{C^{\pm 1}}{[a_{ij}]_{i}} [h_{i,\mp 1},\xpm_{j,0}]$ whenever $a_{ij}<0$.
\end{proof}

\begin{cor} \label{toroidal generated by horizontal and vertical}
    The quantum toroidal algebra is generated by its horizontal and vertical subalgebras.
\end{cor}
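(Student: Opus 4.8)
The plan is to deduce this directly from Proposition~\ref{toroidal generated by horizontal and Ui}, which carries all the weight. That proposition asserts that for \emph{any} single index $i\in I$, the algebra $\Utor$ is generated by the horizontal subalgebra $\Uh$ together with the elements $\xpm_{i,\mp 1}$ and $C^{\pm 1}$. So it suffices to exhibit one index $i$ for which all of these generators already lie inside the subalgebra $\langle\Uh,\Uv\rangle$ generated by the horizontal and vertical subalgebras; the reverse containment $\langle\Uh,\Uv\rangle\subseteq\Utor$ is automatic.

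First I would choose any $i\in I_{0}$, which is possible since $I_{0}=\lbrace 1,\dots,n\rbrace$ is nonempty. For such $i$ the elements $\xpm_{i,\mp 1}$, namely $\xp_{i,-1}$ and $\xm_{i,1}$, are among the images $\xpm_{i,m}$ (with $i\in I_{0}$, $m\in\mathbb{Z}$) of the vertical homomorphism $v$, and hence belong to $\Uv$. Likewise $C^{\pm 1}=v(C^{\pm 1})\in\Uv$, and trivially $\Uh\subseteq\langle\Uh,\Uv\rangle$. Thus the entire generating set furnished by the proposition lies in $\langle\Uh,\Uv\rangle$, giving
\begin{align*}
    \Utor = \langle\Uh,\xpm_{i,\mp 1},C^{\pm 1}\rangle \subseteq \langle\Uh,\Uv\rangle \subseteq \Utor,
\end{align*}
and hence equality.

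There is essentially no obstacle remaining: the substantive step—reconstructing the full copy $\U_{i}$ of $\UaffA$ from the single pair $\xpm_{i,\mp 1}$ and then propagating to adjacent nodes along the Dynkin diagram—was already carried out in the proof of the preceding proposition. The only point that requires a moment's care is the choice $i\in I_{0}$ rather than $i=0$: for the affine node $i=0$ the element $\xpm_{0,\mp 1}$ is not visibly contained in $\Uv$, whereas for an index attached to the finite part the vertical embedding supplies the needed generators immediately.
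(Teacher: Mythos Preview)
Your argument is correct and is precisely the intended one: the paper states this as an immediate corollary of Proposition~\ref{toroidal generated by horizontal and Ui} without further proof, and you have spelled out exactly the obvious deduction (pick $i\in I_{0}$ so that $\xpm_{i,\mp 1}$ and $C^{\pm 1}$ lie in $\Uv$). Your remark about why $i=0$ would not work is a nice clarification but not strictly needed.
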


Throughout the rest of this paper, we shall require the following standard automorphisms and anti-automorphisms of $\Utor$.

\begin{itemize}
    \item Every automorphism $\pi\in\Omega$ of the underlying affine Dynkin diagram gives rise to an automorphism $\Scal_{\pi}$ defined by
    \begin{align*}
        &\Scal_{\pi}(\xpm_{i,m}) = o_{i,\pi(i)}^{m}\xpm_{\pi(i),m}, \quad \Scal_{\pi}(k_{i}) = k_{\pi(i)}, \\
        &\Scal_{\pi}(h_{i,r}) = o_{i,\pi(i)}^{r}h_{\pi(i),r}, \quad \Scal_{\pi}(C) = C,
    \end{align*}
    which restricts to $S_{\pi}$ on $\Uh$.
    \item There is the anti-involution $\eta$ with
    \begin{align*}
        &\eta(\xpm_{i,m}) = \xpm_{i,-m}, \quad \eta(h_{i,r}) = -C^{r} h_{i,-r}, \quad \eta(k_{i}) = k_{i}^{-1}, \quad \eta(C) = C,
    \end{align*}
    which restricts to $\eta'$ on $\Uv$ and $\sigma$ on $\Uh$.
    \item For each $i\in I$ there is an automorphism $\X_{i}$ given by
    \begin{align*}
        &\X_{i}(\xpm_{j,m}) = o(j)^{\delta_{ij}} \xpm_{j,m\mp\delta_{ij}}, \quad \X_{i}(k_{j}) = C^{-\delta_{ij}} k_{j}, \\
        &\X_{i}(h_{j,r}) = h_{j,r}, \quad \X_{i}(C) = C.
    \end{align*}
    If $i\in I_{0}$ then $\X_{i}$ restricts to $\Xb_{i}$ on $\Uv$, while $\X_{0}$ restricts to the identity.
\end{itemize}

\subsection{Extended double affine braid groups} \label{Extended double affine braid group section}

Just as the quantum toroidal algebra $\Utor$ is in some sense formed by fusing together its horizontal and vertical quantum affine subalgebras, we can similarly define the extended double affine braid group $\Bdd$ by combining the Coxeter and Bernstein presentations for $\Bd$. \\

Recall from Section \ref{Extended affine braid group subsection} that $\Omega$ acts naturally on the affine braid group $\B = \langle T_{i} : i\in I\rangle$.
There is also a linear action of $\Omega$ on $P^{\vee}$ given by $\pi(\lambda_{i}^{\vee}) = \lambda_{\pi(i)}^{\vee}$, which preserves $\Pov \subset P^{\vee}$ and thus defines an action on $\lbrace X_{\beta} : \beta \in \Pov \rbrace$.
These actions are compatible with relations (\ref{first extended affine Bernstein}) and (\ref{second extended affine Bernstein}) for $N = \Pov$ (extended to all $i\in I$) and so the following is well-defined.

\begin{defn}
    The extended double affine braid group $\Bdd$ is generated by the affine braid group $\B = \langle T_{i} : i\in I\rangle$, the lattice $\lbrace X_{\beta} : \beta \in \Pov \rbrace$ and the group $\Omega$, subject to the relations
    \begin{itemize}
        \item $T_{i}X_{\beta} = X_{\beta}T_{i} \mathrm{~if~} (\beta,\alpha_{i}) = 0$,
        \item $T_{i}^{-1}X_{\beta}T_{i}^{-1} = X_{s_{i}(\beta)} \mathrm{~if~} (\beta,\alpha_{i}) = 1$,
        \item $\pi T_{i} \pi^{-1} = T_{\pi(i)}$,
        \item $\pi X_{\beta} \pi^{-1} = X_{\pi(\beta)}$.
    \end{itemize}
\end{defn}

\begin{rmk}
    \begin{enumerate}
        \item The action of $W$ on $\Pov$ in the definition above is with respect to the embedding $\Pov \hookrightarrow P^{\vee}$ of type $X_{n}^{(r)}$ rather than $Z_{n}^{(1)}$.
        \item Our group $\Bdd$ is the quotient of the $X,Y$-extended double affine Artin group of Ion and Sahi \cite{IS20}*{Chapter 9} by its central element $X_{\frac{1}{m}\delta}$.
    \end{enumerate}
\end{rmk}

It is clear that $\Bdd$ contains two extended affine braid subgroups which together generate the entire group: a horizontal subgroup $\Bh$ of type $X_{n}^{(r)}$ generated by $\B$ and $\Omega$, and a vertical subgroup $\Bv$ of type $Z_{n}^{(1)}$ generated by $T_{1},\dots,T_{n}$ and $\lbrace X_{\beta} : \beta \in \Pov \rbrace$.
We remark that there only exists an isomorphism between $\Bh$ and $\Bv$ which acts by the identity on $\B_{0} \cong \Bh \cap \Bv$ in the untwisted case.
\begin{figure}
    \centering
\begin{tikzpicture}[scale=1]
    \node at (-0.8,0.35) {$\Omega ~~ T_{0}^{\pm 1}$};
    \node at (1.5,0.35) {$T_{1}^{\pm 1}$};
    \node at (1.5,1.5) {$X_{\omega_{1}^{\vee}}$};
    \node at (1.5,-0.8) {$X_{-\omega_{1}^{\vee}}$};
    \node at (1.5,2.4) {$\vdots$};
    \node at (1.5,-1.5) {$\vdots$};
    \node at (3,0.35) {$\cdots$};
    \node at (3,1.5) {$\cdots$};
    \node at (3,-0.8) {$\cdots$};
    \node at (4.5,0.35) {$T_{n}^{\pm 1}$};
    \node at (4.5,1.5) {$X_{\omega_{n}^{\vee}}$};
    \node at (4.5,-0.8) {$X_{-\omega_{n}^{\vee}}$};
    \node at (4.5,2.4) {$\vdots$};
    \node at (4.5,-1.5) {$\vdots$};
    \node at (-1.8,0.7) {$\color{blue} \Bh$};
    \node at (0.6,2.55) {$\color{red} \Bv$};
    \draw[draw=blue] (-2.1,-0.25) rectangle ++(7.8,1.2);
    \draw[draw=red] (0.3,-2.1) rectangle ++(5.5,4.9);
    \draw[draw=black] (-2.2,-2.2) rectangle ++(8.1,5.1);
\end{tikzpicture}
\caption{\hspace{.5em}Illustration of $\Bdd$ and its extended affine braid subgroups $\Bh$ and $\Bv$}
\label{Bdd illustration}
\end{figure}
\\

From Section \ref{Extended affine braid group subsection} we know that $\Bh$ and $\Bv$ each have both Coxeter and Bernstein presentations -- Table~\ref{extended double affine braid group table} summarises our choice of notation.
\begin{table}
\centering
\begin{tabular}{ |c||c|c| }
 \hline
  & Coxeter generators & Bernstein generators \\
 \hline
 & & \\[-10.5pt]
 \hline
 $\Bh$
 &
 \begin{tabular}{@{}c@{}}$T_{1},\dots,T_{n}$ \\[1pt] $T_{0} = T_{s_{\theta}}^{-1} Y_{-\beta_{\theta}}$ \\[1pt] $\Omega = \lbrace \pi_{i} = Y_{\beta_{i}}T_{v_{i}^{-1}} : i\in\Imin \rbrace$\end{tabular}
 &
 \begin{tabular}{@{}c@{}}$T_{1},\dots,T_{n}$ \\[1pt] $\lbrace Y_{\mu} : \mu \in N \rbrace$\end{tabular}
 \\[15pt]
 \hline
& & \\[-11pt]
 $\Bv$
 &
 \begin{tabular}{@{}c@{}}$T_{1},\dots,T_{n}$ \\[1pt] $T^{v}_{0} = X_{\theta^{\vee}} T_{s_{\theta}}^{-1}$ \\[1pt] $\Omega^{v} = \lbrace \rho_{i} = X_{\omega_{i}^{\vee}}T_{v_{i}}^{-1} : i\in\Imin \rbrace$\end{tabular}
 &
 \begin{tabular}{@{}c@{}}$T_{1},\dots,T_{n}$ \\[1pt] $\lbrace X_{\beta} : \beta \in \Pov \rbrace$\end{tabular}
 \\[14pt]
 \hline
\end{tabular}
\caption{\hspace{.5em}Coxeter and Bernstein generators for $\Bh$ and $\Bv$}\label{extended double affine braid group table}
\end{table}
In particular, for $\Bh$ we use the alternative Bernstein presentation of Remark \ref{alternative Bernstein presentation} so that while the $X_{\beta}$ satisfy relations (\ref{first extended affine Bernstein}) and (\ref{second extended affine Bernstein}) with $T_{0},\dots,T_{n}$, the $Y_{\mu}$ satisfy relations (\ref{first alternative extended affine Bernstein}) and (\ref{second alternative extended affine Bernstein}) with $T^{v}_{0},T_{1},\dots,T_{n}$.
Note that in all untwisted types, each $\pi_{i}$ and $\rho_{i}$ correspond to the same outer automorphism of the affine Dynkin diagram.
\\

We conclude this subsection with several automorphisms of $\Bdd$ which will be important in Section \ref{quantum toroidal automorphism section}.
For ease of notation, we restrict to the untwisted case since this is all we shall require.

\begin{itemize}
    \item There is an involution $\te$ which inverts $T_{1},\dots,T_{n}$ and interchanges $X_{\beta}$ and $Y_{\beta}$ for all $\beta\in\Pov$.
    It follows that $\te$ exchanges each $\pi_{i}$ and $\rho_{i}$, as well as $T_{0}$ and $(T^{v}_{0})^{-1}$.
    It is equal to the composition of the anti-involution $\e$ of Ion and Sahi \cite{IS20}*{Chapter 9} with the anti-automorphism that inverts every element.
    When restricted to the natural copy of the (non-extended) double affine braid group inside $\Bdd$, which is generated by $\B = \langle T_{0},\dots,T_{n} \rangle$ and $\lbrace X_{\beta} : \beta \in \Qov \rbrace$, this is the involution of Ion \cite{Ion03}*{Theorem 2.2}.
    \item There exists an involution $\gv$ inverting $T_{0},\dots,T_{n}$ and all $X_{\beta}$, while fixing each element of $\Omega$.
    Similarly, there is an involution $\gh = \te\circ\gv\circ\te$ which inverts $T^{v}_{0},T_{1},\dots,T_{n}$ and all $Y_{\mu}$ but fixes each element of $\Omega^{v}$.
\end{itemize}

\subsection{Braid group actions on quantum toroidal algebras} \label{Toroidal Braid Group Action Section}

Here we construct actions of the extended double affine braid groups on the quantum toroidal algebras.
We start with a simplified presentation of $\Utor$ involving finitely many generators and relations, which allows us to define automorphisms $\T_{i}$ for each $i\in I$.
Note that our proof relies upon a finite presentation of each subalgebra $\U_{ij}$ that comes from the Drinfeld-Jimbo presentation of the quantum affine algebras.
So in fact, our results extend to all quantum affinizations where the underlying Dynkin diagram has at most triple arrows, ie. $a_{ij}a_{ji}\leq 3$ for all distinct $i,j\in I$.
In particular, we exclude the quantum toroidal algebras of types $A_{1}^{(1)}$ and $A_{2}^{(2)}$ for the remainder of this paper.

\begin{lem} \label{lemma for new toroidal presentation}
    For $X$ of type $A_{2}$, $C_{2}$ or $G_{2}$, let $(a_{ij})_{i,j=1,2}$ be the corresponding finite type Cartan matrix and take $q_{1}$ and $q_{2}$ as in Section \ref{Preliminaries}.
    Define $\A_{X}$ to be the $\kk$-algebra with generators $\hat{x}^{\pm}_{i,0}$, $\hat{x}^{\pm}_{i,\mp 1}$, $\hat{k}_{i}^{\pm 1}$, $\hat{C}^{\pm 1}$ ($i=1,2$) and relations
    \begin{enumerate}[label={(\roman*)}]
    \item\label{AX 1} $\hat{C}^{\pm 1}$ central,
    \item\label{AX 2} $\displaystyle \hat{C}^{\pm 1}\hat{C}^{\mp 1} = \hat{k}_{i}^{\pm 1} \hat{k}_{i}^{\mp 1} = 1$,
    \item\label{AX 3} $\displaystyle [\hat{k}_{i},\hat{k}_{j}] = 0$,
    \item\label{AX 4} $\displaystyle \hat{k}_{i} \hat{x}^{\pm}_{j,m} \hat{k}_{i}^{-1} = q_{i}^{\pm a_{ij}} \hat{x}^{\pm}_{j,m}$,
    \item\label{AX 5} $\displaystyle [\hat{x}^{+}_{i,0},\hat{x}^{-}_{j,0}] = \frac{\delta_{ij}}{q_{i}-q_{i}^{-1}} (\hat{k}_{i} - \hat{k}_{i}^{-1})$,
    \item\label{AX 6} $\displaystyle [\hat{x}^{+}_{i,-1},\hat{x}^{-}_{i,1}] = \frac{\hat{C}^{-1}\hat{k}_{i} - \hat{C} \hat{k}_{i}^{-1}}{q_{i}-q_{i}^{-1}}$,
    \item\label{AX 7} $\displaystyle [\hat{x}^{+}_{i,0},\hat{x}^{-}_{j,1}] = [\hat{x}^{+}_{i,-1},\hat{x}^{-}_{j,0}] = 0$ whenever $i\not= j$,
    \item\label{AX 8} $\displaystyle [\hat{x}^{+}_{i,0},\hat{x}^{+}_{i,-1}]_{q_{i}^{2}} = [\hat{x}^{-}_{i,1},\hat{x}^{-}_{i,0}]_{q_{i}^{-2}} = 0$,
    \item\label{AX 9} $[\hat{x}^{+}_{1,0},\hat{x}^{+}_{2,-1}]_{q_{1}^{a_{12}}} + [\hat{x}^{+}_{2,0},\hat{x}^{+}_{1,-1}]_{q_{1}^{a_{12}}} = 0$,
    \item\label{AX 10} $[\hat{x}^{-}_{1,1},\hat{x}^{-}_{2,0}]_{q_{1}^{-a_{12}}} + [\hat{x}^{-}_{2,1},\hat{x}^{-}_{1,0}]_{q_{1}^{-a_{12}}} = 0$,
    \item\label{AX 11} $\displaystyle
    \sum_{s=0}^{1 - a_{ij}} (-1)^{s}
    {\begin{bmatrix}1 - a_{ij}\\s\end{bmatrix}}_{i}
    y_{i}^{s} y_{j} y_{i}^{1 - a_{ij}-s} = 0$  whenever $i\not= j$,
\end{enumerate}
for $(y_{i},y_{j}) = (\hat{x}^{\pm}_{i,0},\hat{x}^{\pm}_{j,0}),(\hat{x}^{\pm}_{i,\mp 1},\hat{x}^{\pm}_{j,0}),(\hat{x}^{\pm}_{i,0},\hat{x}^{\pm}_{j,\mp 1})$. \\

Then there is an algebra homomorphism $U_{q}(X^{(1)}) \rightarrow \A_{X}$ mapping
\begin{align*}
    C \mapsto \hat{C}, \quad
    \xpm_{i,0} \mapsto \hat{x}^{\pm}_{i,0}, \quad
    \xpm_{i,\mp 1} \mapsto \hat{x}^{\pm}_{i,\mp 1}, \quad
    k_{i} \mapsto \hat{k}_{i},
\end{align*}
for $i=1,2$.
\end{lem}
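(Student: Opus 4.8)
The forward direction is the non-trivial one: the reverse assignment $\A_X\to U_q(X^{(1)})$ is immediate, since the truncated Drinfeld new generators of $U_q(X^{(1)})$ visibly satisfy \ref{AX 1}--\ref{AX 11} as special cases of the relations in Definition \ref{quantum affinization definition}; but to map \emph{out} of $U_q(X^{(1)})$ one must respect all of its infinitely many Drinfeld new relations. Since a finite relation check is what is available, the plan is to route through the Drinfeld-Jimbo presentation of $U_q(X^{(1)})$, which for the affine rank-two types $A_2^{(1)},C_2^{(1)},G_2^{(1)}$ is finite, and then to transport the resulting map across Jing's isomorphism.

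First I would define a candidate homomorphism $\tilde\phi$ from the Drinfeld-Jimbo realization of $U_q(X^{(1)})$, with affine index set $\{0,1,2\}$, into $\A_X$. On the Chevalley generators I set $\xip\mapsto\hat x^+_{i,0}$, $\xim\mapsto\hat x^-_{i,0}$ and $t_i\mapsto\hat k_i$ for $i\in\{1,2\}$, set $t_0\mapsto\hat C\hat k_\theta^{-1}$ with $\hat k_\theta=\hat k_1^{a_1}\hat k_2^{a_2}$, and send $\xp_0,\xm_0$ to the nested $q$-commutators of Jing's realization, now written in $\hat x^-_{i,1}$ and $\hat x^+_{i,-1}$ respectively. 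This is well-posed because, for these rank-two types, Jing's realization of $\xp_0$ (respectively $\xm_0$) is a nested $q$-commutator whose innermost generator is $\xm_{i_1,1}$ (respectively $\xp_{i_1,-1}$) while all remaining generators are of the form $\xpm_{j,0}$; hence only the generators $\xpm_{i,0}$ and $\xpm_{i,\mp 1}$ occur, and these are exactly the ones present in $\A_X$, so each image genuinely lies in $\A_X$.

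Next I would verify that $\tilde\phi$ respects the finitely many affine Chevalley-Serre relations. Since Jing already proved, using the formulas of \eqref{Jing's isomorphism condition}, that these nested-commutator elements satisfy those relations inside $U_q(X^{(1)})$ as consequences of the Drinfeld new relations, the task is to check that his derivations use only Drinfeld new relations among the truncated generators and to re-run them in $\A_X$. Every intermediate expression is a product of $\hat x^{\pm}_{i,0}$ and $\hat x^{\pm}_{i,\mp 1}$ — no generator of loop degree exceeding one in absolute value, and no $h_{i,r}$, ever appears — so relations \ref{AX 4}--\ref{AX 7} (the torus action and the $[\,\xp,\xm\,]$ commutators), \ref{AX 8}--\ref{AX 10} (reordering the two loop degrees within and across the two colours), and \ref{AX 11} (the truncated Serre relations) are precisely the tools needed. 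Composing $\tilde\phi$ with the inverse of Jing's isomorphism then gives a homomorphism $\phi\colon U_q(X^{(1)})\to\A_X$; the values on $\xpm_{i,0}$ and $k_i$ are immediate, $\phi(C)=\hat C$ follows from $C=t_0 k_\theta$, and $\phi(\xpm_{i,\mp 1})=\hat x^{\pm}_{i,\mp 1}$ follows from a short computation inverting Jing's bracket (choosing the auxiliary sequence so the relevant node is the innermost index), using \ref{AX 8}--\ref{AX 10}.

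The main obstacle, I expect, is the verification of the relations attached to the affine node $0$: the Serre relations between $0$ and its neighbour and the defining commutator $[\xp_0,\xm_0]\propto(t_0-t_0^{-1})$. These are where the bookkeeping of Jing's nested brackets becomes heavy, most acutely in type $G_2^{(1)}$, where the length-five brackets turn even a low-degree Serre relation into a lengthy $q$-commutator identity. A further subtlety is that completing these manipulations legally inside $\A_X$ may first require deriving a few auxiliary vanishing identities among the truncated generators — for instance commutators such as $[\xp_{i,-1},\xm_{j,1}]$ for $i\neq j$ — that are not explicitly among the generating relations \ref{AX 1}--\ref{AX 11}.
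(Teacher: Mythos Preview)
Your proposal is correct and follows essentially the same route as the paper: both define a map from the Drinfeld--Jimbo presentation of $U_q(X^{(1)})$ into $\A_X$ using Jing's nested $q$-commutator formulas for the node-$0$ generators, verify the finitely many Chevalley--Serre relations inside $\A_X$, and then transport across Jing's isomorphism to obtain the desired map on Drinfeld new generators. The only minor technical difference is in the final step of showing $\phi(\xpm_{i,\mp 1}) = \xhpm_{i,\mp 1}$: the paper expresses $\xpm_{i,\mp 1}$ in Drinfeld--Jimbo terms via Beck's extended affine braid group action (writing $X_{\omega_i^\vee}$ in Coxeter generators) and then checks the image under $\xi$, whereas you propose to invert Jing's bracket directly by choosing the sequence with $i_1 = i$; both are valid, though the braid-group route is perhaps more systematic for the $G_2$ case.
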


\begin{proof}
    Similar to Jing's isomorphism between the two realizations of the quantum affine algebra as outlined in Section \ref{quantum affine algebra subsection}, introduce elements
    \begin{align*}
        &\hat{k}_{0} = \hat{C} \hat{k}_{1}^{-a_{1}}\hat{k}_{2}^{-a_{2}}, \\
        &\hat{x}^{+}_{0,0} = \left[\hat{x}^{-}_{i_{h-1},0},\dots,\hat{x}^{-}_{i_{2},0},\hat{x}^{-}_{i_{1},1}\right]_{q^{\epsilon_{1}}\dots q^{\epsilon_{h-2}}} \hat{C} \hat{k}_{1}^{-a_{1}}\hat{k}_{2}^{-a_{2}}, \\
        &\hat{x}^{-}_{0,0} = a(-q)^{-\epsilon} \hat{C}^{-1} \hat{k}_{1}^{a_{1}}\hat{k}_{2}^{a_{2}} \left[\hat{x}^{+}_{i_{h-1},0},\dots,\hat{x}^{+}_{i_{2},0},\hat{x}^{+}_{i_{1},-1}\right]_{q^{\epsilon_{1}}\dots q^{\epsilon_{h-2}}},
    \end{align*}
    in $\A_{X}$ where $\underline{i} = (i_{1},\dots,i_{h-1})$ and $\underline{\epsilon} = (\epsilon_{1},\dots,\epsilon_{h-2})$ satisfy (\ref{Jing's isomorphism condition}).
    By relations \ref{AX 9}-\ref{AX 10} this is independent of the choice of sequences.
    Since the Drinfeld-Jimbo presentation of $U_{q}(X^{(1)})$ has finitely many relations, it is a finite check to prove that $\xpm_{i} \mapsto \hat{x}^{\pm}_{i,0}$ and $t_{i}^{\pm 1} \mapsto \hat{k}_{i}^{\pm 1}$ for $i = 0,1,2$ defines an algebra homomorphism $\xi : U_{q}(X^{(1)}) \rightarrow \A_{X}$.
    Note that it is immediate that $\xi$ preserves all relations with non-zero indices.
    Then to complete our proof we must express $C$, $\xpm_{1,\mp 1}$ and $\xpm_{2,\mp 1}$ in terms of the Drinfeld-Jimbo generators of $U_{q}(X^{(1)})$ and verify that the images under $\xi$ are equal to $\hat{C}$, $\hat{x}^{\pm}_{1,\mp 1}$ and $\hat{x}^{\pm}_{2,\mp 1}$ respectively (it is trivial that $C = t_{0}t_{1}^{a_{1}}t_{2}^{a_{2}}$ maps to $\hat{C} = \hat{k}_{0}\hat{k}_{1}^{a_{1}}\hat{k}_{2}^{a_{2}}$).
    See Appendix~\ref{Proof of lemma} for more details.
\end{proof}

Using this lemma we are able to prove the following simplified presentation for all quantum affinizations where the underlying Dynkin diagram has at most triple arrows.

\begin{prop} \label{simpler toroidal presentation}
    Let $\s$ be a symmetrizable Kac-Moody algebra with generalised Cartan matrix $(a_{ij})_{i,j\in I}$ and suppose that $a_{ij}a_{ji} \leq 3$ for all distinct $i,j\in I$.
    Then the quantum affinization $\widehat{U_{q}(\s)}$ has a finite presentation with generators $\lbrace \xpm_{i,0}, \xpm_{i,\mp 1}, k_{i}^{\pm 1}, C^{\pm 1} : i\in I \rbrace$ and relations
\begin{enumerate}[label={(\roman*)}]
    \item\label{simpler toroidal relation 1} $C^{\pm 1}$ central,
    \item $\displaystyle C^{\pm 1}C^{\mp 1} = k_{i}^{\pm 1} k_{i}^{\mp 1} = 1$,
    \item $\displaystyle [k_{i},k_{j}] = 0$,
    \item\label{simpler toroidal relation 4} $\displaystyle k_{i} \xpm_{j,m} k_{i}^{-1} = q_{i}^{\pm a_{ij}} \xpm_{j,m}$,
    \item\label{simpler toroidal relation 5} $\displaystyle [\xp_{i,0},\xm_{j,0}] = \frac{\delta_{ij}}{q_{i}-q_{i}^{-1}} (k_{i} - k_{i}^{-1})$,
    \item\label{simpler toroidal relation 6} $\displaystyle [\xp_{i,-1},\xm_{j,1}] = \delta_{ij} \frac{C^{-1}k_{i} - C k_{i}^{-1}}{q_{i}-q_{i}^{-1}}$,
    \item\label{simpler toroidal relation 7} $\displaystyle [\xp_{i,0},\xm_{j,1}] = [\xp_{i,-1},\xm_{j,0}] = 0$ whenever $i\not= j$,
    \item\label{simpler toroidal relation 8} $\displaystyle [\xp_{i,0},\xp_{i,-1}]_{q_{i}^{2}} = [\xm_{i,1},\xm_{i,0}]_{q_{i}^{-2}} = 0$,
    \item\label{simpler toroidal relation 9} $[\xp_{i,0},\xp_{j,-1}]_{q_{i}^{a_{ij}}} + [\xp_{j,0},\xp_{i,-1}]_{q_{i}^{a_{ij}}} = 0$ whenever $a_{ij} < 0$,
    \item\label{simpler toroidal relation 10} $[\xm_{i,1},\xm_{j,0}]_{q_{i}^{-a_{ij}}} + [\xm_{j,1},\xm_{i,0}]_{q_{i}^{-a_{ij}}} = 0$ whenever $a_{ij} < 0$,
    \item\label{simpler toroidal relation 11} $\displaystyle
    \sum_{s=0}^{1 - a_{ij}} (-1)^{s}
    {\begin{bmatrix}1 - a_{ij}\\s\end{bmatrix}}_{i}
    y_{i}^{s} y_{j} y_{i}^{1 - a_{ij}-s} = 0$  whenever $i\not= j$,
\end{enumerate}
for $(y_{i},y_{j}) = (x^{\pm}_{i,0},x^{\pm}_{j,0}),(x^{\pm}_{i,\mp 1},x^{\pm}_{j,0}),(x^{\pm}_{i,0},x^{\pm}_{j,\mp 1})$.
\end{prop}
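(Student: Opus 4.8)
The plan is to show that the finitely presented algebra, call it $\mathcal{B}$, with the generators and relations (i)--(xi) above is isomorphic to the quantum affinization $\widehat{U_{q}(\s)}$. Since the relations (i)--(xi) all hold inside $\widehat{U_{q}(\s)}$ (they are special cases of the defining relations in Definition~\ref{quantum affinization definition}, together with the consequences computed in the proof of Proposition~\ref{toroidal generated by horizontal and Ui}), there is a surjective homomorphism $\mathcal{B} \twoheadrightarrow \widehat{U_{q}(\s)}$ by Proposition~\ref{toroidal generated by horizontal and Ui}, which tells us that $\widehat{U_{q}(\s)}$ is generated by $\Uh$ together with the $\xpm_{i,\mp 1}$ and $C^{\pm 1}$, and hence by our finite generating set. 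The real content is the reverse direction: every defining relation of $\widehat{U_{q}(\s)}$ must be recovered from (i)--(xi). Equivalently, I must construct an inverse homomorphism $\widehat{U_{q}(\s)} \to \mathcal{B}$, which amounts to defining all the higher generators $\xpm_{i,m}$ (for $|m| \geq 1$) and $h_{i,r}$ inside $\mathcal{B}$ and verifying that they satisfy the full infinite list of Drinfeld new relations.

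The strategy is to build the missing generators recursively. First I would define $h_{i,\pm 1}$ inside $\mathcal{B}$ via the formulae already appearing in Proposition~\ref{toroidal generated by horizontal and Ui}, namely $h_{i,-1} = C^{-1}k_{i}[\xp_{i,-1},\xm_{i,0}]$ and $h_{i,1} = Ck_{i}^{-1}[\xp_{i,0},\xm_{i,1}]$, and then use the bracket relation $[h_{i,\pm 1}, \xpm_{j,m}]$ to climb up and define all the remaining $\xpm_{j,m}$ and $h_{i,r}$ by induction on $|m|$ and $|r|$. This gives a candidate homomorphism, and the task reduces to checking that the Drinfeld new relations involving these derived generators are consequences of (i)--(xi). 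The essential reduction is that \emph{every} Drinfeld new relation involves at most two distinct indices $i,j$, so it suffices to verify the relations within each rank-two (or rank-one) subalgebra $\U_{ij}$. Here Lemma~\ref{lemma for new toroidal presentation} does the heavy lifting: it produces, for each pair $i \neq j$ with $a_{ij}a_{ji} \in \{1,2,3\}$, a homomorphism from the full Drinfeld-Jimbo (hence, via Jing's isomorphism and its twisted analogue, the full Drinfeld new) presentation of the corresponding rank-two affine algebra into the algebra $\A_{X}$ generated by exactly the finite data $\xpm_{i,0},\xpm_{i,\mp 1},k_{i}^{\pm 1},C^{\pm 1}$.

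Concretely, I would argue case by case on $a_{ij}a_{ji}$. When $a_{ij}a_{ji}=0$ the indices commute and $\U_{ij} \cong \UaffA \times \UaffA$ reduces to two copies of the rank-one computation; the rank-one relations are exactly those recovered inside Lemma~\ref{lemma for new toroidal presentation}'s proof (and already encoded in relations \ref{simpler toroidal relation 5}, \ref{simpler toroidal relation 6}, \ref{simpler toroidal relation 8}). When $a_{ij}a_{ji} \in \{1,2,3\}$, $\U_{ij}$ is isomorphic to $\UaffAA$, $\UaffC$ or $\UaffG$ respectively, and Lemma~\ref{lemma for new toroidal presentation} guarantees that the finite relations (i)--(xi), restricted to the indices $i,j$, suffice to generate the entire (infinite) Drinfeld new presentation of that rank-two affinization. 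Thus the two competing presentations of each $\U_{ij}$ agree, and assembling these local identifications over all pairs shows that the relations in $\mathcal{B}$ force all relations of $\widehat{U_{q}(\s)}$.

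The main obstacle is making the recursive construction of the higher generators well-defined and consistent: one must confirm that the inductively defined $\xpm_{j,m}$ and $h_{i,r}$ do not depend on the route taken (for example, whether $\xpm_{j,m+1}$ is reached via $[h_{i,1}, -]$ or through the mixed Serre-type relations \ref{simpler toroidal relation 9}--\ref{simpler toroidal relation 10}), and that the bracket $[h_{i,r}, h_{j,s}]$ and the cross relations $[\xp_{i,m}, \xm_{j,l}]$ come out correctly for \emph{all} $m,l,r,s$ and not merely the base cases hard-coded in (v)--(vii). This is precisely where having the genuine isomorphism theorem for each rank-two affine algebra---that is, the equivalence of the Drinfeld-Jimbo and Drinfeld new presentations established by Beck, Jing, Jing--Zhang and Damiani---is indispensable: it certifies \emph{a priori} that the full infinite relation set of $\U_{ij}$ is a formal consequence of the finite generating relations, so no divergent infinite computation is required. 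Provided Lemma~\ref{lemma for new toroidal presentation} is in hand, the remaining work is the bookkeeping of gluing the rank-two results together along the connected Dynkin diagram, which I expect to be routine once the two-index reduction is set up.
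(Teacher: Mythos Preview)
Your proposal is correct and follows essentially the same approach as the paper: reduce to rank-two subalgebras $\U_{ij}$, invoke Lemma~\ref{lemma for new toroidal presentation} for each adjacent pair, and glue. The only difference is one of framing. You describe the inverse map via a recursive construction of the higher generators $\xpm_{j,m}$ and $h_{i,r}$ inside $\mathcal{B}$, and then worry about route-independence of that recursion. The paper sidesteps this entirely: rather than building generators, it directly invokes Lemma~\ref{lemma for new toroidal presentation} to produce, for each adjacent pair $i,j$, a homomorphism $p_{ij}:\U_{ij}\to\mathcal{B}$ (since $\U_{ij}$ is already known to be a genuine quantum affine algebra of type $A_{2}^{(1)}$, $C_{2}^{(1)}$ or $G_{2}^{(1)}$). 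The only compatibility check needed is that the restriction $p_{i}=p_{ij}|_{\U_{i}}$ is independent of the neighbour $j$, which is immediate because $\U_{i}$ is generated by $\xpm_{i,0},\xpm_{i,\mp 1},k_{i}^{\pm 1},C^{\pm 1}$ (Proposition~\ref{toroidal generated by horizontal and Ui}) and all $p_{ij}$ agree on these by construction; together with the trivial observation that $[\mathrm{im}(p_{i}),\mathrm{im}(p_{j})]=0$ when $a_{ij}=0$, this assembles into a global map $g:\widehat{U_{q}(\s)}\to\mathcal{B}$. This packaging makes your ``main obstacle'' disappear, since the route-independence you flag is absorbed into the single statement that $p_{ij}|_{\U_{i}}$ depends only on $i$.
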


\begin{proof}
    Let $\Hcal$ be the algebra generated by $\lbrace \hat{x}^{\pm}_{i,0}, \hat{x}^{\pm}_{i,\mp 1}, \hat{k}_{i}^{\pm 1}, \hat{C}^{\pm 1} : i\in I \rbrace$, subject to relations \ref{simpler toroidal relation 1}-\ref{simpler toroidal relation 11} above with hats over each generator.
    It is clear that
    \begin{align*}
    \hat{C} \mapsto C, \quad
    \hat{x}^{\pm}_{i,0} \mapsto \xpm_{i,0}, \quad
    \hat{x}^{\pm}_{i,\mp 1} \mapsto \xpm_{i,\mp 1}, \quad
    \hat{k}_{i} \mapsto k_{i},
\end{align*}
defines a homomorphism $f:\Hcal \rightarrow \Utor$.
Let us build its inverse out of morphisms $\U_{ij} \rightarrow \Hcal$.
By Lemma~\ref{lemma for new toroidal presentation}, we have for each pair of adjacent nodes $i,j\in I$ an algebra homomorphism $p_{ij} : \U_{ij} \rightarrow \Hcal$ given by
\begin{align*}
    C \mapsto \hat{C}, \quad
    \xpm_{\ell,0} \mapsto \hat{x}^{\pm}_{\ell,0}, \quad
    \xpm_{\ell,\mp 1} \mapsto \hat{x}^{\pm}_{\ell,\mp 1}, \quad
    k_{\ell} \mapsto \hat{k}_{\ell},
\end{align*}
for $\ell=i,j$.
Then $p_{i} = p_{ij}\vert_{\U_{i}} = p_{ji}\vert_{\U_{i}}$ is well-defined and independent of $j$, since our proof of Proposition~\ref{toroidal generated by horizontal and Ui} shows that $\U_{i}$ is generated by $\xpm_{i,0}$, $\xpm_{i,\mp 1}$, $k_{i}^{\pm 1}$ and $C^{\pm 1}$.
Furthermore, it is immediate from the presentation of $\Hcal$ that $[\mathrm{im}(p_{i}),\mathrm{im}(p_{j})] = 0$ whenever $a_{ij} = 0$.
So we see that
\begin{align*}
    C \mapsto \hat{C}, \quad
    \xpm_{i,0} \mapsto p_{i}(\xpm_{i,0}), \quad
    \xpm_{i,\mp 1} \mapsto p_{i}(\xpm_{i,\mp 1}), \quad
    k_{i} \mapsto p_{i}(k_{i}),
\end{align*}
for all $i\in I$ defines an algebra homomorphism $g:\Utor \rightarrow \Hcal$, since all relations of $\Utor$ are contained in some $\U_{ij}$.
We have $f\circ g = \mathrm{id}$ by checking on Drinfeld-Jimbo generators of each $\U_{ij}$ with $a_{ij}<0$, and $g\circ f = \mathrm{id}$ by checking on generators of $\Hcal$.
\end{proof}

\begin{rmk}
    \begin{enumerate}
        \item This result gives a finite Drinfeld new style presentation for the quantum toroidal algebra $\Utor$ in all untwisted and twisted types except $A_{1}^{(1)}$ and $A_{2}^{(2)}$, as well as for all untwisted quantum affine algebras $\Uaff$.
        \item The relations in Proposition \ref{simpler toroidal presentation} are a subset of those in the original definition for $\widehat{U_{q}(\s)}$ which only involve the generators $\xpm_{i,0}$, $\xpm_{i,\mp 1}$, $k_{i}^{\pm 1}$, $C^{\pm 1}$ for each $i\in I$.
        In particular, we do not have `shadows' of other relations appearing in our simplified presentation.
    \end{enumerate}
\end{rmk}

Recall that in Section \ref{Automorphisms of Uaff section} we obtained formulae for $\Tb_{i}(\xpm_{j,m})$ when $i,j\in I_{0}$ and $m = 0,\mp 1$.
Thanks to Proposition~\ref{simpler toroidal presentation}, these are enough to define the remaining automorphisms $\T_{i}$ that are required for our action of $\Bdd$ on $\Utor$.

\begin{prop} \label{Ti properties}
    For each $i\in I$ there exists an automorphism $\T_{i}$ of $\Utor$ such that
    \begin{itemize}
        \item $\T_{i} h = h \emph{\Tb}_{i}$ for all $i\in I$,
        \item $\T_{i} v = v \emph{\Tb}_{i}$ for all $i\in I_{0}$,
        \item $\T_{i}^{-1} = \eta\T_{i}\eta$ for all $i\in I$.
    \end{itemize}
\end{prop}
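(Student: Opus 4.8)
The plan is to invoke the finite presentation of Proposition~\ref{simpler toroidal presentation}: it suffices to prescribe $\T_{i}$ on the finite generating set $\lbrace \xpm_{j,0}, \xpm_{j,\mp 1}, k_{j}^{\pm 1}, C^{\pm 1} : j\in I\rbrace$ and then to check that the proposed images respect relations \ref{simpler toroidal relation 1}--\ref{simpler toroidal relation 11}. On the index-zero generators together with $k_{j}$ and $C$ I would let $\T_{i}$ act through the horizontal copy, setting $\T_{i}(\xpm_{j,0}) = h(\Tb_{i}(\xjpm))$, $\T_{i}(k_{j}) = k_{j}k_{i}^{-a_{ij}}$ and $\T_{i}(C) = C$; each image is a polynomial in the index-zero generators and so lies in $\Uh$. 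On the remaining $\xpm_{j,\mp 1}$ I would use the explicit Lusztig formulas recorded in Section~\ref{Automorphisms of Uaff section}: the $q$-bracket expressions when $i\neq j$ are adjacent, the degree-two recursive expressions when $i=j$ (via $\U_{i}\cong\UaffA$), and $\T_{i}(\xpm_{j,\mp 1}) = \xpm_{j,\mp 1}$ when $a_{ij}=0$. Since these formulas depend only on the local Cartan data of the nodes $i,j$, they make sense for every $i\in I$, including the affine node, and each image lies in $\U_{ij}$.

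The heart of the argument is well-definedness. Every relation in Proposition~\ref{simpler toroidal presentation} is supported on at most two nodes, hence lies in some $\U_{jk}$. If $i$ is one of these two nodes, then $\U_{ik}$ is one of the quantum affine algebras $\UaffA, \UaffAA, \UaffC, \UaffG$, and by the previous paragraph the restriction of $\T_{i}$ coincides with the genuine Lusztig--Beck automorphism $\Tb_{i}$ of this algebra (node $i$ being one of its finite nodes), which preserves the relation. If $i$ lies outside $\lbrace j,k\rbrace$ but is adjacent to one of them, the relation lies in $\U_{ijk}$ for three distinct nodes; since every proper subdiagram of an affine Dynkin diagram is of finite type, each connected component of the subdiagram on $\lbrace i,j,k\rbrace$ is finite type as soon as $\lbrace i,j,k\rbrace\neq I$, so $\U_{ijk}$ is a quantum affine algebra (or a commuting product of such) and the honest automorphism $\Tb_{i}$ there settles the relation. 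If finally $i$ is adjacent to neither $j$ nor $k$, then $\T_{i}$ fixes every generator of $\U_{jk}$ and there is nothing to check. The consistency of these restrictions on overlaps is precisely the compatibility $\Tb_{i}|_{\U_{i}}$ used to glue the local maps $p_{ij}$ in the proof of Proposition~\ref{simpler toroidal presentation}, and so produces a well-defined endomorphism $\T_{i}$.

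For invertibility I would define $\T_{i}^{-1}$ by the identical recipe with $\Tb_{i}$ replaced throughout by $\Tb_{i}^{-1}$, and verify $\T_{i}\T_{i}^{-1} = \T_{i}^{-1}\T_{i} = \mathrm{id}$ on generators, which reduces to $\Tb_{i}\Tb_{i}^{-1} = \mathrm{id}$ inside each $\U_{ik}$. The relation $\T_{i}h = h\Tb_{i}$ is then immediate from the definition on the horizontal generators, since $\Uh$ is generated by them and preserved by $\T_{i}$. For $i\in I_{0}$, the relation $\T_{i}v = v\Tb_{i}$ holds because the chosen formulas on $\xpm_{j,0}$, $k_{j}$, $C$ and $\xpm_{j,\mp 1}$ with $j\in I_{0}$ are exactly $v(\Tb_{i}(-))$, while $\Uv$ is generated by $\lbrace\xpm_{j,0},\xpm_{j,\mp 1},k_{j},C : j\in I_{0}\rbrace$ by Proposition~\ref{simpler toroidal presentation} applied to $\Uv$. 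Finally $\eta\T_{i}\eta$ is an algebra homomorphism agreeing with $\T_{i}^{-1}$ on every generator: on $\Uh$ this is the identity $\sigma\Tb_{i}\sigma = \Tb_{i}^{-1}$, while on each $\xpm_{j,\mp 1}$ one restricts to $\U_{ij}$, where $\eta$ becomes the Drinfeld new anti-involution $\eta'$ and $\eta'\Tb_{i}\eta' = \Tb_{i}^{-1}$ (valid for node $i$ viewed as a finite node of $\U_{ij}$, and in particular for $i=0$).

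The principal obstacle is the well-definedness step, and within it two points deserve care. First, one must confirm that the formulas of Section~\ref{Automorphisms of Uaff section}, derived in the vertical copy for $i,j\in I_{0}$, really describe the Lusztig automorphism $\Tb_{i}$ on each rank-two subalgebra $\U_{ij}$ under Jing's isomorphism, so that they may be applied verbatim with the affine node $0$ in either role. Second, and more seriously, the reduction to proper quantum affine subalgebras breaks down exactly when $\lbrace i,j,k\rbrace = I$, that is for the rank-two affine toroidal algebras ($A_{2}^{(1)}, C_{2}^{(1)}, G_{2}^{(1)}$ and twisted analogues); here no braid automorphism of a proper subalgebra is available and the relevant relations must be verified directly from the explicit generator formulas. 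I expect this finite but intricate base-case computation to be the main difficulty. In the triangle $A_{2}^{(1)}$ the three nodes are permuted by $\Omega\cong\mathbb{Z}_{3}$, so compatibility of $\T_{i}$ with the automorphisms $\Scal_{\pi}$ should reduce the check to a single node, whereas $C_{2}^{(1)}$ and $G_{2}^{(1)}$ lack this symmetry and will require a hands-on verification against the $q$-Serre type relations \ref{simpler toroidal relation 9}--\ref{simpler toroidal relation 11}.
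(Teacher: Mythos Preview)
Your plan matches the paper's proof closely: define $\T_{i}$ on the finite generating set of Proposition~\ref{simpler toroidal presentation} via the formulas from Section~\ref{Automorphisms of Uaff section}, verify the relations by restricting to $\U_{ij\ell}$ (which is a product of quantum affine algebras whenever $D_{ij\ell}$ is of finite type), and deduce invertibility together with $\T_{i}^{-1}=\eta\T_{i}\eta$ by restricting to each $\U_{ij}$ and invoking $\Tb_{i}^{-1}=\eta'\Tb_{i}\eta'$. The identities $\T_{i}h=h\Tb_{i}$ and $\T_{i}v=v\Tb_{i}$ also follow as you indicate.

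The one substantive difference is your handling of the base case $\lbrace i,j,\ell\rbrace=I$ (so $|I|=3$ and $\U_{ij\ell}=\Utor$ itself). You anticipate a direct hands-on verification of relations \ref{simpler toroidal relation 9}--\ref{simpler toroidal relation 11} and flag it as the main difficulty; the paper bypasses this computation entirely. The point is that in this situation necessarily $i\notin\lbrace j,\ell\rbrace$, so $\T_{i}$ commutes with both $\X_{j}$ and $\X_{\ell}$. The relations of $\U_{j\ell}$ involving only $\xpm_{j,0},\xpm_{\ell,0},k_{j}^{\pm 1},k_{\ell}^{\pm 1}$ lie in the horizontal subalgebra $(\U_{ij\ell})_{h}$, on which $\T_{i}$ restricts to the Lusztig automorphism $\Tb_{i}$ of the Drinfeld--Jimbo quantum group $U_{q}(\gaff)$, and are therefore preserved. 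Every remaining relation of $\U_{j\ell}$ in Proposition~\ref{simpler toroidal presentation} is obtained from one of these index-zero relations by applying $\X_{j}$, $\X_{\ell}$, or $\X_{j}\X_{\ell}$; commutativity with $\T_{i}$ then transports the index-zero verification to the shifted one. This gives a uniform argument across $A_{2}^{(1)}$, $C_{2}^{(1)}$, $G_{2}^{(1)}$ and their twisted analogues, with no case-by-case computation required.
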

\begin{proof}
    For each $i\in I$, we define the morphism $\T_{i} : \Utor \rightarrow \Utor$ by
    \begin{align*}
        & \T_{i}(C) = C, \quad
        \T_{i}(k_{j}) = k_{j}k_{i}^{-a_{ij}}, \\
        & \T_{i}(\xp_{i,0}) = - \xm_{i,0} k_{i}, \quad
        \T_{i}(\xm_{i,0}) = - k_{i}^{-1} \xp_{i,0}, \\
        & \T_{i}(\xp_{i,-1}) = \sum_{s=0}^{2} (-1)^{s}q_{i}^{s}(\xm_{i,0})^{(s)}\xp_{i,-1}(\xm_{i,0})^{(2-s)} k_{i}, \\
        & \T_{i}(\xm_{i,1}) = k_{i}^{-1} \sum_{s=0}^{2} (-1)^{s}q_{i}^{-s}(\xp_{i,0})^{(2-s)}\xm_{i,1}(\xp_{i,0})^{(s)}, \\
        & \T_{i}(\xp_{j,m}) = \sum_{s=0}^{-a_{ij}} (-1)^{s} q_{i}^{-s} (\xp_{i,0})^{(-a_{ij}-s)} \xp_{j,m} (\xp_{i,0})^{(s)} \mathrm{~if~} i\not= j, \\
        & \T_{i}(\xm_{j,m}) = \sum_{s=0}^{-a_{ij}} (-1)^{s} q_{i}^{s} (\xm_{i,0})^{(s)} \xm_{j,m} (\xm_{i,0})^{(-a_{ij}-s)} \mathrm{~if~} i\not= j.
    \end{align*}
    To verify that $\T_{i}$ is a well-defined homomorphism, we need to show that it preserves every relation in our simplified presentation of $\Utor$.
    For each $j,\ell\in I$ consider the relations lying inside $\U_{j\ell}$.
    These are also relations of the subalgebra $\U_{ij\ell}$, to which we may restrict since from the formulae above it is preserved by $\T_{i}$.
    Note that $\U_{ij\ell}$ is isomorphic to the quantum affinization of the quantum group associated to the full Dynkin subdiagram $D_{ij\ell}$ on the nodes $i,j,\ell$. \\
    
    If $D_{ij\ell}$ is a subdiagram of some finite Dynkin diagram, then $\U_{ij\ell}$ is isomorphic to a direct product of quantum affine algebras.
    Furthermore, $\T_{i}\vert_{\U_{ij\ell}}$ acts by $\Tb_{i}$ on the factor containing $\U_{i}$ and by the identity on all other factors.
    Since this is an automorphism of $\U_{ij\ell}$, we see that $\T_{i}$ preserves all relations lying inside $\U_{j\ell}$. \\

    Otherwise, $D_{ij\ell}$ is the Dynkin diagram of type $A_{2}^{(1)}$, $C_{2}^{(1)}$, $G_{2}^{(1)}$, $A_{4}^{(2)}$, $D_{3}^{(2)}$ or $D_{4}^{(3)}$ and $\U_{ij\ell}$ is isomorphic to the corresponding quantum toroidal algebra.
    Since by definition $\T_{i}$ restricts to $\Tb_{i}$ on the horizontal subalgebra $(\U_{ij\ell})_{h}$, it preserves all relations lying inside it.
    Any other relation can then be obtained from one of these by applying $\X_{j}$, $\X_{\ell}$ or $\X_{j}\X_{\ell}$, which commute with $\T_{i}$ since $i\not= j,\ell$. \\
    
    Hence we may conclude that $\T_{i}$ respects all relations in our simplified presentation, and is thus an algebra homomorphism.
    The first two bullet points in the statement of the proposition are then immediate from the formulae for $\Tb_{i}$ on the Drinfeld-Jimbo and Drinfeld new generators of $\Uaff$ in Section \ref{Automorphisms of Uaff section}. \\

    To show that $\T_{i}$ is an automorphism with inverse $\eta\T_{i}\eta$, it suffices to check this on the invariant subspace $\U_{ij}$ for each $j\not= i$.
    If $a_{ij}<0$ then $\U_{ij}$ is isomorphic to $\UaffAA$, $\UaffC$ or $\UaffG$, and $\T_{i}$ and $\eta$ restrict to $\Tb_{i}$ and $\eta'$.
    If $a_{ij} = 0$ then $\U_{ij} \cong \U_{i}\times\U_{j}$ and $\T_{i}$ and $\eta$ restrict to $\Tb_{i}\times\mathrm{id}$ and $\eta'\times\eta'$.
    In either case, since $(\Tb_{i})^{-1} = \eta'\Tb_{i}\eta'$ our proof is complete.
\end{proof}

We now have all of the automorphisms required to define our braid group action on $\Utor$.
However, in type $A_{2n}^{(1)}$ we are forced to consider a slightly modified version of $\Bdd$.
First, we must have that $\pi_{1}\in\Omega$ has order $4n+2$ rather than $2n+1$.
This is because, as discussed in Section \ref{Preliminaries}, there is no length function on the affine Dynkin diagram and so
\begin{align*}
    &\Scal_{\pi_{1}}^{2n+1}(\xpm_{i,m}) = (-1)^{m}\xpm_{i,m}, \quad \Scal_{\pi_{1}}^{2n+1}(k_{i}) = k_{i}, \\
    &\Scal_{\pi_{1}}^{2n+1}(h_{i,r}) = (-1)^{r}h_{i,r}, \quad \Scal_{\pi_{1}}^{2n+1}(C) = C,
\end{align*}
has order two.
Let $\zeta_{i}$ be the automorphism mapping each $\xpm_{i,m} \mapsto -\xpm_{i,m}$ and fixing the other generators.
Then we have
\begin{align*}
    \Scal_{\pi_{1}} \zeta_{i} \Scal_{\pi_{1}}^{-1} = \zeta_{\pi_{1}(i)}, \quad
    \Scal_{\pi_{1}} \X_{2n} \Scal_{\pi_{1}}^{-1} = \zeta_{0}\X_{0}, \quad
    \T_{0}^{-1} \X_{0} \T_{0}^{-1} = \zeta_{0}\X_{2n}\X_{0}^{-1}\X_{1},
\end{align*}
and we adjust the relations of $\Bdd$ accordingly.
The involutions $\te$, $\gv$ and $\gh$ extend naturally to our modified braid group, and our results are not otherwise impacted.
The proof of the next theorem is virtually the same as for the other cases (we shall not include the minor differences) and there is only a slight change in Lemma~\ref{lemma for psi theorem}.

\begin{thm} \label{toroidal braid group action theorem}
    The extended double affine braid group $\Bdd$ acts on the quantum toroidal algebra $\Utor$ via
    $T_{i} \mapsto \T_{i}$ for all $i\in I$,
    $X_{\omega_{i}^{\vee}} \mapsto \Z_{\omega_{i}^{\vee}} := \X_{i}\X_{0}^{-a_{i}}$ for all $i\in I_{0}$,
    and $\pi \mapsto \Scal_{\pi}$ for all $\pi\in \Omega$.
\end{thm}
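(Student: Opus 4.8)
The plan is to verify that the assigned automorphisms satisfy the defining relations of $\Bdd$, leveraging the known extended affine actions of Theorem~\ref{Beck's affine result} on the horizontal and vertical copies of $\Uaff$ together with the generation statements of Proposition~\ref{toroidal generated by horizontal and Ui} and Corollary~\ref{toroidal generated by horizontal and vertical}. First I would record the facts to be used repeatedly: $\T_{i}|_{\Uh} = \Tb_{i}$ and $\Scal_{\pi}|_{\Uh} = S_{\pi}$; for $i\in I_{0}$, $\T_{i}|_{\Uv} = \Tb_{i}$ and (since $\X_{i}|_{\Uv} = \Xb_{i}$ while $\X_{0}|_{\Uv} = \mathrm{id}$) $\Z_{\omega_{i}^{\vee}}|_{\Uv} = \Xb_{i}$; that the $\X_{i}$ pairwise commute, so that $X_{\beta}\mapsto\Z_{\beta}$ extends multiplicatively from the basis $\lbrace\omega_{i}^{\vee}\rbrace$ of $\Pov$ to a well-defined homomorphism on the lattice; and that $\X_{\ell}$ commutes with $\T_{i}$ whenever $\ell\neq i$, while $\xpm_{\ell,\mp 1} = o(\ell)\X_{\ell}(\xpm_{\ell,0})$ with $\xpm_{\ell,0}\in\Uh$.

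I would then split the defining relations of $\Bdd$ into three families. The horizontal relations — the affine braid relations among the $\T_{i}$, the conjugation $\Scal_{\pi}\T_{i}\Scal_{\pi}^{-1} = \T_{\pi(i)}$, and the group law of $\Scal$ — hold on $\Uh$ by Beck--Lusztig. Each such relation moves only finitely many nodes, so since $|I|\geq 3$ (we have excluded $A_{1}^{(1)}$ and $A_{2}^{(2)}$) I may fix a spectator node $\ell$ outside that support; applying the relation to $\xpm_{\ell,\mp 1}$ and using the commutation of $\X_{\ell}$ with the $\T$'s appearing (and $\Scal_{\pi}\X_{\ell}\Scal_{\pi}^{-1}=\pm\X_{\pi(\ell)}$ in the conjugation case) pulls $\X_{\ell}$ outside and reduces the check to $\xpm_{\ell,0}\in\Uh$, where it already holds. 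Since every $\T_{i}$ and $\Scal_{\pi}$ fixes $C$, Proposition~\ref{toroidal generated by horizontal and Ui} then yields the relation on all of $\Utor$.

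The vertical Bernstein relations between the finite generators $T_{i}$ ($i\in I_{0}$) and the lattice — namely $\T_{i}\Z_{\beta} = \Z_{\beta}\T_{i}$ when $(\beta,\alpha_{i}) = 0$, and $\T_{i}^{-1}\Z_{\beta}\T_{i}^{-1} = \Z_{s_{i}(\beta)}$ when $(\beta,\alpha_{i}) = 1$ — restrict on $\Uv$ to exactly the extended affine relations satisfied by $\Tb_{i}$ and $\Xb_{i}$, hence hold there. To promote them to $\Utor$ it remains, by Corollary~\ref{toroidal generated by horizontal and vertical}, to check them on the node-$0$ generators $\xpm_{0,0}$ and $k_{0}^{\pm 1}$; this is a direct computation from the explicit formulae, simplified by the fact that $\T_{i}$ and $\X_{j}$ ($i,j\in I_{0}$) commute with $\X_{0}$.

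The remaining relations are the genuinely new ones that belong to neither affine subgroup: the Bernstein relations attaching the affine node $\T_{0}$ to the lattice $\Z_{\beta}$, and the conjugation $\Scal_{\pi}\Z_{\beta}\Scal_{\pi}^{-1} = \Z_{\pi(\beta)}$. These I would verify directly from the definitions of $\T_{0}$, the $\X_{i}$ and $\Scal_{\pi}$, translating the reflection $s_{0}(\beta)$ (respectively the permutation $\pi(\beta)$) into the corresponding composite of the degree-shifting operators $\X_{i}$, and again reducing, via the generation statements, to the finitely many generators not lying in the relevant affine subalgebra. I expect the relation of the form $\T_{0}^{-1}\Z_{\beta}\T_{0}^{-1} = \Z_{s_{0}(\beta)}$ to be the main obstacle, since it genuinely couples the horizontal affine node to the vertical lattice and cannot be read off from either affine action. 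The type $A_{2n}^{(1)}$ case needs separate care: the absence of a length function forces the modified group $\Bdd$ (with $\pi_{1}$ of order $4n+2$) and the sign-correcting automorphisms $\zeta_{i}$, which enter precisely this $\T_{0}$--lattice relation as displayed before the theorem.
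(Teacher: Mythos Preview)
Your plan is correct and would go through, but it organises the $\T_{i}$--lattice relations differently from the paper, and this is exactly where your ``main obstacle'' arises. Rather than separating a vertical family (for $i\in I_{0}$, reducible to $\Uv$) from a hard residual case at $i=0$, the paper works directly with the larger family of degree-shift automorphisms $\X_{j}$ for \emph{all} $j\in I$ and proves two uniform ``Coxeter-type'' relations: $\T_{i}\X_{j} = \X_{j}\T_{i}$ for $i\neq j$ (immediate from the formulae) and $\T_{i}^{-1}\X_{i}\T_{i}^{-1} = \X_{i}\prod_{j\in I}\X_{j}^{-a_{ij}}$. The Bernstein relations for $\Z_{\beta}$ with all $\T_{i}$ then follow formally. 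The second relation is checked on each $\U_{\ell}$ by restricting to the rank-two subalgebra $\U_{i\ell}$, which---once $A_{1}^{(1)}$ and $A_{2}^{(2)}$ are excluded---is always a product of genuine quantum affine algebras, so Theorem~\ref{Beck's affine result} applies. This treats $i=0$ on exactly the same footing as the finite nodes and dissolves your main obstacle: the $\T_{0}$--$\Z_{\beta}$ relation reduces to Beck's affine action on each $\U_{0\ell}$. Your horizontal/vertical split is conceptually natural but leaves you without a subalgebra on which both $\T_{0}$ and $\Z_{\beta}$ are already known to satisfy the relation, forcing the direct computation you anticipate.

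For the braid relations among the $\T_{i}$ the two approaches coincide in substance: the paper restricts to $\U_{ij\ell}$ and, when that three-node subdiagram is affine (so $\ell\neq i,j$), uses precisely your spectator-node trick---commute $\X_{\ell}$ past $\T_{i}$ and $\T_{j}$ and reduce to $\xpm_{\ell,0}\in\Uh$. The paper additionally notes that when $D_{ij\ell}$ is finite the relation holds on the whole of $\U_{ij\ell}$ via Beck directly, but your single-spectator argument together with Proposition~\ref{toroidal generated by horizontal and Ui} already suffices.
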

\begin{proof}
    The relations between $\T_{i}$ and $\Z_{\beta}$ follow from the Coxeter relations between $\T_{i}$ and $\X_{j}$, namely that $\T_{i}\X_{j} = \X_{j}\T_{i}$ whenever $i\not=j$ and $\T_{i}^{-1}\X_{i}\T_{i}^{-1} = \X_{i}\prod_{j\in I}\X_{j}^{-a_{ij}}$.
    Commutativity of $\T_{i}$ and $\X_{j}$ for $i\not= j$ is clear from the definitions, while the other relation is checked on each $\U_{\ell}$ by restricting to $\U_{i\ell}$ and applying Theorem~\ref{Beck's affine result}, since $\U_{i\ell}$ is a product of quantum affine algebras. \\
    
    To verify the braid relation between $\T_{i}$ and $\T_{j}$ on elements of $\U_{\ell}$, we restrict to the invariant subalgebra $\U_{ij\ell}$.
    Similarly to our proof of Proposition~\ref{Ti properties}, if $D_{ij\ell}$ is a subdiagram of a finite Dynkin diagram then $\U_{ij\ell}$ is a product of quantum affine algebras and we can use the braid relation between $\Tb_{i}$ and $\Tb_{j}$ from Theorem~\ref{Beck's affine result}.
    Otherwise, $\U_{ij\ell}$ is isomorphic to the quantum toroidal algebra of untwisted type $A_{2}$, $C_{2}$ or $G_{2}$. We already have the braid relation on $\xpm_{\ell,0}$ and $k_{\ell}^{\pm 1}$ using $\T_{i}h = h\Tb_{i}$ and $\T_{j}h = h\Tb_{j}$.
    Applying $\X_{\ell}$, which commutes with $\T_{i}$ and $\T_{j}$ since $\ell\not= i,j$, we derive the braid relation on all of $\U_{\ell}$.
    The remaining relations follow from the definitions without much difficulty.
\end{proof}

\begin{rmk} \label{braid restriction remark}
    \begin{enumerate}
        \item Our extended double affine braid group action restricts to both an action of $\B_{h}$ on $\Uh$ and an action of $\B_{v}$ on $\Uv$, each of which coincides with Lusztig and Beck's action of the extended affine braid group on the quantum affine algebra.
        \item In their PhD thesis, motivated by trying to obtain a Damiani-Beck style isomorphism on the toroidal level, Mounzer \cite{Mounzer22} provides a topological braid group action on a certain completion of $\Utor$ (verifying the quantum Serre relations is a work in progress in some cases).
        We note that this action does not restrict to the quantum toroidal algebra and is thus distinct from our results.
    \end{enumerate}
\end{rmk}

It is worth highlighting that these results extend naturally to the quantum affinizations $\widehat{U_{q}(\s)}$ considered in Proposition \ref{simpler toroidal presentation}, namely those with $a_{ij}a_{ji}\leq 3$ for all distinct $i,j\in I$.
In particular, for each $i\in I$ there exists an automorphism $\T_{i}$ of $\widehat{U_{q}(\s)}$ defined exactly as in Proposition \ref{Ti properties}, with inverse $\T_{i}^{-1} = \eta\T_{i}\eta$, which restricts to $\Tb_{i}$ on the horizontal copy of $U_{q}(\s)$.
Furthermore, for some appropriate generalisation of $\Bdd$ we obtain a braid group action as in Theorem \ref{toroidal braid group action theorem}.
In each case, the proofs are the same as above.

\begin{defn}
    For any generalised Cartan matrix $(a_{ij})_{i,j\in I}$ we define $\widehat{\B}$ to be the group generated by $\lbrace T_{i},X_{i} : i\in I \rbrace$ and the automorphism group $\Omega$ of the associated Dynkin diagram, with relations
    \begin{itemize}
        \item $T_{i}T_{j}T_{i}\ldots = T_{j}T_{i}T_{j}\ldots$ whenever $a_{ij}a_{ji}\leq 3$, where there are $a_{ij}a_{ji} + 2$ factors on each side,
        \item $X_{i}X_{j} = X_{j}X_{i}$,
        \item $T_{i}X_{j} = X_{j}T_{i}$ whenever $i\not= j$,
        \item $T_{i}^{-1}X_{i}T_{i}^{-1} = X_{i} \prod_{j\in I}X_{j}^{-a_{ij}}$,
        \item $\pi T_{i} \pi^{-1} = T_{\pi(i)}$,
        \item $\pi X_{i} \pi^{-1} = X_{\pi(i)}$,
    \end{itemize}
    for all $i,j\in I$ and $\pi\in\Omega$.
\end{defn}

Note that the extended double affine braid group $\Bdd$ embeds inside the corresponding $\widehat{\B}$ by sending $T_{i} \mapsto T_{i}$, $X_{\omega_{i}^{\vee}} \mapsto X_{i}X_{0}^{-a_{i}}$ and $\pi \mapsto \pi$ for each $i\in I$ and $\pi\in\Omega$.
\\

When the underlying Dynkin diagram possesses a length function $o$, by defining $\X_{i}$ and $\Scal_{\pi}$ exactly as for $\Utor$ in Section \ref{quantum toroidal algebras subsection}, we obtain the following `braid group action' on $\widehat{U_{q}(\s)}$.

\begin{thm}
    The group $\widehat{\B}$ acts on the quantum affinization $\widehat{U_{q}(\s)}$ via $T_{i} \mapsto \T_{i}$ and $X_{i} \mapsto \X_{i}$ for all $i\in I$, and $\pi \mapsto \Scal_{\pi}$ for all $\pi\in \Omega$.
\end{thm}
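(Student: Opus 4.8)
The plan is to mirror the proof of Theorem~\ref{toroidal braid group action theorem}. The maps $\T_{i}$, $\X_{i}$ and $\Scal_{\pi}$ are already automorphisms of $\widehat{U_{q}(\s)}$ -- the first by the extension of Proposition~\ref{Ti properties} noted above, and the latter two defined verbatim as for $\Utor$ using the chosen length function $o$ -- so it remains only to verify that the assignment respects each defining relation of $\widehat{\B}$. The two relations involving $\Omega$, namely $\Scal_{\pi}\T_{i}\Scal_{\pi}^{-1}=\T_{\pi(i)}$ and $\Scal_{\pi}\X_{i}\Scal_{\pi}^{-1}=\X_{\pi(i)}$, I would check directly on the generators $\xpm_{j,m}$, $h_{j,r}$, $k_{j}^{\pm 1}$, $C^{\pm 1}$, where both sides relabel indices along $\pi$ and carry matching powers of $o_{j,\pi(j)}$; this is routine bookkeeping. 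The commutation relations $\X_{i}\X_{j}=\X_{j}\X_{i}$ and, for $i\neq j$, $\T_{i}\X_{j}=\X_{j}\T_{i}$ are immediate from the explicit formulae, since $\X_{j}$ only shifts the loop grading of the index-$j$ generators and rescales $k_{j}$, while $\T_{i}$ is supported on the nodes adjacent to $i$ and intertwines the $\X_{j}$-action uniformly in the loop degree.

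For the relation $\T_{i}^{-1}\X_{i}\T_{i}^{-1}=\X_{i}\prod_{j\in I}\X_{j}^{-a_{ij}}$, I would use that $\X_{j}$ restricts to the identity on $\U_{\ell}$ whenever $j\neq\ell$, so that it suffices to check the identity on each subalgebra $\U_{\ell}$. Restricting both sides to the $\T_{i}$-invariant subalgebra $\U_{i\ell}$ -- which, since $a_{i\ell}a_{\ell i}\leq 3$ forces $D_{i\ell}$ to be of finite type, is a product of quantum affine algebras -- the claim becomes exactly the extended-affine Bernstein relation for $\Tb_{i}$ and $\Xb_{i}$, and thus follows from Lusztig and Beck's action (Theorem~\ref{Beck's affine result}).

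The substantive relation is the braid relation $\T_{i}\T_{j}\T_{i}\cdots=\T_{j}\T_{i}\T_{j}\cdots$ with $a_{ij}a_{ji}+2$ factors, which I would again test on each $\U_{\ell}$ after restricting to the invariant subalgebra $\U_{ij\ell}$, the quantum affinization of the rank-$\leq 3$ quantum group attached to $D_{ij\ell}$. When $D_{ij\ell}$ is of finite type, $\U_{ij\ell}$ is a product of quantum affine algebras and the relation for $\T_{i},\T_{j}$ reduces to that for $\Tb_{i},\Tb_{j}$ via Theorem~\ref{Beck's affine result}. The hard case is when $D_{ij\ell}$ is not of finite type, so that $\U_{ij\ell}$ is a genuine quantum affinization: here I would first obtain the relation on the horizontal generators $\xpm_{\ell,0}$ and $k_{\ell}^{\pm 1}$ from $\T_{i}h=h\Tb_{i}$, $\T_{j}h=h\Tb_{j}$ together with Lusztig's braid relation for $\Tb_{i},\Tb_{j}$ on $U_{q}(\s)$ (which holds precisely because $a_{ij}a_{ji}\leq 3$), and then propagate it to every loop degree of $\U_{\ell}$ by conjugating with $\X_{\ell}$, which commutes with both $\T_{i}$ and $\T_{j}$ since $\ell\neq i,j$. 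This reduction to the horizontal subalgebra followed by spreading the relation across the $\mathbb{Z}$-grading via $\X_{\ell}$ is where the real work lies, and is the main obstacle.
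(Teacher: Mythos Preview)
Your proposal is correct and follows exactly the approach the paper intends: the paper does not give a separate proof of this theorem, instead stating that ``the proofs are the same as above'' (i.e.\ as for Theorem~\ref{toroidal braid group action theorem}), and your write-up faithfully spells out that argument relation by relation. The only minor point worth noting is that in the general setting $D_{ij\ell}$ need not be an affine Dynkin diagram when it fails to be of finite type, but your argument does not use this --- you only need $\ell\neq i,j$ (forced since $D_{ij}$ itself is of finite type) so that $\X_{\ell}$ commutes with $\T_{i},\T_{j}$, which you correctly observe.
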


If instead no such $o$ exists and the Dynkin diagram contains an odd length cycle, this should instead hold for a modified version of $\widehat{\B}$ as was the case in type $A_{2n}^{(1)}$.

\section[Automorphisms and anti-automorphisms of quantum toroidal \texorpdfstring{\\}{} algebras]{Automorphisms and anti-automorphisms of \\ quantum toroidal algebras} \label{quantum toroidal automorphism section}

We now look to construct certain automorphisms and anti-involutions of $\Utor$ which exchange the horizontal and vertical subalgebras.
Our current method relies on certain properties of the root system, in particular the $a_{i}$ labels, and so for the remainder of this paper we restrict our focus to the (untwisted) simply laced cases.
We shall return to the other types in future work.
For notational simplicity, we will henceforth identify elements of $\Bdd$ with the corresponding automorphisms of $\Utor$ from Theorem~\ref{toroidal braid group action theorem}.
We shall also write $X_{i}$ for $X_{\omega_{i}^{\vee}}$ and $Y_{i}$ for $Y_{\omega_{i}^{\vee}}$ for each $i\in I_{0}$. \\

Our approach is roughly as follows.
We can in some sense build $\Utor$ out of the copy of the finite quantum group $\Uq$ lying inside $\Uh\cap\Uv$ and the braid group action from Theorem~\ref{toroidal braid group action theorem}.
Twisting the action by certain automorphisms of $\Bdd$ obtains a different `twisted' set of generators for $\Utor$.
Then mapping the standard generators to their twisted counterparts gives our desired (anti-)automorphisms. \\

More specifically, each generator of our simplified presentation of $\Utor$ from Proposition~\ref{simpler toroidal presentation} (other than $C^{\pm 1}$) can easily be written as $b(z)$ for some $b\in\Bdd$ and $z\in\Uq$.
For all $\xpm_{i,0}$ and $k_{i}^{\pm 1}$ with $i\in I_{0}$ we may set $b=1$, and for the other generators we have
\begin{itemize}
    \item $\xpm_{i,\mp 1} = o(i) X_{i}(\xpm_{i,0})$ for each $i\in I_{0}$,
    \item $\xpm_{0,0} = \pi_{j}^{-1}(\xpm_{j,0})$ for any $j\in\Imin\setminus\lbrace 0\rbrace$,
    \item $\xpm_{0,\mp 1} = o(0) \pi_{j}^{-1} X_{j}(\xpm_{j,0})$ for any $j\in\Imin\setminus\lbrace 0\rbrace$,
    \item $k_{0}^{\pm 1} = \pi_{j}^{-1}(k_{j}^{\pm 1})$ for any $j\in\Imin\setminus\lbrace 0\rbrace$,
\end{itemize}
outside of type $E_{8}^{(1)}$, where since $\Omega$ is trivial we instead write
\begin{itemize}
    \item $\xpm_{i,\mp 1} = o(i) X_{i}(\xpm_{i,0})$ for each $i\in I_{0}$,
    \item $\xpm_{0,0} = T_{1}T_{0}(\xpm_{1,0})$,
    \item $\xpm_{0,\mp 1} = o(0) X_{\beta} T_{1}T_{0}(\xpm_{1,0})$ whenever $(\beta,\alpha_{0}) = 1$,
    \item $k_{0}^{\pm 1} = T_{1}T_{0}(k_{1}^{\pm 1})$.
\end{itemize}
Recall the involution $\te$ of $\Bdd$ from Section \ref{Extended double affine braid group section}.
For each $\xpm_{i,m} = b(z)$ above let $\xbpm_{i,m} = \te(b)(z)$,
and for each $k_{i}^{\pm 1} = b(z)$ define $\kb_{i}^{\pm 1} = \te(b)(z^{-1})$.
In particular,
\begin{align*}
    \kb_{i}^{\pm 1} = k_{i}^{\mp 1}, \quad
    \xbpm_{i,0} = \xpm_{i,0}, \quad
    \xbpm_{i,\mp 1} = o(i) Y_{i}(\xpm_{i,0}),
\end{align*}
for all $i\in I_{0}$, and outside of $E_{8}^{(1)}$ we have
\begin{align*}
    \kb_{0}^{\pm 1} = \rho^{-1}_{j}(k_{j}^{\mp 1}), \quad
    \xbpm_{0,0} = \rho^{-1}_{j}(\xpm_{j,0}), \quad
    \xbpm_{0,\mp 1} = o(0) \rho^{-1}_{j} Y_{j}(\xpm_{j,0}),
\end{align*}
for any $j\in\Imin\setminus\lbrace 0\rbrace$.
In type $E_{8}^{(1)}$ these are replaced with
\begin{align*}
    &\kb_{0}^{\pm 1} = T_{1}^{-1}(T^{v}_{0})^{-1}(k_{1}^{\mp 1}), \\
    &\xbpm_{0,0} = T_{1}^{-1}(T^{v}_{0})^{-1}(\xpm_{1,0}), \\
    &\xbpm_{0,\mp 1} = o(0) Y_{\beta} T_{1}^{-1}(T^{v}_{0})^{-1}(\xpm_{1,0}),
\end{align*}
whenever $(\beta,\alpha_{0}) = 1$.
If we also define $\Cb^{\pm 1} = (k_{0}^{a_{0}}\dots k_{n}^{a_{n}})^{\mp 1}$ then the following theorem shows that mapping generators to their bold counterparts extends to an anti-involution of $\Utor$ which exchanges $\Uh$ and $\Uv$ (up to a twist by $\sigma$).
As an immediate corollary, we can deduce the injectivity of $h$ from that of $v$.

\begin{thm} \label{psi theorem}
    There is an anti-involution $\psi$ of $\Utor$ sending
    \begin{align*}
        \xpm_{i,m} \mapsto \xbpm_{i,m}, \quad
        k_{i} \mapsto \kb_{i}, \quad
        C \mapsto \Cb,
    \end{align*}
    for all $i\in I$ and $m=0,\mp 1$, determined by the conditions $\psi v = h \sigma$ and $\psi h = v \sigma$.
\end{thm}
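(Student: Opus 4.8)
The plan is to construct $\psi$ as an anti-homomorphism from the finite presentation of Proposition~\ref{simpler toroidal presentation}, and to verify separately that it is characterised by $\psi v = h\sigma$ and $\psi h = v\sigma$. The characterisation is the easy half. Since $\Uh$ and $\Uv$ generate $\Utor$ by Corollary~\ref{toroidal generated by horizontal and vertical}, these two conditions force $\psi|_{\Uv} = h\sigma v^{-1}$ and $\psi|_{\Uh} = v\sigma h^{-1}$, and hence determine $\psi$ uniquely. Conversely, reading off the formulas for the bold generators I would check that the prescribed assignment agrees with $h\sigma v^{-1}$ on the vertical generators $\xpm_{i,0}, \xpm_{i,\mp 1}, k_i$ ($i\in I_0$) and with $v\sigma h^{-1}$ on the horizontal generators $\xpm_{0,0}, k_0$; the only inputs are that $\te$ interchanges $\pi_j\leftrightarrow\rho_j$, $X_j\leftrightarrow Y_j$ and $T_0\leftrightarrow (T^{v}_{0})^{-1}$. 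The value on the remaining, genuinely mixed, generator $\xbpm_{0,\mp 1}$ — built from both a horizontal operator $\pi_j$ (or $T_1 T_0$ in type $E_8^{(1)}$) and a vertical operator $X_j$ — is then pinned down as a product, and this simultaneously exhibits $\psi$ as exchanging $\Uh$ and $\Uv$ up to the twist by $\sigma$.

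For existence I would isolate the structural identity behind the construction: every generator other than $C^{\pm 1}$ is of the form $b(z)$ for some $b\in\Bdd$ and some $z$ in the finite quantum group $\Uq$ sitting inside $\Uh\cap\Uv$, and its bold counterpart is exactly $\te(b)(\sigma(z))$, where $\sigma$ is the anti-involution fixing each $\xpm_{i,0}$ and inverting each $k_i$. The identity to aim for is the intertwiner $\psi\circ b = \te(b)\circ\psi$ for all $b\in\Bdd$, which is consistent with $\psi$ being an anti-homomorphism and which encodes the horizontal–vertical swap precisely because $\te$ exchanges $\Bh$ and $\Bv$. Concretely, I would prove the relations among the bold generators — which I would isolate as an auxiliary lemma (this is Lemma~\ref{lemma for psi theorem}) — by reducing each relation of Proposition~\ref{simpler toroidal presentation} to a rank-two subalgebra $\U_{ij}$, as in the proofs of Proposition~\ref{Ti properties} and Theorem~\ref{toroidal braid group action theorem}. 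On such a piece the bold generators are expressed through the horizontal operators $Y_i, \rho_j$, the $\te$-images of the vertical operators $X_i, \pi_j$, so applying the automorphism $\te$ to the defining braid relations of $\Bdd$ — legitimate since the action of Theorem~\ref{toroidal braid group action theorem} is by automorphisms — sends each relation to the reversed relation demanded of an anti-homomorphism. Finally $\psi^2 = \mathrm{id}$ follows on generators from $\te^2 = \mathrm{id}$ and $\sigma^2 = \mathrm{id}$.

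I expect the principal obstacle to be the interplay between the central element and the degree shift, namely verifying relation~\ref{simpler toroidal relation 6} under $\psi$. Here $\psi$ must send the vertical central charge $C$ to the horizontal product $(k_0^{a_0}\cdots k_n^{a_n})^{-1} = \Cb$, and checking consistency amounts to tracking the $a_i$ labels through the image $t_0\mapsto Ck_\theta^{-1}$ underlying Jing's isomorphism — exactly the point where the normalisation $\delta=\sum_i a_i\alpha_i$ and the restriction to simply laced type enter. Equally delicate is confirming that the two local anti-isomorphisms $h\sigma v^{-1}$ and $v\sigma h^{-1}$ glue across the mixed relations involving $\xbpm_{0,\mp 1}$, rather than merely agreeing on $\Uq$; it is here that the braid-group duality under $\te$ does the essential work, replacing an otherwise opaque computation by the structural symmetry $\psi b = \te(b)\psi$.
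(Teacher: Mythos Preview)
Your high-level architecture is right---construct $\psi$ on the finite presentation, verify the intertwining $\psi v = h\sigma$ and $\psi h = v\sigma$, and deduce involutivity on the generating subalgebras---but the core of your existence argument has a real gap.

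The proposed rank-two reduction does not work here. In the proofs of Proposition~\ref{Ti properties} and Theorem~\ref{toroidal braid group action theorem} the reduction succeeds because the automorphisms $\T_{i}$ preserve each $\U_{ij\ell}$. The bold generators do not respect this structure: $\xbpm_{0,0} = \rho_{j}^{-1}(\xpm_{j,0})$ lies in $\Uv$, not in any $\U_{0j}$, and $\xbpm_{0,\mp 1}$ lies in neither $\Uh$ nor $\Uv$. So there is no rank-two subalgebra in which the relations among bold generators with index $0$ can be checked.

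More seriously, your sentence ``applying the automorphism $\te$ to the defining braid relations of $\Bdd$ \ldots\ sends each relation to the reversed relation demanded of an anti-homomorphism'' conflates relations in $\Bdd$ with relations in $\Utor$. The relations \ref{simpler toroidal relation 1}--\ref{simpler toroidal relation 11} are algebraic identities in $\Utor$, not braid identities, and $\te$ is an automorphism of $\Bdd$, not of $\Utor$. One cannot simply ``apply $\te$'' to such an identity. The intertwiner $\psi\circ b = \te(b)\circ\psi$ is the \emph{conclusion} (it is Proposition~\ref{compatibilities proposition}), not an available tool at this stage; invoking it to verify the relations is circular.

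What the paper actually does is closer in spirit to your last paragraph than to your middle one. First, relations with all indices in $I_{0}$ hold because by Lemma~\ref{lemma for psi theorem}(\ref{lemma (2)}) the bold generators $\xbpm_{i,m}$, $\kb_{i}$, $\Cb$ for $i\in I_{0}$ are precisely $h\sigma$ applied to the corresponding Drinfeld new generators, so the relations are images under the anti-homomorphism $h\sigma$ of relations already holding in $\Uaff$. Second, relations involving only $\xbpm_{i,0}$ and $\kb_{i}^{\pm 1}$ (all $i\in I$) hold because by (\ref{lemma (3)}) these are $v\sigma$ applied to Drinfeld--Jimbo generators. The remaining relations---exactly the ``mixed'' ones involving $\xbpm_{0,\mp 1}$---are obtained by applying \emph{specific} automorphisms in $\Bdd$ to already-established relations: when $|\Omega|>2$ one uses (\ref{lemma (4)}), which says $\rho\in\Omega^{v}$ permutes bold generators compatibly, to transport each relation from $I_{0}$; in types $E_{7}^{(1)}$ and $E_{8}^{(1)}$ one must instead locate, for each $i\in I_{0}$, explicit elements $Y_{\mu_{i}}, Y_{\mu'_{i}}$ that fix certain bold generators and shift others (Table~\ref{E8 Y table}), and this requires genuine case-by-case work. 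Your proposal does not anticipate this type-dependence, and in particular offers no mechanism for $E_{8}^{(1)}$ where $\Omega$ is trivial and no $\rho$ is available.
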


A brief technical lemma gives various identities required for the proof of the above.
Note that in type $A_{2n}^{(1)}$ we restrict to $\rho = \rho_{1}$ for (\ref{lemma (4)}).

\begin{lem}\label{lemma for psi theorem}
\begin{itemize}
        \item $Y_{i}(\xbpm_{j,0}) = \xbpm_{j,0}$ and $Y_{i}(\kb_{j}^{\pm 1}) = \kb_{j}^{\pm 1}$ for all distinct $i,j \in I_{0}, \hfill \refstepcounter{equation}(\theequation)\label{lemma (1)}$
        \item $\xbpm_{i,m} = h\sigma(\xpm_{i,m})$, $\kb_{i}^{\pm 1} = h\sigma(k_{i}^{\pm 1})$ and $\Cb^{\pm 1} = h\sigma(C^{\pm 1})$ for all $i \in I_{0}$ and $m=0,\mp 1, \hfill \refstepcounter{equation}(\theequation)\label{lemma (2)}$
        \item $\xbpm_{i,0} = v\sigma(\xpm_{i})$ and $\kb_{i}^{\pm 1} = v\sigma(t_{i}^{\pm 1})$ for all $i\in I, \hfill \refstepcounter{equation}(\theequation)\label{lemma (3)}$
        \item $\rho(\xbpm_{i,m}) = o_{i,\rho(i)}^{m}\xbpm_{\rho(i),m}$ and $\rho(\kb_{i}^{\pm 1}) = \kb_{\rho(i)}^{\pm 1}$ for all $i\in I$, $m=0,\mp 1$ and $\rho\in\Omega^{v}. \hfill \refstepcounter{equation}(\theequation)\label{lemma (4)}$
    \end{itemize}
\end{lem}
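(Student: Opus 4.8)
The plan is to reduce every identity to the two affine Lusztig--Beck actions sitting inside $\Utor$: by Remark~\ref{braid restriction remark} the horizontal subgroup $\Bh$ acts on $\Uh$ and the vertical subgroup $\Bv$ acts on $\Uv$, in each case exactly as in Theorem~\ref{Beck's affine result}. I will evaluate each bold generator by reading off its defining braid word as a horizontal element applied to $h(\Uq)$ or a vertical element applied to $v(\Uq)$, where $\Uq = \Uh\cap\Uv$. The only extra inputs are that $\eta$ restricts to $\sigma$ on $\Uh$ (so $\eta h = h\sigma$) and to $\eta'$ on $\Uv$, that $\X_{i}$ restricts to $\Xb_{i}$ on $\Uv$, that $h$ carries the central element $C = \prod_{i\in I}t_{i}^{a_{i}}$ of the abstract affine algebra to $k_{0}^{a_{0}}\dots k_{n}^{a_{n}}$ (via Jing's isomorphism, using $a_{0}=1$), and the conjugation relations of $\Bdd$.

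The horizontal identities (\ref{lemma (1)}) and (\ref{lemma (2)}) rest on first pinning down the action of $Y_{\omega_{i}^{\vee}}$ on $\Uh$. Writing $Y_{\beta}$ as the image of $X_{\beta}$ under the automorphism of $\Bd$ inverting $T_{0},\dots,T_{n}$ (Remark~\ref{alternative Bernstein presentation}), and using $\Tb_{l}^{-1} = \sigma\Tb_{l}\sigma$ for all $l\in I$ together with the fact that $\sigma$ commutes with each $S_{\pi}$, one checks that conjugation by $\sigma$ intertwines the two presentations; hence, transported through $h$, the generator $Y_{\omega_{i}^{\vee}}$ acts on the abstract affine algebra as $\sigma\Xb_{i}\sigma$. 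Since $\sigma\Xb_{i}\sigma$ fixes $x_{j}^{\pm}$ and $t_{j}^{\pm 1}$ for $j\neq i$, applying $h$ to $\xbpm_{j,0}=h(x_{j}^{\pm})$ and $\kb_{j}^{\pm 1}=h(t_{j}^{\mp 1})$ gives (\ref{lemma (1)}) at once. For (\ref{lemma (2)}) the case $m=0$ is the definition, while for $m=\mp 1$ I compute $Y_{i}(\xpm_{i,0}) = h(\sigma\Xb_{i}\sigma(x_{i}^{\pm})) = o(i)\,h\sigma(\xpm_{i,\mp 1})$, so that the two factors of $o(i)$ cancel and $\xbpm_{i,\mp 1} = h\sigma(\xpm_{i,\mp 1})$; the $k$ and $C$ statements follow from $\kb_{i}^{\pm 1} = k_{i}^{\mp 1}$, $\sigma(C) = C^{-1}$ and $h(C) = k_{0}^{a_{0}}\dots k_{n}^{a_{n}}$, which give $h\sigma(C^{\pm 1}) = (k_{0}^{a_{0}}\dots k_{n}^{a_{n}})^{\mp 1} = \Cb^{\pm 1}$.

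Identity (\ref{lemma (3)}) is the vertical mirror. For $i\in I_{0}$ it is immediate, since $\xbpm_{i,0} = \xpm_{i,0} = v(x_{i}^{\pm})$ and $\sigma$ fixes $x_{i}^{\pm}$. For $i=0$ the key point is that $\rho_{j}^{-1}\in\Omega^{v}$ realises, in the vertical Lusztig--Beck action on $\Uv$, the outer automorphism carrying node $j$ to node $0$; thus $\rho_{j}^{-1}(\xpm_{j,0}) = v(x_{0}^{\pm})$ and $\rho_{j}^{-1}(k_{j}^{\mp 1}) = v(t_{0}^{\mp 1})$, which is exactly $v\sigma(x_{0}^{\pm})$ and $v\sigma(t_{0}^{\pm 1})$ as $\sigma$ fixes $x_{0}^{\pm}$ and inverts $t_{0}$. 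The $E_{8}^{(1)}$ formulae are treated identically after replacing $\rho_{j}^{-1}$ by the vertical braid word $T_{1}^{-1}(T^{v}_{0})^{-1}$, which implements the same realisation of the $0$-node generator when $\Omega$ is trivial.

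Finally, for (\ref{lemma (4)}) I would conjugate. Applying $\te$ to the relation $\pi X_{\beta}\pi^{-1} = X_{\pi(\beta)}$ of $\Bdd$ gives $\rho\,Y_{\omega_{i}^{\vee}}\rho^{-1} = Y_{\omega_{\rho(i)}^{\vee}}$ for $\rho\in\Omega^{v}$, whence $\rho(\xbpm_{i,\mp 1}) = o(i)\,Y_{\omega_{\rho(i)}^{\vee}}\big(\rho(\xpm_{i,0})\big)$; evaluating $\rho$ on the weight-$0$ generator by part (\ref{lemma (3)}) and collecting the sign $o(i)/o(\rho(i)) = o_{i,\rho(i)} = o_{i,\rho(i)}^{\mp 1}$ yields the claim, the weight-$0$ and $k$ statements being the vertical permutation action directly. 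I expect the main obstacle to lie precisely here: one must verify that the conjugate $\rho\,Y_{\omega_{i}^{\vee}}\rho^{-1}$ lands on the correct horizontal lattice element even when $\rho(i)=0$ (where $\xbpm_{0,\mp 1}$ carries an extra $\rho$-twist), and ensure the signs combine into $o_{i,\rho(i)}^{m}$ rather than its inverse — it is exactly this sign bookkeeping, where in type $A_{2n}^{(1)}$ there is no genuine length function, that forces the restriction to $\rho = \rho_{1}$.
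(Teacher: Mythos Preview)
Your overall strategy matches the paper's proof exactly: establish the key identity $Y_{\beta}h = h\sigma\Xb_{\beta}\sigma$ (your ``$Y_{\omega_{i}^{\vee}}$ acts on $\Uh$ as $\sigma\Xb_{i}\sigma$''), read off (\ref{lemma (1)}) and (\ref{lemma (2)}) from it, obtain (\ref{lemma (3)}) from $\rho_{j}v = vS_{\rho_{j}}$, and prove (\ref{lemma (4)}) by conjugating the $Y$'s through $\rho$.

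There is one concrete error in your treatment of (\ref{lemma (4)}). You claim $\rho\,Y_{\omega_{i}^{\vee}}\rho^{-1} = Y_{\omega_{\rho(i)}^{\vee}}$, but this is false as a relation in $\Bdd$. The action of $\Omega$ on $\Pov\subset P^{\vee}$ is $\pi(\lambda_{j}^{\vee}) = \lambda_{\pi(j)}^{\vee}$, and since $\omega_{i}^{\vee} = \lambda_{i}^{\vee} - a_{i}\lambda_{0}^{\vee}$ one finds $\pi(\omega_{i}^{\vee}) = \omega_{\pi(i)}^{\vee} - a_{i}\omega_{\pi(0)}^{\vee}$. Hence the correct conjugation identity is
\[
\rho\,Y_{i}\,\rho^{-1} = Y_{\rho(i)}\,Y_{\rho(0)}^{-a_{i}},
\]
which is precisely what the paper uses. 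The extra factor $Y_{\rho(0)}^{-a_{i}}$ is then removed by appealing to (\ref{lemma (1)}), since $\rho(0)\neq\rho(i)$. This is not merely cosmetic: it is what forces the case $\rho(i)=0$ to be handled separately (there $Y_{\rho(i)} = Y_{0}$ is not defined as a fundamental lattice element, and $\xbpm_{0,\mp 1}$ carries its own $\rho_{j}^{-1}$ or $T_{1}^{-1}(T_{0}^{v})^{-1}$ twist), and it is also why (\ref{lemma (1)}) must be proved \emph{before} (\ref{lemma (4)}). Your closing paragraph anticipates trouble in roughly the right place, but the trouble begins one step earlier than you locate it.
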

\begin{proof}
    We know from Proposition~\ref{Ti properties} that $T_{i}h = h \Tb_{i} = h \sigma \Tb_{i}^{-1} \sigma$ for all $i\in I$, and it is immediate from the definitions that $\pi h = h S_{\pi} = h \sigma S_{\pi} \sigma$ for each $\pi\in\Omega$.
    Each $Y_{\beta}$ can be written as $\pi T_{i_{1}}^{\pm 1}\dots T_{i_{s}}^{\pm 1}$ and so as $\sigma^{2}$ is the identity,
    \begin{align}\label{Y on Uh}
        Y_{\beta} h = h \sigma S_{\pi}\Tb_{i_{1}}^{\mp 1}\dots\Tb_{i_{s}}^{\mp 1} \sigma = h \sigma \Xb_{\beta} \sigma.
    \end{align}
    Note that (\ref{lemma (2)}) for $\xbpm_{i,0}$, $\kb_{i}^{\pm 1}$ and $\Cb^{\pm 1}$ is trivial, as is (\ref{lemma (3)}) when $i\in I_{0}$.
    Using equation (\ref{Y on Uh}) we can then deduce (\ref{lemma (1)}) and the remainder of (\ref{lemma (2)}).
    As mentioned in Remark~\ref{braid restriction remark}, $\Bv$ acts on $\Uv$ via Beck's extended affine braid group action.
    In particular, each $\rho_{j} v = v S_{\rho_{j}}$ and so outside of $E_{8}^{(1)}$,
    \begin{align*}
        & \xbpm_{0,0} = \rho^{-1}_{j} v(\xpm_{i}) = v S^{-1}_{\rho_{j}} (\xpm_{j}) = v(\xpm_{0}) = v \sigma (\xpm_{0}), \\
        & \kb_{0}^{\pm 1} = \rho^{-1}_{j} v(t_{j}^{\mp 1}) = v S^{-1}_{\rho_{j}} (t_{j}^{\mp 1}) = v(t_{0}^{\mp 1}) = v \sigma (t_{0}^{\pm 1}),
    \end{align*}
    for any $j\in\Imin\setminus\lbrace 0\rbrace$.
    On the other hand, in type $E_{8}^{(1)}$ we have
    \begin{align*}
        & \xbpm_{0,0} = T_{1}^{-1}(T^{v}_{0})^{-1}v(\xpm_{1,0}) = v\Tb_{1}^{-1}\Tb_{0}^{-1}(\xpm_{1,0}) = v(\xpm_{0}) = v\sigma(\xpm_{0}), \\
        & \kb_{0}^{\pm 1} = T_{1}^{-1}(T^{v}_{0})^{-1}v(t_{1}^{\mp 1}) = v\Tb_{1}^{-1}\Tb_{0}^{-1}(t_{i}^{\mp 1}) = v(t_{0}^{\mp 1}) = v\sigma(t_{0}^{\pm 1}).
    \end{align*}
    Then for all $\rho\in\Omega^{v}$ and $i\in I$,
    \begin{align*}
        & \rho(\xbpm_{i,0}) = \rho v(\xpm_{i}) = v S_{\rho} (\xpm_{i}) = v(\xpm_{\rho(i)}) = \xbpm_{\rho(i),0}, \\
        & \rho(\kb_{i}^{\pm 1}) = \rho v(t_{i}^{\mp 1}) = v S_{\rho} (t_{i}^{\mp 1}) = v(t_{\rho(i)}^{\mp 1}) = \kb_{\rho(i)}^{\pm 1}.
    \end{align*}
    It follows that whenever $i$, $\rho(i)$ and $\rho(0)$ are non-zero,
    \begin{align*}
        \rho(\xbpm_{i,\mp 1}) &= o(i) \rho Y_{i}(\xbpm_{i,0}) = o(i) Y_{\rho(i)} Y_{\rho(0)}^{-a_{i}} \rho(\xbpm_{i,0}) \\
        &= o(i) Y_{\rho(i)} Y_{\rho(0)}^{-a_{i}} (\xbpm_{\rho(i),0}) = o_{i,\rho(i)}\xbpm_{\rho(i),\mp 1},
    \end{align*}
    and the cases $i=0$ and $\rho(0) = 0$ are trivial.
    Outside of type $A_{2n}^{(1)}$ it is immediate from the definitions that $\rho(\xbpm_{\rho^{-1}(0),\mp 1}) = o_{i,0}\xbpm_{0,\mp 1}$.
    In type $A_{2n}^{(1)}$, from the other identities this is equivalent to $\rho_{1}^{2}(\xbpm_{2n,\mp 1}) = \xbpm_{1,\mp 1}$,
    and the equality $\rho_{1}^{2} Y_{2n} Y_{2n-1}^{-1} = \zeta_{0}\zeta_{1} Y_{1} \rho_{1}^{2}$ implies that $\rho_{1}^{2}(\xbpm_{2n,\mp 1}) = \zeta_{0}\zeta_{1}(\xbpm_{1,\mp 1})$.
    Then using (\ref{lemma (2)}) and $\Xb_{1} = S_{\pi_{1}}\Tb_{n}\dots\Tb_{1}$ we can obtain an explicit expression for $\xbpm_{1,\mp 1}$ in terms of $\lbrace \xpm_{i,0}, k_{i}^{\pm 1} : i\in I\rbrace$, from which we deduce that $\zeta_{0}\zeta_{1}(\xbpm_{1,\mp 1}) = \xbpm_{1,\mp 1}$.
\end{proof}

\begin{proof}[Proof of Theorem~{\upshape\ref{psi theorem}}]
    To show that $\psi$ is an anti-homomorphism, we must check that relations \ref{simpler toroidal relation 1}-\ref{simpler toroidal relation 11} of Proposition~\ref{simpler toroidal presentation} still hold if we reverse the order of multiplication and replace each generator with its image.
    Denote these modified relations by \textbf{(i)}-\textbf{(xi)}.
    Every relation with indices in $I_{0}$ follows immediately from the Drinfeld new presentation of $\Uh$ using (\ref{lemma (2)}).
    Moreover, relations involving only $\xbpm_{i,0}$ and $\kb_{i}^{\pm 1}$ terms follow from the Drinfeld-Jimbo presentation of $\Uv$ by (\ref{lemma (3)}). \\
    
    When $|\Omega|>2$ all other relations can be reached by applying some $\rho\in\Omega^{v}$ and (\ref{lemma (4)}) to a relation with indices in $I_{0}$.
    In type $E_{7}^{(1)}$, the same reasoning gives all relations apart from those involving indices $\lbrace i,j\rbrace = \lbrace 0,6\rbrace$ which are not contained in $\Uv$.
    If $(\beta,\alpha_{0}) = 0$ then $(\rho_{6}(\beta),\alpha_{6}) = 0$ so by (\ref{lemma (4)}) and (\ref{Y on Uh}),
    \begin{align*}
        Y_{\beta}(\kb_{0}^{\pm 1}) &=
        Y_{\beta}\rho_{6}(\kb_{6}^{\pm 1}) =
        \rho_{6} Y_{\rho_{6}(\beta)} h\sigma(k_{6}^{\pm 1}) \\
        &=
        \rho_{6} h\sigma X_{\rho_{6}(\beta)} (k_{6}^{\pm 1}) =
        \rho_{6} h\sigma(k_{6}^{\pm 1}) \\
        &=
        \kb_{0}^{\pm 1},
    \end{align*}
    and $Y_{\beta}(\xbpm_{0,m}) = \xbpm_{0,m}$ for $m=0,\mp 1$ via similar equalities.
    Therefore, taking for example $Y_{\mu} = Y_{4}^{-1}Y_{5}Y_{6}$ we can obtain the remaining relations as follows.
    \begin{enumerate}
        \item[\textbf{(iv)}]
        For $m=\mp 1$ apply $Y_{\mu}$ and $\rho_{6} Y_{\mu}$ to the $m=0$ cases.
        \item[\textbf{(vi)}-\textbf{(vii)}]
        Apply $Y_{\mu}$, $\rho_{6} Y_{\mu}$ and $Y_{\mu} \rho_{6} Y_{\mu}$ to the corresponding relations in \textbf{(v)}.
        \item[\textbf{(xi)}]
        Apply $Y_{\mu}$, $\rho_{6} Y_{\mu}$ and $Y_{\mu} \rho_{6} Y_{\mu}$ to the cases involving $\xbpm_{0,0}$ and $\xbpm_{6,0}$.
    \end{enumerate}
    In type $E_{8}^{(1)}$ we proceed in an analogous fashion.
    Table~\ref{E8 Y table} lists for each $i\in I_{0}$ some $Y_{\mu_{i}}$ which fixes $\xbpm_{i,0}$ and $\kb_{i}^{\pm 1}$ and sends $\xbpm_{0,0}\mapsto o(0)\xbpm_{0,\mp 1}$, and some $Y_{\mu'_{i}}$ which fixes $\xbpm_{0,0}$ and $\kb_{0}^{\pm 1}$ and maps $\xbpm_{i,0}\mapsto o(i)\xbpm_{i,\mp 1}$.
    \begin{table}
\centering
\begin{tabular}{ |c||c|c| } 
 \hline
  $i$ & $Y_{\mu_{i}}$ & $Y_{\mu'_{i}}$ \\
 \hline
 & & \\[-11pt]
 \hline
 $1$ & $Y_{7}Y_{8}^{-1}$ & $Y_{1}Y_{7}^{-1}$ \\
 \hline
 $2$ & $Y_{8}Y_{6}^{-1}$ & $Y_{2}Y_{8}^{-1}$ \\
 \hline
 $3$ & $Y_{6}Y_{4}^{-1}$ & $Y_{3}Y_{6}^{-1}$ \\
 \hline
 $4$ & $Y_{7}Y_{8}Y_{5}^{-1}$ & $Y_{4}Y_{7}^{-1}Y_{8}^{-1}$ \\
 \hline
 $5$ & $Y_{8}^{2}Y_{4}^{-1}Y_{7}^{-1}$ & $Y_{5}Y_{8}^{-2}$ \\
 \hline
 $6$ & $Y_{3}Y_{4}^{-1}$ & $Y_{6}Y_{3}^{-1}$ \\
 \hline
 $7$ & $Y_{1}Y_{8}^{-1}$ & $Y_{7}Y_{1}^{-1}$ \\
 \hline
 $8$ & $Y_{2}Y_{6}^{-1}$ & $Y_{8}Y_{2}^{-1}$ \\
 \hline
\end{tabular}
\caption{\hspace{.5em}Elements $Y_{\mu_{i}},Y_{\mu'_{i}}\in\Bdd$ for each $i\in I_{0}$ in type $E_{8}^{(1)}$}\label{E8 Y table}
\end{table}
    Let us verify the conditions on $Y_{\mu_{i}}$ and $Y_{\mu'_{i}}$.
    In all cases $Y_{\mu_{i}}(\xbpm_{0,0}) = o(0)\xbpm_{0,\mp 1}$ is immediate by definition, and (\ref{lemma (1)}) gives
    \begin{align*}
        Y_{\mu_{i}}(\kb_{i}^{\pm 1}) = \kb_{i}^{\pm 1}, \quad
        Y_{\mu_{i}}(\xbpm_{i,0}) = \xbpm_{i,0}, \quad
        Y_{\mu'_{i}}(\xbpm_{i,0}) = o(i)\xbpm_{i,\mp 1}.
    \end{align*}
    The other properties follow from $Y_{\mu'_{i}}T_{1}^{-1}(T^{v}_{0})^{-1} = T_{1}^{-1}(T^{v}_{0})^{-1}Y_{\mu'_{i}}$ when $i\not= 1$, and $Y_{\mu'_{1}}T_{1}(T^{v}_{0}) = T_{1}^{-1}(T^{v}_{0})^{-1}Y_{2}Y_{7}^{-1}$.
    We can then obtain the remaining relations for $\psi$ to be an anti-homomorphism as follows.
    \begin{itemize}
        \item[\textbf{(iv)}]
        For $m=\mp 1$ apply $Y_{\mu_{i}}$ and $Y_{\mu'_{i}}$ to the $m=0$ cases.
        \item[\textbf{(vi)}-\textbf{(vii)}]
        Apply $Y_{\mu_{i}}$, $Y_{\mu'_{i}}$ and $Y_{\mu_{i}} Y_{\mu'_{i}}$ to the corresponding relations in \textbf{(v)}.
        \item[\textbf{(ix)}-\textbf{(x)}] The case $\lbrace i,j\rbrace = \lbrace 0,1\rbrace$ comes from applying both sides of
        \begin{align*}
            o(0) Y_{\mu_{1}} T_{1}^{-1} =
            -o(1) Y_{\mu'_{1}} (T^{v}_{0})^{-1} Y_{7}^{2}Y_{8}^{-1} (T^{v}_{0})^{-1} T_{1}^{-1}
        \end{align*}
        to $\xbpm_{0,0}$, since using Lemma~\ref{lemma for psi theorem} and Remark~\ref{braid restriction remark} we have
        \begin{gather*}
            \xb^{+}_{0,0} \xmapsto{T_{1}^{-1}}
            [\xb^{+}_{0,0},\xb^{+}_{1,0}]_{q^{-1}}
            \xmapsto{o(0) Y_{\mu_{1}}}
            [\xb^{+}_{0,-1},\xb^{+}_{1,0}]_{q^{-1}} \\
            \xb^{+}_{0,0} \xmapsto{(T^{v}_{0})^{-1} T_{1}^{-1}}
            \xb^{+}_{1,0} \xmapsto{(T^{v}_{0})^{-1}Y_{7}^{2}Y_{8}^{-1}}
            [\xb^{+}_{1,0},\xb^{+}_{0,0}]_{q^{-1}}
            \xmapsto{o(1) Y_{\mu'_{1}}}
            [\xb^{+}_{1,-1},\xb^{+}_{0,0}]_{q^{-1}}
        \end{gather*}
        and similarly for $\xb^{-}_{0,0}$.
        \item[\textbf{(xi)}]
        Apply $Y_{\mu_{i}}$, $Y_{\mu'_{i}}$ and $Y_{\mu_{i}} Y_{\mu'_{i}}$ to the cases involving $\xbpm_{0,0}$ and $\xbpm_{i,0}$.
    \end{itemize}
    We have therefore shown that $\psi$ is an anti-homomorphism.
    The conditions $\psi v = h\sigma$ and $\psi h = v\sigma$ are then immediate from (\ref{lemma (2)}) and (\ref{lemma (3)}), and moreover determine $\psi$ since $\Uh$ and $\Uv$ generate $\Utor$.
    Furthermore, it also follows that $\psi^{2} = \mathrm{id}$ on $\Uh$ and $\Uv$ and so $\psi$ is in fact an anti-involution.
\end{proof}

Figure~\ref{illustrations} contains simple illustrations of the quantum toroidal algebra which highlight where elements of the two generating sets
$\lbrace \xpm_{i,0}, \xpm_{i,\mp 1}, k_{i}^{\pm 1}, C^{\pm 1} : i\in I \rbrace$ and
$\lbrace \xbpm_{i,0}, \xbpm_{i,\mp 1}, \kb_{i}^{\pm 1}, \Cb^{\pm 1} : i\in I \rbrace$
lie inside $\Utor$.
\begin{figure}
    \centering
    \begin{tikzpicture}[scale=0.85, transform shape]
    \node at (-0.05,-0.35) {$k^{\pm 1}_{0}$};
    \node at (-0.05,0.35) {$\xpm_{0,0}$};
    \node at (-0.05,1.5) {$\xpm_{0,\mp 1}$};
    \node at (1.95,-0.35) {$k^{\pm 1}_{1}$};
    \node at (1.95,0.35) {$\xpm_{1,0}$};
    \node at (1.95,1.5) {$\xpm_{1,\mp 1}$};
    \node at (3,-1.5) {$C^{\pm 1}$};
    \node at (3,-0.35) {$\cdots$};
    \node at (3,0.35) {$\cdots$};
    \node at (3,1.5) {$\cdots$};
    \node at (4.05,-0.35) {$k^{\pm 1}_{n}$};
    \node at (4.05,0.35) {$\xpm_{n,0}$};
    \node at (4.05,1.5) {$\xpm_{n,\mp 1}$};
    \node at (-0.8,0.65) {$\color{blue} \Uh$};
    \node at (1.2,1.85) {$\color{red} \Uv$};
    \draw[draw=black] (-1.2,-2.2) rectangle ++(6.5,4.4);
    \draw[draw=blue] (-1.1,-0.9) rectangle ++(6.2,1.8);
    \draw[draw=red] (0.9,-2.1) rectangle ++(4.3,4.2);
    \end{tikzpicture}
    ~~~
    \begin{tikzpicture}[scale=0.85, transform shape]
    \node at (-0.5,0) {$\Cb^{\pm 1}$};
    \node at (0.3,0) {$\begin{array}{c}
        \xbpm_{1,\mp 1} \\
        \vdots \\
        \xbpm_{n,\mp 1}
    \end{array}$};
    \node at (-0.05,1.5) {$\xbpm_{0,\mp 1}$};
    \node at (1.95,-0.35) {$\kb^{\pm 1}_{1}$};
    \node at (1.95,0.35) {$\xbpm_{1,0}$};
    \node at (2.5,1.5) {$\xbpm_{0,0}$};
    \node at (3,-0.35) {$\cdots$};
    \node at (3,0.35) {$\cdots$};
    \node at (4.05,-0.35) {$\kb^{\pm 1}_{n}$};
    \node at (4.05,0.35) {$\xbpm_{n,0}$};
    \node at (3.5,1.5) {$\kb^{\pm 1}_{0}$};
    \node at (-0.8,0.65) {$\color{blue} \Uh$};
    \node at (1.2,1.85) {$\color{red} \Uv$};
    \draw[draw=black] (-1.2,-2.2) rectangle ++(6.5,4.4);
    \draw[draw=blue] (-1.1,-0.9) rectangle ++(6.2,1.8);
    \draw[draw=red] (0.9,-2.1) rectangle ++(4.3,4.2);
    \end{tikzpicture}
    \caption{\hspace{.5em}Illustrations of $\Utor$ displaying the two generating sets}\label{illustrations}
\end{figure}
In particular, we see how the bold generators in some sense give $\Utor$ as a quantum affinization of its vertical rather than horizontal subalgebra, with $\Uv$ in a Drinfeld-Jimbo presentation and $\Uh$ in a Drinfeld new presentation (although the multiplication is of course reversed).

\begin{cor}\label{Phi corollary}
    There is an automorphism $\Phi := \eta \psi$ of $\Utor$ with inverse $\Phi^{-1} = \eta \Phi \eta = \psi \eta$, determined by the conditions $\Phi v = h$ and $\Phi h = v \eta' \sigma$.
\end{cor}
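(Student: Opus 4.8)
The plan is to derive everything formally from Theorem~\ref{psi theorem} together with the restriction properties of the anti-involution $\eta$ recorded in Section~\ref{quantum toroidal algebras subsection}. Since $\psi$ and $\eta$ are both anti-automorphisms of $\Utor$, their composite $\Phi = \eta\psi$ reverses the order of multiplication twice and is therefore a genuine algebra homomorphism; as each of $\psi,\eta$ is bijective, so is $\Phi$, giving an automorphism. The only care required throughout is to track the order-reversal correctly and to apply the restriction identities $\eta h = h\sigma$ and $\eta v = v\eta'$ (encoding that $\eta$ restricts to $\sigma$ on $\Uh$ and to $\eta'$ on $\Uv$) in the right direction.

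First I would compute the action of $\Phi$ on the two generating subalgebras. Using $\psi v = h\sigma$ from Theorem~\ref{psi theorem}, the identity $\eta h = h\sigma$, and $\sigma^{2} = \mathrm{id}$, we get
\[
    \Phi v = \eta\psi v = \eta h\sigma = h\sigma^{2} = h.
\]
Similarly, from $\psi h = v\sigma$ and $\eta v = v\eta'$,
\[
    \Phi h = \eta\psi h = \eta v\sigma = v\eta'\sigma.
\]
This establishes the two stated conditions. Because $\Uh$ and $\Uv$ generate $\Utor$ by Corollary~\ref{toroidal generated by horizontal and vertical}, these conditions determine $\Phi$ uniquely.

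Finally, for the inverse I would use that $\psi$ and $\eta$ are anti-involutions, so $\psi^{2} = \eta^{2} = \mathrm{id}$. Then $\psi\eta$ is a two-sided inverse for $\Phi$, since
\[
    (\eta\psi)(\psi\eta) = \eta\psi^{2}\eta = \eta^{2} = \mathrm{id}
    \quad\text{and}\quad
    (\psi\eta)(\eta\psi) = \psi\eta^{2}\psi = \psi^{2} = \mathrm{id},
\]
which gives $\Phi^{-1} = \psi\eta$; and the computation $\eta\Phi\eta = \eta(\eta\psi)\eta = \psi\eta$ then yields $\Phi^{-1} = \eta\Phi\eta$ as well. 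There is essentially no genuine obstacle here: the statement is a formal consequence of Theorem~\ref{psi theorem} and the involutivity of $\eta$ and $\psi$, and the only point worth double-checking is the self-consistency of the two expressions for $\Phi^{-1}$, which is immediate once both anti-automorphisms are known to square to the identity.
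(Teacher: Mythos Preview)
Your proof is correct and matches the paper's intended approach: the corollary is stated without proof precisely because it follows formally from Theorem~\ref{psi theorem} together with the restriction identities $\eta h = h\sigma$ and $\eta v = v\eta'$, exactly as you have written.
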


\begin{rmk}
    \begin{enumerate}
        \item In type $A_{n}^{(1)}$ this is precisely the automorphism of Miki \cite{Miki99} with the extra deformation parameter $\kappa$ set to $1$.
        \item Note in particular the actions on central elements: $\psi$ exchanges $C$ and $(k_{0}^{a_{0}}\dots k_{n}^{a_{n}})^{-1}$, while
        $\Phi$ maps $C \mapsto k_{0}^{a_{0}}\dots k_{n}^{a_{n}}$ and $k_{0}^{a_{0}}\dots k_{n}^{a_{n}} \mapsto C^{-1}$.
    \end{enumerate}
\end{rmk}

The following proposition provides compatibilities between the action of $\Bdd$ on $\Utor$ and the (anti-)automorphisms $\psi$ and $\Phi^{\pm 1}$, which can therefore be viewed as quantum toroidal analogues of the corresponding involutions of the braid group.

\begin{prop}\label{compatibilities proposition}
    For all $b\in\Bdd$ we have
    \begin{itemize}
        \item $\psi \circ b = \te(b) \circ \psi, \hfill \refstepcounter{equation}(\theequation)\label{compatibility 1}$
        \item $\Phi \circ b = \gv(\te(b)) \circ \Phi, \hfill \refstepcounter{equation}(\theequation)\label{compatibility 2}$
        \item $\Phi^{-1} \circ b = \gh(\te(b)) \circ \Phi^{-1}. \hfill \refstepcounter{equation}(\theequation)\label{compatibility 3}$
    \end{itemize}
\end{prop}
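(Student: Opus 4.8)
The plan is to recognise relation (\ref{compatibility 1}) as an equality of two group homomorphisms $\Bdd\to\mathrm{Aut}(\Utor)$ and to reduce (\ref{compatibility 2}) and (\ref{compatibility 3}) to it. Since $\psi^{2}=\mathrm{id}$, relation (\ref{compatibility 1}) is equivalent to $\psi\circ b\circ\psi=\te(b)$; here $b\mapsto\psi b\psi$ is a homomorphism (because $\psi\psi=\mathrm{id}$) and $b\mapsto\te(b)$ is one because $\te$ is a group homomorphism and the action of Theorem~\ref{toroidal braid group action theorem} is too. Hence it suffices to verify (\ref{compatibility 1}) for $b$ ranging over the generating set $\lbrace T_{i} : i\in I_{0}\rbrace\cup\lbrace X_{\omega_{i}^{\vee}} : i\in I_{0}\rbrace\cup\Omega$ of $\Bdd$ (with $T_{0}$ replacing $\Omega$ in type $E_{8}^{(1)}$, where $\Omega$ is trivial).

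First I would dispatch (\ref{compatibility 2}) and (\ref{compatibility 3}) using the auxiliary intertwining identity
\begin{equation*}
    \eta\circ b = \gv(b)\circ\eta \qquad\text{for all } b\in\Bdd, \tag{$\ast$}
\end{equation*}
which is proved in the same style: both sides are homomorphisms in $b$, and on generators $(\ast)$ amounts to $\T_{i}^{-1}=\eta\T_{i}\eta$ from Proposition~\ref{Ti properties}, together with the short checks $\eta\X_{i}=\X_{i}^{-1}\eta$ and $\eta\Scal_{\pi}=\Scal_{\pi}\eta$ (the latter using $o_{i,\pi(i)}^{2}=1$). Granting (\ref{compatibility 1}) and $(\ast)$, and using $\Phi=\eta\psi$, $\Phi^{-1}=\psi\eta$ and $\te\gv=\gh\te$, I obtain
\begin{align*}
    \Phi\circ b &= \eta\psi b = \eta\,\te(b)\,\psi = \gv(\te(b))\,\eta\psi = \gv(\te(b))\circ\Phi, \\
    \Phi^{-1}\circ b &= \psi\eta b = \psi\,\gv(b)\,\eta = \te(\gv(b))\,\psi\eta = \gh(\te(b))\circ\Phi^{-1},
\end{align*}
which are exactly (\ref{compatibility 2}) and (\ref{compatibility 3}).

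The core of the argument is therefore (\ref{compatibility 1}) on the three families of generators, verified by evaluating both sides on the finite generating set of $\Utor$ from Proposition~\ref{simpler toroidal presentation} and feeding in Lemma~\ref{lemma for psi theorem}. For $b=T_{i}$ with $i\in I_{0}$, both $T_{i}$ and $\te(T_{i})=T_{i}^{-1}$ preserve $\Uh$ and $\Uv$ and act there by $\Tb_{i}^{\pm 1}$; using $\psi v=h\sigma$, $\psi h=v\sigma$ and $\Tb_{i}^{-1}=\sigma\Tb_{i}\sigma$, the identity collapses to this single Lusztig relation on each subalgebra. For $b=\pi_{i}\in\Omega$, relation (\ref{compatibility 1}) reads $\psi\Scal_{\pi_{i}}=\rho_{i}\psi$, and since $\pi_{i}$ and $\rho_{i}$ induce the same diagram automorphism this is precisely the transformation rule (\ref{lemma (4)}) on the bold generators. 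For $b=X_{\omega_{i}^{\vee}}$, acting by $\Z_{\omega_{i}^{\vee}}=\X_{i}\X_{0}^{-a_{i}}$ with $\te(X_{\omega_{i}^{\vee}})=Y_{\omega_{i}^{\vee}}$, I would use the defining relation $\xbpm_{i,\mp 1}=o(i)Y_{i}(\xpm_{i,0})$ together with (\ref{lemma (1)}) and (\ref{lemma (2)}); for instance on $\xpm_{i,0}$ one has $\Z_{\omega_{i}^{\vee}}(\xpm_{i,0})=o(i)\xpm_{i,\mp 1}$, whence $\psi\Z_{\omega_{i}^{\vee}}(\xpm_{i,0})=o(i)\xbpm_{i,\mp 1}=Y_{i}(\xpm_{i,0})=Y_{i}(\psi(\xpm_{i,0}))$.

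The hard part will be the $X_{\omega_{i}^{\vee}}$ family, since (unlike the $T_{i}$ case) this generator does not preserve $\Uh$, so the check must be run on every generator of $\Utor$ -- in particular the boundary generators $\xpm_{0,0},\xpm_{0,\mp 1},k_{0}^{\pm 1}$, which I would expand via $\xpm_{0,0}=\pi_{j}^{-1}(\xpm_{j,0})$ and $k_{0}=\pi_{j}^{-1}(k_{j})$ and push through (\ref{Y on Uh}) and parts (\ref{lemma (1)})--(\ref{lemma (4)}) of Lemma~\ref{lemma for psi theorem}, mirroring the relation checks in the proof of Theorem~\ref{psi theorem}. Two types need separate bookkeeping: $E_{8}^{(1)}$, where $\Omega$ is trivial and $T_{0}$ (with $\te(T_{0})=(T^{v}_{0})^{-1}$) is treated directly using the $E_{8}^{(1)}$ expressions for the bold generators and Table~\ref{E8 Y table}; and $A_{2n}^{(1)}$, where the lack of a length function forces the modified braid group and the automorphisms $\zeta_{i}$, so $(\ast)$ and (\ref{compatibility 1}) must be matched against the adjusted relations of $\Bdd$. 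In all other simply laced types the three families above suffice and what remains is routine verification.
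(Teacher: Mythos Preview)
Your approach is correct and largely parallels the paper's. The reductions of (\ref{compatibility 2}) and (\ref{compatibility 3}) to (\ref{compatibility 1}) via the auxiliary identity $\eta\circ b=\gv(b)\circ\eta$, and the verifications for $T_{i}$ ($i\in I_{0}$) and $\pi_{j}\in\Omega$, are essentially identical to what the paper does.

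The one substantive difference is in how you complete the generating set for (\ref{compatibility 1}). You propose checking the $X_{\omega_{i}^{\vee}}$ family directly, which you rightly flag as ``the hard part'' since $\Z_{\omega_{i}^{\vee}}$ does not preserve $\Uh$ and the check must be run on every generator of $\Utor$, including the boundary ones $\xpm_{0,0},\xpm_{0,\mp 1},k_{0}^{\pm 1}$. The paper sidesteps this entirely with an observation you set up but do not exploit: since $\psi^{2}=\mathrm{id}$, the relation $\psi b\psi=\te(b)$ is symmetric in $b\leftrightarrow\te(b)$, so once it holds for $\pi_{j}$ it automatically holds for $\rho_{j}=\te(\pi_{j})\in\Omega^{v}$. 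Because $\B_{0}$, $\Omega$ and $\Omega^{v}$ together generate $\Bdd$ (outside $E_{8}^{(1)}$), no $X_{\omega_{i}^{\vee}}$ check is needed at all. This buys a real saving: your proposed verification on the boundary generators would involve applying $\Z_{\omega_{i}^{\vee}}=\X_{i}\X_{0}^{-a_{i}}$ to $\xpm_{0,\mp 1}$, which for $a_{i}>1$ lands on $\xpm_{0,m}$ with $|m|>1$, outside the finite presentation, so computing $\psi$ there would require further unpacking. In $E_{8}^{(1)}$ both approaches check $T_{0}$ directly (and the paper again uses the $\te$-symmetry to get $T_{0}^{v}$ for free, which together with $\B_{0}$ recovers all $X_{\beta}$ since $\Pov=\Qov$ there).
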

\begin{proof}
    We start with the first identity.
    Note that since $\psi^{2} = \mathrm{id}$, having the relation for some $b\in\Bdd$ immediately implies it for $b^{-1}$ and $\te(b)$ as well.
    By Proposition~\ref{Ti properties} and Theorem~\ref{psi theorem},
    \begin{align*}
        \psi T_{i} h = v \sigma \Tb_{i} = v \Tb_{i}^{-1} \sigma = T_{i}^{-1} \psi h, \\
        \psi T_{i} v = h \sigma \Tb_{i} = h \Tb_{i}^{-1} \sigma = T_{i}^{-1} \psi v,
    \end{align*}
    for all $i\in I_{0}$, which gives the case $b = T_{i}$ since $\Uh$ and $\Uv$ generate $\Utor$.
    For $b = \pi_{j}$ we have that $\rho_{j} \psi (C^{\pm 1}) = \Cb^{\pm 1} = \psi \pi_{j} (C^{\pm 1})$ follows from
    $X_{\beta} \psi (C^{\pm 1}) = \Cb^{\pm 1} = \psi Y_{\beta} (C^{\pm 1})$ and the identity with $T_{1}^{\pm 1},\dots,T_{n}^{\pm 1}$.
    From (\ref{lemma (4)}),
    \begin{align*}
        &\rho_{j} \psi (\xpm_{i,m}) = \xbpm_{\rho_{j}(i),m} = \psi \pi_{j} (\xpm_{i,m}), \\
        &\rho_{j} \psi (k_{i}^{\pm 1}) = \kb_{\rho_{j}(i)}^{\pm 1} = \psi \pi_{j} (k_{i}^{\pm 1}),
    \end{align*}
    for all $i\in I$ and $m=0,\mp 1$, and therefore $\rho_{j} \psi = \psi \pi_{j}$.
    When $\Omega$ is non-trivial the proof is complete since $\B_{0}$, $\Omega$ and $\Omega^{v}$ generate $\Bdd$.
    In type $E_{8}^{(1)}$ it remains to check that $T_{0}\psi = \psi (T^{v}_{0})^{-1}$.
    First note that $T_{0} \psi v = h \Tb_{0} \sigma = h \sigma \Tb_{0}^{-1} = \psi (T^{v}_{0})^{-1} v$ so we are done on $\Uv$.
    Furthermore,
    \begin{align*}
        T_{0} \psi(k_{0})
        &= T_{0}(C^{-1} k_{1}^{a_{1}} \dots k_{n}^{a_{n}})
        = (C^{-1} k_{0}^{2} k_{1}^{a_{1}} \dots k_{n}^{a_{n}}) \\
        &= \psi (C^{-2} k_{0} k_{1}^{2a_{1}} \dots k_{n}^{2a_{n}})
        = \psi T_{s_{\theta}} (C^{-2} k_{0}) \\
        &= \psi T_{s_{\theta}} X_{-\theta^{\vee}} (k_{0})
        = \psi (T^{v}_{0})^{-1} (k_{0}),
    \end{align*}
    and for any $i\in I_{0}$, with $Y_{\mu'_{i}}$ as in the proof of Theorem~\ref{psi theorem}, we have
    \begin{align*}
        T_{0} \psi (\xb^{-}_{i,1})
        &= \sum_{s=0}^{-a_{0i}} (-1)^{s} q^{s} (\xm_{0,0})^{(s)} \xm_{i,1} (\xm_{0,0})^{(-a_{ij}-s)}  \\
        &= \psi \left( \sum_{s=0}^{-a_{0i}} (-1)^{s} q^{s} (\xb^{-}_{0,0})^{(s)} \xb^{-}_{i,1} (\xb^{-}_{0,0})^{(-a_{ij}-s)} \right) \\
        &= o(i) \psi Y_{\mu'_{i}} \left( \sum_{s=0}^{-a_{0i}} (-1)^{s} q^{s} (\xb^{-}_{0,0})^{(s)} \xb^{-}_{i,0} (\xb^{-}_{0,0})^{(-a_{ij}-s)} \right) \\
        &= o(i) \psi Y_{\mu'_{i}} (T^{v}_{0})^{-1} (\xb^{-}_{i,0})
        = o(i) \psi (T^{v}_{0})^{-1} Y_{\mu'_{i}} (\xb^{-}_{i,0}) \\
        &= (T^{v}_{0})^{-1} (\xb^{-}_{i,1}),
    \end{align*}
    and similarly for $\xb^{+}_{i,-1}$.
    This verifies that $T_{0}\psi = \psi (T^{v}_{0})^{-1}$ on $\Uh$ and so we have proved the first identity.
    It is immediate from the definitions that $\pi = \eta \pi \eta$ for each $\pi\in\Omega$ and $X_{\beta}^{-1} = \eta X_{\beta} \eta$ for any $\beta\in\Pov$.
    Furthermore, since $T_{i}^{-1} = \eta T_{i} \eta$ for all $i\in I$ we deduce the second identity from the first.
    The final identity then follows from $(\gv \circ \te)^{-1} = \gh \circ \te$.
\end{proof}

Our automorphism $\Phi$ should have a range of applications pertaining to the representation theory of simply laced $\Utor$.
For example, by conjugating the Drinfeld topological coproduct of $\Utor$ with $\Phi$ we hope to study tensor products of the type $1$ integrable loop-highest weight modules classified by Hernandez \cites{Hernandez05}, as well as their associated $R$-matrices, thus extending work of Miki \cites{Miki00} in type $A_{n}^{(1)}$.
\\

Another possible direction concerns the relationship between the level $(1,0)$ vertex representations of $\Utor$ due to Saito and Jing \cites{Saito98,Jing98}, and the type $1$ representations constructed geometrically by Nakajima \cites{Nak01,Nak02} using the equivariant K-theory of quiver varieties on the affine Dynkin diagrams.
The expectation is that twisting these vertex representations by $\Phi$ should be isomorphic to (the dual of) a Fock space representation.
\\

Of course, this result is well-known in type $A_{n}^{(1)}$ \cites{Miki00,Tsymbaliuk19} and moreover provides an instance of the celebrated boson-fermion correspondence from mathematical physics.
We note that in the other simply laced types, Fock space representations of $\Utor$ have only been described geometrically at this stage.
In particular, there does not yet exist a definition in terms of the $q$-wedge construction by Kashiwara-Miwa-Petersen-Yung \cites{KMPY96}.
\\

Nevertheless, such an identification would likely allow us to relate various geometric features on the quiver variety side with more algebraic or combinatorial elements on the other.
Indeed, interesting affine phenomena have already turned out to be meaningful in the geometric setting, for example in work of Nagao \cites{Nag09a,Nag09b} in type $A_{n}^{(1)}$.

\pagebreak

\appendix

\section{The affine Dynkin diagrams} \label{Affine Dynkin Diagrams appendix}

\input{untwisted_diagrams}

\vspace{20pt}

\begin{figure}[H]
\centering
\begin{tabular}{r c}
$A^{(2)}_{2}$:
&
\begin{tikzpicture}[baseline={([yshift=-.35cm]current bounding box.north)}, double distance = 1.5pt, line width=0.65pt, every node/.style={outer sep=-2pt}]
    \node (0) at (-2,-0.5) {\large $\circ$};
    \node (1) at (-1,-0.5) {\large $\bullet$};
    \node (a0) at (-2,-1.1) {\scriptsize
    $\begin{array}{c}
        \color{black} 0 \\
        \color{blue} 2 \\
        \color{red} 1
    \end{array}$ };
    \node (a1) at (-1,-1.1) {\scriptsize
    $\begin{array}{c}
        \color{black} 1 \\
        \color{blue} 1 \\
        \color{red} 2
    \end{array}$ };
    \draw[white, {Stealth[scale=1.5, color=black, fill=black]}-] (0) edge (1);
    \draw (-1.6,-0.42) edge (-1.135,-0.42);
    \draw (-1.75,-0.475) edge (-1.135,-0.475);
    \draw (-1.75,-0.525) edge (-1.135,-0.525);
    \draw (-1.6,-0.58) edge (-1.135,-0.58);
\end{tikzpicture}
\\[1.2cm]
$A^{(2)}_{2n}$ ($n\geq 2$):
&
\begin{tikzpicture}[baseline={([yshift=-.35cm]current bounding box.north)}, double distance = 1.5pt, line width=0.65pt, every node/.style={outer sep=-2pt}]
    \node (0) at (-2,-0.5) {\large $\circ$};
    \node (1) at (-1,-0.5) {\large $\bullet$};
    \node (dots) at (0,-0.5) {$\,\cdots$};
    \node (n-1) at (1,-0.5) {\large $\bullet$};
    \node (n) at (2,-0.5) {\large $\bullet$};
    \node (a0) at (-2,-1.1) {\scriptsize
    $\begin{array}{c}
        \color{black} 0 \\
        \color{blue} 2 \\
        \color{red} 1
    \end{array}$ };
    \node (a1) at (-1,-1.1) {\scriptsize
    $\begin{array}{c}
        \color{black} 1 \\
        \color{blue} 2 \\
        \color{red} 2
    \end{array}$ };
    \node (dotslabels) at (0,-1.1) {\scriptsize
    $\begin{array}{c}
        \color{black} \cdots \\
        \color{blue} \cdots \\
        \color{red} \cdots
    \end{array}$ };
    \node (an-1) at (1,-1.1) {\scriptsize
    $\begin{array}{c}
        \color{black} n\mathrm{-}1 \\
        \color{blue} 2 \\
        \color{red} 2
    \end{array}$ };
    \node (an) at (2,-1.1) {\scriptsize
    $\begin{array}{c}
        \color{black} n \\
        \color{blue} 1 \\
        \color{red} 2
    \end{array}$ };
    \draw (1) edge (dots) (dots) edge (n-1) (n-1) edge (n);
    \draw[double, Stealth-] (0) -- (1);
    \draw[double, Stealth-] (n-1) -- (n);
\end{tikzpicture}
\\[1.2cm]
$A^{(2)}_{2n-1}$ ($n\geq 3$):
&
\begin{tikzpicture}[baseline={([yshift=0.1cm]current bounding box.east)}, double distance = 1.5pt, line width=0.65pt, every node/.style={outer sep=-2pt}]
    \node (0) at (-2,0) {\large $\circ$};
    \node (1) at (-2,-1) {\large $\bullet$};
    \node (2) at (-1,-0.5) {\large $\bullet$};
    \node (dots) at (0,-0.5) {$\,\cdots$};
    \node (n-1) at (1,-0.5) {\large $\bullet$};
    \node (n) at (2,-0.5) {\large $\bullet$};
    \node (a0) at (-2.55,0) {\scriptsize
    $\color{red} 1$
    $\color{blue} 1$
    $\color{black} 0$ };
    \node (a1) at (-2.55,-1) {\scriptsize
    $\color{red} 1$
    $\color{blue} 1$
    $\color{black} 1$ };
    \node (a2) at (-1,-1.1) {\scriptsize
    $\begin{array}{c}
        \color{black} 2 \\
        \color{blue} 2 \\
        \color{red} 2
    \end{array}$ };
    \node (dotslabels) at (0,-1.1) {\scriptsize
    $\begin{array}{c}
        \color{black} \cdots \\
        \color{blue} \cdots \\
        \color{red} \cdots
    \end{array}$ };
    \node (an-1) at (1,-1.1) {\scriptsize
    $\begin{array}{c}
        \color{black} n\mathrm{-}1 \\
        \color{blue} 2 \\
        \color{red} 2
    \end{array}$ };
    \node (an) at (2,-1.1) {\scriptsize
    $\begin{array}{c}
        \color{black} n \\
        \color{blue} 1 \\
        \color{red} 2
    \end{array}$ };
    \draw (0) edge (2) (1) edge (2) (2) edge (dots) (dots) edge (n-1);
    \draw[double, Stealth-] (n-1) -- (n);
\end{tikzpicture}
\\[1.2cm]
$D^{(2)}_{n+1}$ ($n\geq 2$):
&
\begin{tikzpicture}[baseline={([yshift=-.35cm]current bounding box.north)}, double distance = 1.5pt, line width=0.65pt, every node/.style={outer sep=-2pt}]
    \node (0) at (-2,-0.5) {\large $\circ$};
    \node (1) at (-1,-0.5) {\large $\bullet$};
    \node (dots) at (0,-0.5) {$\,\cdots$};
    \node (n-1) at (1,-0.5) {\large $\bullet$};
    \node (n) at (2,-0.5) {\large $\bullet$};
    \node (a0) at (-2,-1.1) {\scriptsize
    $\begin{array}{c}
        \color{black} 0 \\
        \color{blue} 1 \\
        \color{red} 1
    \end{array}$ };
    \node (a1) at (-1,-1.1) {\scriptsize
    $\begin{array}{c}
        \color{black} 1 \\
        \color{blue} 1 \\
        \color{red} 2
    \end{array}$ };
    \node (dotslabels) at (0,-1.1) {\scriptsize
    $\begin{array}{c}
        \color{black} \cdots \\
        \color{blue} \cdots \\
        \color{red} \cdots
    \end{array}$ };
    \node (an-1) at (1,-1.1) {\scriptsize
    $\begin{array}{c}
        \color{black} n\mathrm{-}1 \\
        \color{blue} 1 \\
        \color{red} 2
    \end{array}$ };
    \node (an) at (2,-1.1) {\scriptsize
    $\begin{array}{c}
        \color{black} n \\
        \color{blue} 1 \\
        \color{red} 1
    \end{array}$ };
    \draw (1) edge (dots) (dots) edge (n-1) (n-1) edge (n);
    \draw[double, Stealth-] (0) -- (1);
    \draw[double, -Stealth] (n-1) -- (n);
\end{tikzpicture}
\\[1.2cm]
$E^{(2)}_{6}$:
&
\begin{tikzpicture}[baseline={([yshift=-.35cm]current bounding box.north)}, double distance = 1.5pt, line width=0.65pt, every node/.style={outer sep=-2pt}]
    \node (0) at (-2,-0.5) {\large $\circ$};
    \node (1) at (-1,-0.5) {\large $\bullet$};
    \node (2) at (0,-0.5) {\large $\bullet$};
    \node (3) at (1,-0.5) {\large $\bullet$};
    \node (4) at (2,-0.5) {\large $\bullet$};
    \node (a0) at (-2,-1.1) {\scriptsize
    $\begin{array}{c}
        \color{black} 0 \\
        \color{blue} 1 \\
        \color{red} 1
    \end{array}$ };
    \node (a1) at (-1,-1.1) {\scriptsize
    $\begin{array}{c}
        \color{black} 1 \\
        \color{blue} 2 \\
        \color{red} 2
    \end{array}$ };
    \node (a2) at (0,-1.1) {\scriptsize
    $\begin{array}{c}
        \color{black} 2 \\
        \color{blue} 3 \\
        \color{red} 3
    \end{array}$ };
    \node (a3) at (1,-1.1) {\scriptsize
    $\begin{array}{c}
        \color{black} 3 \\
        \color{blue} 2 \\
        \color{red} 4
    \end{array}$ };
    \node (a4) at (2,-1.1) {\scriptsize
    $\begin{array}{c}
        \color{black} 4 \\
        \color{blue} 1 \\
        \color{red} 2
    \end{array}$ };
    \draw (0) edge (1) (1) edge (2) (3) -- (4);
    \draw[double, Stealth-] (2) -- (3);
\end{tikzpicture}
\\[1.2cm]
$D^{(3)}_{4}$:
&
\begin{tikzpicture}[baseline={([yshift=-.35cm]current bounding box.north)}, double distance = 1.5pt, line width=0.65pt, every node/.style={outer sep=-2pt}]
    \node (0) at (-2,-0.5) {\large $\circ$};
    \node (1) at (-1,-0.5) {\large $\bullet$};
    \node (2) at (0,-0.5) {\large $\bullet$};
    \node (a0) at (-2,-1.1) {\scriptsize
    $\begin{array}{c}
        \color{black} 0 \\
        \color{blue} 1 \\
        \color{red} 1
    \end{array}$ };
    \node (a1) at (-1,-1.1) {\scriptsize
    $\begin{array}{c}
        \color{black} 1 \\
        \color{blue} 2 \\
        \color{red} 2
    \end{array}$ };
    \node (a2) at (0,-1.1) {\scriptsize
    $\begin{array}{c}
        \color{black} 2 \\
        \color{blue} 1 \\
        \color{red} 3
    \end{array}$ };
    \draw (0) edge (1);
    \draw[{Stealth[scale=1.5]}-] (1) edge (2);
    \draw (-0.6,-0.43) edge (-0.151,-0.43);
    \draw (-0.6,-0.57) edge (-0.151,-0.57);
\end{tikzpicture}
\\
\end{tabular}
\caption{\hspace{.5em}The twisted affine Dynkin diagrams.
Black labels are vertex numbers,
blue labels are $a_{i}$ values,
and red labels are $a_{i}^{\vee}$ values}
\label{fig:Twisted_affine_Dynkin_diagrams}
\end{figure}

\pagebreak

\section{Proof of Lemma~\ref{lemma for new toroidal presentation}} \label{Proof of lemma}

Here we provide some details regarding the proof of Lemma~\ref{lemma for new toroidal presentation}.
Table~\ref{sequences} contains example sequences $(i_{1},i_{2},\dots,i_{h-1})$ in $I_{0}$ and $(\epsilon_{1},\dots,\epsilon_{h-2})$ in $\mathbb{Q}_{\leq 0}$ for $i_{1} = 1,2$, which allow us to write expressions for $\xhpm_{0,0}$ and $\hat{k}_{0}^{\pm 1}$.
\begin{table}
\centering
\begin{tabular}{|c|c|c|}
\hline
Type $A_{2}$ & Type $C_{2}$ & Type $G_{2}$ \\
\hline
& & \\[-11pt]
\hline
$\alpha_{1} \xrightarrow{-1} \alpha_{2}$
&
$\alpha_{1} \xrightarrow{-1} \alpha_{2} \xrightarrow{0} \alpha_{1}$
&
$\alpha_{1} \xrightarrow{-1} \alpha_{2} \xrightarrow{-1/3} \alpha_{2} \xrightarrow{0} \alpha_{1} \xrightarrow{-2/3} \alpha_{2}$ \\
$\alpha_{2} \xrightarrow{-1} \alpha_{1}$
&
$\alpha_{2} \xrightarrow{-1} \alpha_{1} \xrightarrow{0} \alpha_{1}$
&
$\alpha_{2} \xrightarrow{-1} \alpha_{1} \xrightarrow{-1/3} \alpha_{2} \xrightarrow{0} \alpha_{1} \xrightarrow{-2/3} \alpha_{2}$ \\
\hline
\end{tabular}
\caption{\hspace{.5em}Sequences $\alpha_{i_{1}} \xrightarrow{\epsilon_{1}} \cdots \xrightarrow{\epsilon_{h-2}} \alpha_{i_{h-1}}$ for $i_{1} = 1,2$ in types $A_{2}$, $C_{2}$ and $G_{2}$}\label{sequences}
\end{table}
\\

We first need to confirm that the Drinfeld-Jimbo relations of $U_{q}(X^{(1)})$ involving $\xpm_{0}$ and $k_{0}^{\pm 1}$ are preserved under the map $\xi : U_{q}(X^{(1)}) \rightarrow \A_{X}$ that sends $\xpm_{i} \mapsto \hat{x}^{\pm}_{i,0}$ and $t_{i}^{\pm 1} \mapsto \hat{k}_{i}^{\pm 1}$ for $i = 0,1,2$.
We outline a method for each below, referencing which relations \ref{AX 1}-\ref{AX 11} in $\A_{X}$ should be applied at each stage.
The author notes that many of the relations are easily checked with the help of a computer algebra package such as Magma.

\begin{itemize}[leftmargin=0pt]
\item Both $\kh_{0}^{\pm 1}\kh_{0}^{\mp 1} = 1$ and $[\kh_{0},\kh_{j}] = 0$ are trivial by \ref{AX 1}-\ref{AX 3}.
\item All $\kh_{0}\xhpm_{\ell,0}\kh_{0}^{-1} = q_{0}^{\pm a_{0\ell}}\xhpm_{\ell,0}$ and $\kh_{\ell}\xhpm_{0,0}\kh_{\ell}^{-1} = q_{0}^{\pm a_{\ell 0}}\xhpm_{0,0}$ are deduced from \ref{AX 1}-\ref{AX 4}.
\item The relation $[\xhp_{0,0},\xhm_{0,0}] = \frac{\kh_{0} - \kh_{0}^{-1}}{q_{0} - q_{0}^{-1}}$ is proved as follows:
\begin{enumerate}
    \item Input the expressions for $\xhpm_{0,0}$ with $i_{1} = 1$ and expand everything out.
    \item Factor the $\Ch^{\pm 1}$, $\kh_{1}^{\pm 1}$ and $\kh_{2}^{\pm 1}$ terms using \ref{AX 1} and \ref{AX 4}, then cancel them.
    \item Move any $\xhm_{i,m}$ terms in front of all $\xhp_{i,m}$ terms with \ref{AX 5}-\ref{AX 7}, then pull any $\Ch^{\pm 1}$, $\kh_{1}^{\pm 1}$ and $\kh_{2}^{\pm 1}$ factors created out to one side with \ref{AX 1} and \ref{AX 4}.
    \item Cancel all remaining summands other than $\frac{\kh_{0} - \kh_{0}^{-1}}{q_{0} - q_{0}^{-1}}$ using the relations \ref{AX 8} and \ref{AX 11} which involve $\xhpm_{1,0}$, $\xhpm_{1,\pm 1}$ and $\xhpm_{2,0}$.
\end{enumerate}
\item For $[\xhpm_{0,0},\xhmp_{\ell,0}] = 0$ with $\ell \in I_{0}$:
\begin{enumerate}
    \item Input the expression for $\xhpm_{0,0}$ with $i_{1} \not= \ell$ and expand everything out.
    \item Factor the $\Ch^{\pm 1}$, $\kh_{1}^{\pm 1}$ and $\kh_{2}^{\pm 1}$ terms using \ref{AX 1} and \ref{AX 4}.
    \item Cancel everything by \ref{AX 8} and \ref{AX 11} with $y_{i} = \xhmp_{\ell,0}$ and $y_{j} = \xhmp_{i_{1},0},\xhmp_{i_{1},\pm 1}$.
\end{enumerate}
\item For the quantum Serre relations between $\xhpm_{0,0}$ and $\xhpm_{\ell,0}$ with $\ell \in I_{0}$:
\begin{enumerate}
    \item Input the expression for $\xhpm_{0,0}$ with $i_{1} \not= \ell$ and expand everything out.
    \item Factor the $\Ch^{\pm 1}$, $\kh_{1}^{\pm 1}$ and $\kh_{2}^{\pm 1}$ terms using \ref{AX 1} and \ref{AX 4}.
    \item Move all $\xhpm_{\ell,0}$ terms to one side using relations \ref{AX 5} and \ref{AX 7}.
    \item Pull any $\kh_{\ell}^{\pm 1}$ factors created in the previous step out to one side with \ref{AX 4}.
    \item Cancel everything by \ref{AX 8}.
\end{enumerate}
\end{itemize}

Next we must give $\xpm_{1,\mp 1}$ and $\xpm_{2,\mp 1}$ in terms of the Drinfeld-Jimbo generators of $U_{q}(X^{(1)})$.
This can be done using Beck's extended affine braid group action from Theorem~\ref{Beck's affine result}.
In particular, writing each $X_{\omega_{i}^{\vee}}$ in the Coxeter presentation of $\Bd$ allows us to present its action with respect to the Drinfeld-Jimbo realization of $U_{q}(X^{(1)})$.
Then since $o(i)X_{\omega_{i}^{\vee}}$ sends $\xpm_{i,0}$ to $\xpm_{i,\mp 1}$ we can obtain the desired expressions, thus allowing us to find the images of $\xpm_{1,\mp 1}$ and $\xpm_{2,\mp 1}$ under $\xi$.
To complete the proof we check that these are equal to $\hat{x}^{\pm}_{1,\mp 1}$ and $\hat{x}^{\pm}_{2,\mp 1}$ respectively
by inserting the definitions of $\xhpm_{0,0}$ and $\kh_{0}^{\pm 1}$ in terms of the generators of $\A_{X}$ and applying the relevant relations.

\pagebreak
\addcontentsline{toc}{section}{References}

\begin{bibsection}
\begin{biblist}

\bib{Beck94}{article}{
    title={Braid group action and quantum affine algebras},
    author={J. Beck},
    journal={Commun. Math. Phys.},
    volume={165},
    number={3},
    pages={555--568},
    year={1994},
    note={\url{https://doi.org/10.1007/BF02099423}},
}

\bib{CP95}{article}{
    title={Quantum affine algebras and their representations},
    author={V. Chari},
    author={A. Pressley},
    conference={
    title={``Representations of Groups'',  CMS Conf. Proc.},
    date={1994},
    address={Banff, AB},
    },
    book={
    address={Providence, RI},
    volume={16},
    publisher={American Mathematical Society},
    date={1995},
    },
    pages={59--78},
    note={\url{https://doi.org/10.48550/arXiv.hep-th/9411145}},
}

\bib{Damiani12}{article}{
    title={Drinfeld Realization of Affine Quantum Algebras: The Relations},
    author={I. Damiani},
    journal={Publ. Res. Inst. Math. Sci.},
    volume={48},
    number={3},
    pages={661--733},
    year={2012},
    note={\url{https://doi.org/10.2977/prims/86}},
}

\bib{Damiani15}{article}{
    title={From the Drinfeld realization to the Drinfeld–Jimbo presentation of affine quantum algebras: injectivity},
    author={I. Damiani},
    journal={Publ. Res. Inst. Math. Sci.},
    volume={51},
    number={1},
    pages={131--171},
    year={2015},
    note={\url{https://doi.org/10.4171/prims/150}},
}

\bib{Drinfeld88}{article}{
    title={A new realization of Yangians and quantized affine algebras},
    author={V. G. Drinfeld},
    journal={Sov. Math. Dokl.},
    volume={36},
    pages={212--216},
    year={1988},
    note={\url{https://doi.org/10.1016/0393-0440(93)90070-U}},
}

\bib{FJ88}{article}{
    title={Vertex representations of quantum affine algebras},
    author={I. Frenkel},
    author={N. Jing},
    journal={Proc. Natl. Acad. Sci. U.S.A.},
    volume={85},
    number={24},
    pages={9373--9377},
    year={1988},
    note={\url{https://doi.org/10.1073/pnas.85.24.9373}},
}

\bib{GKV95}{article}{
    title={Langlands reciprocity for algebraic surfaces},
    author={V. Ginzburg},
    author={M. Kapranov},
    author={E. Vasserot},
    journal={Math. Res. Lett.},
    volume={2},
    number={2},
    pages={147--160},
    year={1995},
    note={\url{https://doi.org/10.4310/MRL.1995.v2.n2.a4}},
}

\bib{Hernandez05}{article}{
    title={Representations of quantum affinizations and fusion product},
    author={D. Hernandez},
    journal={Transform. Groups},
    volume={10},
    number={2},
    pages={163--200},
    year={2005},
    note={\url{https://doi.org/10.1007/s00031-005-1005-9}},
}

\bib{Hernandez09}{article}{
    title={Quantum toroidal algebras and their representations},
    author={D. Hernandez},
    journal={Sel. Math. New Ser.},
    volume={14},
    number={3},
    pages={701--725},
    year={2009},
    note={\url{https://doi.org/10.1007/s00029-009-0502-4}},
}

\bib{Ion03}{article}{
    title={Involutions of double affine Hecke algebras},
    author={B. Ion},
    journal={Compos. Math.},
    volume={139},
    number={1},
    pages={67--84},
    year={2003},
    note={\url{https://doi.org/10.1023/B:COMP.0000005078.39268.8d}},
}

\bib{IS20}{book}{
	title={Double affine Hecke algebras and congruence groups},
	author={B. Ion},
        author={S. Sahi},
	year={2020},
	publisher={American Mathematical Society},
        volume={268},
        number={1305},
        series={Memoirs},
        note={\url{https://doi.org/10.1090/memo/1305}},
}

\bib{Jing90}{article}{
    title={Twisted vertex representations of quantum affine algebras},
    author={N. Jing},
    journal={Invent. Math.},
    volume={102},
    number={3},
    pages={663--690},
    year={1990},
    note={\url{https://doi.org/10.1063/1.5023790}},
}

\bib{Jing98}{article}{
    title={On Drinfeld realization of quantum affine algebras},
    author={N. Jing},
    conference={
    title={The Monster and Lie Algebras: Proceedings of a Special Research Quarter at the Ohio State University},
    date={1996},
    },
    book={
    address={Berlin, New York},
    volume={7},
    publisher={De Gruyter},
    date={1998},
    },
    pages={195--206},
    note={\url{https://doi.org/10.1515/9783110801897.195}},
}

\bib{Jing98(2)}{article}{
    title={Quantum Kac-Moody Algebras and Vertex Representations},
    author={N. Jing},
    journal={Lett. Math. Phys.},
    address={Berlin, New York: De Gruyter},
    volume={44},
    number={4},
    pages={261–-271},
    year={1998},
    note={\url{https://doi.org/10.1023/A:1007493921464}},
}

\bib{JZ07}{article}{
    title={Drinfeld Realization of Twisted Quantum Affine Algebras},
    author={N. Jing},
    author={H. Zhang},
    journal={Commun. Algebra},
    volume={35},
    number={11},
    pages={3683--3698},
    year={2007},
    note={\url{https://doi.org/10.1080/00927870701404713}},
}

\bib{JZ10}{article}{
    title={Addendum to “Drinfeld Realization of Twisted Quantum Affine Algebras”},
    author={N. Jing},
    author={H. Zhang},
    journal={Commun. Algebra},
    volume={38},
    number={9},
    pages={3484--3488},
    year={2010},
    note={\url{https://doi.org/10.1080/00927870902933213}},
}

\bib{Kac90}{book}{
	title={Infinite-dimensional Lie algebras},
	author={V. Kac},
	year={1990},
	publisher={Cambridge University Press},
        note={\url{https://doi.org/10.1017/CBO9780511626234}},
}

\bib{KMPY96}{article}{
    title={Perfect crystals and q-deformed Fock spaces},
    author={M. Kashiwara},
    author={T. Miwa},
    author={J.-U. H. Petersen},
    author={C. M. Yung},
    journal={Sel. Math. New Ser.},
    volume={2},
    number={3},
    pages={415--499},
    year={1996},
    note={\url{https://doi.org/10.1007/BF01587950}},
}

\bib{Lusztig93}{book}{
	title={Introduction to quantum groups},
	author={G. Lusztig},
	year={1993},
	publisher={Birkhäuser Boston},
        series={Progress in Mathematics no. 110},
        note={\url{https://doi.org/10.1007/978-0-8176-4717-9}},
}

\bib{Mac03}{book}{
    title={Affine Hecke algebras and orthogonal polynomials},
    author={I. G. Macdonald},
    year={2003},
    publisher={Cambridge University Press},
    series={Cambridge Tracts in Mathematics no. 157},
    note={\url{https://doi.org/10.1017/CBO9780511542824}},
}

\bib{Miki99}{article}{
    title={Toroidal braid group action and an automorphism of toroidal algebra $U_{q}(\mathfrak{sl}_{n+1,tor})$ $(n\geq 2)$},
    author={K. Miki},
    journal={Lett. Math. Phys.},
    volume={47},
    number={4},
    pages={365--378},
    year={1999},
    note={\url{https://doi.org/10.1023/A:1007556926350}},
}

\bib{Miki00}{article}{
    title={Representations of quantum toroidal algebra $U_{q}(\mathfrak{sl}_{n+1,tor})$ $(n\geq 2)$},
    author={K. Miki},
    journal={J. Math. Phys.},
    volume={41},
    number={10},
    pages={7079--7098},
    year={2000},
    note={\url{https://doi.org/10.1063/1.1287436}},
}

\bib{Mounzer22}{thesis}{
    title={Quantum Toroidal Algebras and their Representation Theory},
    author={E. Mounzer},
    organization={Université Paris-Saclay},
    type={Ph.D. Thesis},
    eprint={https://theses.hal.science/tel-03617958},
    year={2022},
}

\bib{Nag09a}{article}{
    title={Quiver varieties and Frenkel–Kac construction},
    author={K. Nagao},
    journal={J. Algebra},
    volume={321},
    number={12},
    pages={3764--3789},
    year={2009},
    note={\url{https://doi.org/10.1016/j.jalgebra.2009.03.012}},
}

\bib{Nag09b}{article}{
    title={K-theory of quiver varieties, q-Fock space and nonsymmetric Macdonald polynomials},
    author={K. Nagao},
    journal={Osaka J. Math.},
    volume={46},
    number={3},
    pages={877--907},
    year={2009},
    note={\url{https://doi.org/10.18910/9634}},
}

\bib{Nak01}{article}{
    title={Quiver varieties and finite-dimensional representations of quantum affine algebras},
    author={H. Nakajima},
    journal={J. Am. Math. Soc.},
    volume={14},
    number={1},
    pages={145--238},
    year={2001},
    note={\url{https://doi.org/10.1090/S0894-0347-00-00353-2}},
}

\bib{Nak02}{article}{
    title={Geometric construction of representations of affine algebras},
    author={H. Nakajima},
    pages={423--438},
    conference={
    title={Proceedings of the International Congress of Mathematicians},
    date={2002},
    address={Beijing},
    },
    book={
    address={Beijing},
    volume={1},
    publisher={Higher Ed. Press},
    date={2002},
    },
    note={\url{https://doi.org/10.48550/arXiv.math/0212401}},
}

\bib{Saito98}{article}{
    title={Quantum toroidal algebras and their vertex representations},
    author={Y. Saito},
    journal={Publ. Res. Inst. Math. Sci.},
    volume={34},
    number={2},
    pages={155--177},
    year={1998},
    note={\url{https://doi.org/10.2977/prims/1195144759}},
}

\bib{STU98}{article}{
    title={Toroidal actions on level 1 modules of $U_q(\hat{\mathfrak{sl}}_{n})$},
    author={Y. Saito},
    author={K. Takemura},
    author={D. Uglov},
    journal={Transform. Groups},
    volume={3},
    number={1},
    pages={75--102},
    year={1998},
    note={\url{https://doi.org/10.1007/BF01237841}},
}

\bib{Tsymbaliuk19}{article}{
    title={Several realizations of Fock modules for toroidal $\Ddot{U}_{q,d}(\mathfrak{sl}_{n})$},
    author={A. Tsymbaliuk},
    journal={Algebr. Represent. Theory},
    volume={22},
    pages={177--209},
    year={2019},
    note={\url{https://doi.org/10.1007/s10468-017-9761-5}},
}

\bib{VV96}{article}{
    title={Schur duality in the toroidal setting},
    author={M. Varagnolo},
    author={E. Vasserot},
    journal={Commun. Math. Phys.},
    volume={182},
    number={2},
    pages={469--483},
    year={1996},
    note={\url{https://doi.org/10.1007/BF02517898}},
}

\bib{VV98}{article}{
    title={Double-loop Algebras and the Fock Space},
    author={M. Varagnolo},
    author={E. Vasserot},
    journal={Invent. Math.},
    volume={133},
    number={1},
    pages={133--159},
    year={1998},
    note={\url{https://doi.org/10.1007/s002220050242}},
}

\bib{VV99}{article}{
    title={On the K-theory of the cyclic quiver variety},
    author={M. Varagnolo},
    author={E. Vasserot},
    journal={Int. Math. Res. Not.},
    year={1999},
    number={18},
    pages={1005--1028},
    note={\url{https://doi.org/10.1155/S1073792899000525}},
}

\end{biblist}
\end{bibsection}

\end{document}